\providecommand{\U}[1]{\protect\rule{.1in}{.1in}}
\newcommand{\bluecomment}[1]{\ifthenelse{\boolean{bluecomments}}{\textcolor{blue}{#1}}{}}
\newcommand{\redcomment}[1]{\ifthenelse{\boolean{redcomments}}{\textcolor{red}{#1}}{}}
\newcounter{mycounter}
\numberwithin{mycounter}{section}
\newtheorem{theorem}[mycounter]{Theorem}
\theoremstyle{plain}
\newtheorem{claim}[mycounter]{Claim}
\newtheorem{lemma}[mycounter]{Lemma}
\newtheorem{proposition}[mycounter]{Proposition}
\numberwithin{equation}{section}
\theoremstyle{definition}
\newtheorem{remark}[mycounter]{Remark}
\begin{document}
\title[A Geometric flow towards hamiltonian stationary submanifolds]{A Geometric flow towards hamiltonian stationary submanifolds}
\author{Jingyi Chen and Micah Warren}
\address{Department of Mathematics\\
The University of British Columbia, Vancouver, BC V6T 1Z2}
\email{jychen@math.ubc.ca}
\address{
Department of Mathematics\\
University of Oregon, Eugene, OR 97403}
\email{micahw@uoregon.edu}
\thanks{Chen is partially supported by NSERC Discovery Grant GR010074}
\maketitle

\begin{abstract}
In this paper, we consider a geometric flow for Lagrangian submanifolds in an almost
K\"ahler manifold that stays in its initial Hamiltonian isotopy class and is a
gradient flow for volume. The stationary solutions are the Hamiltonian
stationary Lagrangian submanifolds. The flow is not strictly parabolic but it
corresponds to a fourth order strictly parabolic scalar equation in the
cotangent bundle of the submanifold via Weinstein's Lagrangian neighborhood
theorem. For any compact initial Lagrangian immersion, we establish short-time
existence, uniqueness, and higher order estimates when the second fundamental
forms are uniformly bounded up to time $T$.

\end{abstract}

\section{Introduction}

The objective of this paper is to study a fourth order flow of Lagrangian
submanifolds in an almost K\"ahler manifold as a gradient flow of volume within a
Hamiltonian isotopy class and establish basic properties such as short-time
existence, uniqueness, and extendibility with bounded second fundamental form.

\vspace{0.1cm}Our setting includes an almost K\"{a}hler manifold $(M^{2n},h,\omega,J)$
with symplectic form $\omega$ and compatible {Riemannian} metric $h$ satisfying
$h(JV,W)=\omega(V,W)$ where $J$ is an almost complex structure, and a given compact
Lagrangian immersion $\iota:L^{n}\rightarrow M^{2n}.$ We propose to find
$F:L\times\lbrack0,T)\rightarrow M^{2n}$ satisfying
\begin{align}
&  \frac{dF}{dt}=J\nabla\operatorname{div}\left(  JH\right) \label{flow0}\\
&  F\left(  \cdot,0\right)  =\iota\left(  \cdot\right)  \label{flow0b}%
\end{align}
where $H$ is the mean curvature vector of $L_{t}=F(\cdot,t)$ in $M$ and
$\nabla,\operatorname{div}$ are along $L_{t}$ in the induced metric from $h$.

The stationary solutions of the above evolution equation are the so-called
Hamiltonian stationary Lagrangian submanifolds, which are fourth order
generalizations of the special Lagrangians, and exist in more abundance. Their
significance lies in the fact that they are critical points of the volume
functional among Lagrangian submanifolds in the same Hamiltonian isotopy
class, in particular include minimal Lagrangian ones which are of great
geometric interest with applications in mirror symmetry. There are independent
reasons to be interested in Hamiltonian stationary Lagrangian submanifolds:
When the Hamiltonian class is zero-Maslov, it is possible for the Hamiltonian
stationary submanifold to be absolutely volume minimizing, hence special
Lagrangian in a Calabi-Yau manifold.
On the other hand, compact Hamiltonian stationary submanifolds with
non-trivial Maslov class (e.g. Clifford torus) are interesting geometric
objects on their own right, and are studied in relation to the Willmore
Conjecture in dimension 4 \cite{Minicozzi} and Oh's Conjecture \cite{Oh1993}.

It is interesting to compare the extensively studied mean curvature flow (MCF)
with our new flow \eqref{flow0}. First, they are both (negative $L^{2}$)
gradient flows for volume, the difference is that the former is along smooth
vector fields and latter is along Hamiltonian vector fields. Both flows share
the same extension property: if the second fundamental form is bounded up to a
time $T<\infty$ then the flow extends with estimates of any order. Second, in
light of the fact that there are no compact minimal submanifolds in
$\mathbb{C}^{n}$
and that it is not always possible to find minimal Lagrangians even in the
integral Lagrangian homology class \cite{MicallefWolfson}, it is feasible that
the flow \eqref{flow0} might be more robust (say avoiding singularities) than
MCF.

Recall that two manifolds are \textit{Hamiltonian isotopic} if they can be
joined by
the flow generated by a vector field of the form $J\nabla f$ for a function
$f$.

\vspace{.1cm}

We will show that the initial value problem \eqref{flow0} - \eqref{flow0b}:

\begin{enumerate}
\item stays within the Hamiltonian isotopy class - that is, the flow is
generated by a vector field of the form $J\nabla f,$

\item is the gradient flow of volume with respect to an appropriate metric, so
decreases volume along the flow,

\item enjoys short time existence given smooth initial conditions, and

\item continues to exist as long as a second fundamental form bound is satisfied.
\end{enumerate}

The flow \eqref{flow0} is degenerate parabolic but not strictly parabolic. Our
proof of existence and uniqueness involves constructing global solutions
(locally in time) using Weinstein's Lagrangian neighbourhood theorem, which
results in a nice fourth order parabolic \textit{scalar} equation. This
equation has a good structure, satisfying conditions required in \cite{MM} so
we can conclude uniqueness and existence within a given Lagrangian
neighbourhood. We then argue that the flows described by solutions to the
scalar fourth order equations are in correspondence with the normal flows of
the form (\ref{flow0}) leading to uniqueness and extendability provided the
flow remains smooth.

After proving existence and uniqueness using Weinstein neighborhoods, we turn
to local Darboux charts to prove higher regularity from second fundamental
form bounds. It's not immediately clear how to extract regularity from
arbitrary Weinstein neighbourhoods as the submanifolds move, but using a
description of Darboux charts in \cite{JLS} we can fix a finite set of charts
and perform the regularity theory in these charts. Unlike mean curvature flow,
there is no maximum principle for fourth order equations. We deliver a
regularity theory using Sobolev spaces. Smoczyk \cite{Smoczyk99} has shown the
basic fact that mean curvature flow preserves the Lagrangian property in a K\"ahler-Einstein manifold, by a
maximum principle argument, while our flow achieves this by evolving within a
Hamiltonian class  in an almost K\"ahler manifold.


\vspace{.1cm}

The newly introduced flow already exhibits nice features in special cases:

\noindent(1) For Calabi-Yau manifolds, Harvey and Lawson \cite{HarveyLawson82}
showed that, for a Lagrangian submanifold, the Lagrangian angle $\theta$
generates the mean curvature via the striking relation
\[
H=J\nabla\theta.
\]
In this case (\ref{flow0}) becomes
\[
\frac{dF}{dt}=-J\nabla\Delta\theta, \label{flow}
\]
while $\theta$ satisfies the pleasant fourth order parabolic equation
\[
\frac{d\theta}{dt}=-\Delta_{g}^{2}\theta
\]
where $g$ is the induced metric on $L_{t}$.

\medskip

\noindent(2) The case $n=1$ involves curves in $\mathbb{C}$. Hamiltonian
isotopy classes bound a common signed area. Higher order curvature flows have
been studied and are called polyharmonic heat flow or curve diffusion flow,
these deal with evolution in the form $\gamma^{\prime}=(-1)^{ p-1}%
\kappa_{s^{2p}}$ where $\kappa_{s^{2p}}$ is the $2p$-th order derivative of
the curvature $\kappa$ of a plane curve $\gamma$ with respect to the arclength
$s$. For $p=0$ it is the standard curve shortening flow. The flow discussed
here corresponds to $p=1$, namely the curve diffusion flow. It appears this
flow was first described in \cite[Chapter 3]{Polden} as the ``obvious
fourth-order analogue" to curve shortening flow, where a gradient nature of
the flow is suggested and a simple argument was given that immersed figure 8
curves must have finite time singularities. In \cite{Fife}, the gradient flow
structure was explored in relation to Cahn-Hilliard equations. In subsequent
works \cite{DKS}, \cite{Chou}, \cite{EscherIto}, \cite{Wheeler},
\cite{ParkinsWheeler} and others, time-to-singularity estimates were given, as
well as conditions for stability when the initial curve is near the circle.
Unlike the curve shortening flow, there are self-similar figure 8 curves which
can be explicitly described \cite{figure8}. It remains an open question
whether an embedded closed curve with winding number $1$ can form a
singularity. It is known that loss of convexity, embeddedness, and
graphicality \cite{GigaIto99}, \cite{GigaIto98}, \cite{graphicality} can all occur.

\begin{remark} Recently, we became aware of the paper \cite{Sch2013} by Lars Sch\"afer, in which the flow  \eqref{flow0} is introduced, and an argument for short-time existence and uniqueness for the K\"ahler-Einstein setting is given.  
\end{remark}

\section{Gradient Flow}

We begin by setting up the equation as a formal gradient flow over an $L^{2}$
metric space. Given an embedded Lagrangian submanifold $L^{n}\subset M^{2n}$,
we can consider Hamiltonian deformations of $L$, these will be flows of vector
fields $J\nabla f$ for scalar functions $f$ on $M$. At $x\in L$, the normal
component of $J\nabla f$ is given by $J\nabla_{L}f$ where $\nabla_{L}f$ is the
gradient of $f$ as a function restricted to $L$; conversely, given any smooth
function $f$ on an embedded $L,$ we can extend $f$ to a function on $M$ so
that $\nabla_{L}f$ is not changed and is independent of the extension of $f$
(see section \ref{related def} below).

In other words, given a family of $C^{1}$ functions $f\left(  \cdot,t\right)
$ along $L,$ one can construct a family of embeddings
\begin{equation}
F:L\times\left(  -\varepsilon,\varepsilon\right)  \rightarrow M \label{path1}%
\end{equation}
satisfying
\begin{equation}
\frac{d}{dt}F(x,t)=J\nabla f\left(  x,t\right)  \label{deform}%
\end{equation}
and conversely, given any path (\ref{path1}) within a Hamiltonian isotopy
class, there will be a function $f$ so that (possibly after a diffeomorphism
to ensure the deformation vector is normal) the condition (\ref{deform})\ is satisfied.

Let $\mathcal{I}_{L_{0}}$ be the set of smooth manifolds that are Hamiltonian
isotopic to $L_{0}$. The (smooth) tangent space at any $L\in\mathcal{I}%
_{L_{0}}$ is parameterized via
\begin{equation}
T_{L}\mathcal{I}_{L_{0}}\mathcal{=}\left\{  f\in C^{\infty}(L): \int%
_{L}fdV_{g}=0\right\}  . \label{tangentspaceH}%
\end{equation}
We use the $L^{2}$ metric on $T_{L}\mathcal{I}_{L_{0}}$: For $JX_{i}=\nabla
f_{i}$ $\in T_{L}\mathcal{I}_{L_{0}}, i=1,2$
\begin{equation}
\langle X_{1},X_{2}\rangle=\int_{L}f_{1}f_{2}\,dV_{g}. \label{h1met}%
\end{equation}
With volume function given by
\[
\operatorname{Vol}(L)=\int_{L}dV_{g}
\]
the classical first variation formula gives
\[
d\operatorname{Vol}(L)(W)=-\int_{L}\langle W,H\rangle dV_{g}
\]
where $W$ is the deformation vector field and $H$ is the mean curvature
vector. In the situation where allowable deformation vectors are of the form
$W=J\nabla f,$ we get
\begin{align*}
d\operatorname{Vol}(L)(f)  &  =-\int_{L}\langle J\nabla f,H\rangle dV_{g}\\
&  =\int_{L}\langle\nabla f,JH\rangle dV_{g}\\
&  =-\int_{L}f\operatorname{div}\left(  JH\right)  dV_{g}%
\end{align*}
using the fact that $J$ is orthogonal, then integrating by parts. Note that
$-\operatorname{div}\left(  JH\right)  $ belongs to $T_{L}\mathcal{I}_{L_{0}}$
since it integrates to 0 on $L$. Therefore, it is the gradient of the volume
function with respect to the metric. Thus a (volume decreasing) gradient flow
for volume would be a path satisfying
\begin{equation}
\frac{dF}{dt}=J\nabla\operatorname{div}\left(  JH\right)  . \label{flow1}%
\end{equation}

\begin{remark}
The metric \eqref{tangentspaceH} is not the usual $L^{2}$ metric for
deformations of a submanifold, which would measure the length of the tangent
vector by $\int\left\vert J\nabla f\right\vert ^{2}dV_{g}. $
It is better suited than the standard metric on vector fields. Suppose instead
we take the \textquotedblleft standard" $L^{2}$ metric on deformation fields:%
\[
d\operatorname{Vol}(L)(J\nabla f)=-\int_{L}\langle J\nabla f,H\rangle dV_{g}%
\]
The gradient with respect to this metric would be $J\nabla\eta$ for some
$\eta\in C^{\infty}(L)$ such that
\[
-\int_{L}\langle J\nabla f,H\rangle=\int_{L}\langle J\nabla f,J\nabla
\eta\rangle.
\]
While the Lagrangian angle $\theta$ (in the Calabi-Yau case) does produce this
gradient locally, typically $\theta$ is not globally defined on $L$. So
instead, we must find a function $\eta$ solving the equation
\[
\Delta\eta=-\operatorname{div}(JH)
\]
which one can solve uniquely up to additive constants since $L$ is compact and
$\nabla\eta= -JH$ + $X$ (divergence free vector field on $L$). A gradient flow
would be $dF/dt = H -JX$ but there is no canonical way to determine $X$.

It is also worth noting that gradient flow with respect to the $L^{2}$ metric
(sometimes called $\mathcal{H}^{-1}$) is not new: It has been used for example
in mechanics to describe the flow of curves \cite{Fife} and was alluded to in
Polden's thesis \cite{Polden}. It bears similarity to a metric used to study
almost calibrated Lagrangian submanifolds (up to a factor of $\cos\theta$ on
the volume form) introducted by Solomon \cite{Solomon1}.
\end{remark}


\subsection{Related definitions of Hamiltonian deformations\label{related def}%
}

Traditionally, Hamiltonian isotopies are defined as flows of the entire
manifold along the direction of a time-dependent vector field $J\nabla f$ for
some $f$ a smooth function on $M$. Two submanifolds are Hamiltonian isotopic
if the one submanifold is transported to the other via the isotopy. In order
to use the description (\ref{tangentspaceH}), we note the following standard result:

\begin{lemma}
\label{both}For a smooth flow of embedded Lagrangian submanifolds satisfying
\begin{equation}
\frac{dF}{dt}=J\nabla f(\cdot,t)\text{ } \label{subh}%
\end{equation}
for some function $f$ defined on $L$ for each $t$, there exists a function
$\tilde{f}$ on $M\times[0,1]$ that defines a Hamiltonian isotopy on $M$ and
determines the same Hamiltonian isotopy of the submanifolds.

Conversely, given a global Hamiltonian isotopy determined by $\tilde{f}$, the
function $\tilde{f}$ restricted to $L_{t}$ determines a flow of the form
(\ref{subh}), possibly up to reparameterization by diffeomorphisms of $L.$
\end{lemma}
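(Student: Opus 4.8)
The plan is to prove the two directions separately, using the normal/tangential decomposition along $L_t$ and the fact that the tangential motion is pure reparameterization.

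For the first (harder) direction, suppose $F:L\times[0,1]\to M$ is a smooth flow of embedded Lagrangians with $\tfrac{dF}{dt}=J\nabla f(\cdot,t)$, where $\nabla f$ is taken along $L_t$. I would first observe that for each fixed $t$, the function $f(\cdot,t)$ on $L_t$ can be extended to a smooth function on a Weinstein neighborhood of $L_t$ and then cut off to a function $\tilde f(\cdot,t)$ on all of $M$, with the key property (remarked in section \ref{related def}) that the gradient $\nabla^M\tilde f$ restricted to $L_t$ has normal component exactly $J\nabla f(\cdot,t)$ along $L_t$: indeed $d\tilde f|_{TL_t} = df$, so $\nabla^M\tilde f = \nabla f + (\text{normal part})$, and since $L_t$ is Lagrangian, $J(\nabla f) \perp TL_t$, so $J\nabla^M\tilde f$ has $J\nabla f$ as its component normal to $L_t$. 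One must check this extension can be made smoothly in $t$ as well; this is routine once a fixed tubular neighborhood structure is available on a short subinterval, and one patches over $[0,1]$ by compactness. Now let $\Psi_t:M\to M$ be the Hamiltonian isotopy on $M$ generated by $\tilde f$, i.e. $\tfrac{d}{dt}\Psi_t = J\nabla^M\tilde f\circ\Psi_t$, $\Psi_0=\mathrm{id}$. Then $\Psi_t(L_0)$ and $L_t$ are two flows of submanifolds whose velocity vector fields agree \emph{in the normal direction} along the submanifold (the velocity of $\Psi_t(L_0)$ at a point of $L_t$ is $J\nabla^M\tilde f$, whose normal part is $J\nabla f$, which is the normal part of $\tfrac{dF}{dt}$; note $\tfrac{dF}{dt}=J\nabla f$ is already normal). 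Since two flows of an embedded submanifold agreeing in normal component differ only by a time-dependent reparameterization, $\Psi_t(L_0)=L_t$ as submanifolds, so $\tilde f$ determines the same Hamiltonian isotopy of the submanifolds. I expect the extension-with-$t$-dependence step to be the main obstacle, since one must keep the Weinstein neighborhood of $L_t$ varying smoothly and produce $\tilde f$ smooth jointly in $(x,t)$.

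For the converse, given a global Hamiltonian isotopy $\Psi_t$ generated by $\tilde f$ on $M$, set $L_t=\Psi_t(L_0)$ and $G(x,t)=\Psi_t(\iota(x))$, so $\tfrac{dG}{dt}=J\nabla^M\tilde f$. Decompose $J\nabla^M\tilde f = (J\nabla^M\tilde f)^\perp + (J\nabla^M\tilde f)^\top$ along $L_t$. By the Lagrangian condition again, $(J\nabla^M\tilde f)^\perp = J\nabla(\tilde f|_{L_t})$, i.e. the normal part is $J\nabla f(\cdot,t)$ with $f(\cdot,t):=\tilde f|_{L_t}$. The tangential part $(J\nabla^M\tilde f)^\top$ is a time-dependent vector field on $L_t$; pulling it back via $\Psi_t^{-1}$ (or the appropriate parameterization) gives a time-dependent vector field on $L_0$ whose flow is a family of diffeomorphisms $\varphi_t$ of $L_0$. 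Reparameterizing $F(x,t):=G(\varphi_t^{-1}(x),t)$ (or the analogous composition) removes the tangential part, and one computes $\tfrac{dF}{dt}=(J\nabla^M\tilde f)^\perp = J\nabla f(\cdot,t)$, which is (\ref{subh}). This direction is essentially bookkeeping with the ODE for $\varphi_t$ and the chain rule, with no real obstacle. Throughout, the two facts doing the work are: (i) on a Lagrangian $L$, for any function $f$ on $L$, $J\nabla f$ is normal to $L$ and depends only on $f|_L$; and (ii) normal velocity fields determine a flow of submanifolds uniquely up to reparameterization.
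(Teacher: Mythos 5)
Your proposal is correct in outline and proves both directions, but the first direction takes a genuinely different route from the paper and contains a gap worth flagging. The paper treats the trace $\{(F(x,t),t):x\in L\}$ as a single compact submanifold of $M\times[0,1]$ and applies the Whitney extension theorem once, \emph{choosing the extension so that the derivatives normal to $L_t$ in $M$ vanish}. This gives $\nabla^{M}\tilde f = \nabla_{L_t}f$ exactly along $L_t$, so each curve $t\mapsto F(x,t)$ is literally an integral curve of the globally defined vector field $J\nabla^{M}\tilde f$; pointwise ODE uniqueness then forces $\Psi_t(x)=F(x,t)$ and hence $\Psi_t(L_0)=L_t$ with no further argument. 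This also disposes of the $t$-smoothness issue you identify as the main obstacle: since the extension is done all at once on $M\times[0,1]$, smoothness in $t$ is built in, and there is no patching over subintervals.

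You instead build an extension satisfying only $d\tilde f|_{TL_t}=df$ (which any extension satisfies) and then compare the two flows via their normal velocity components. Here there is a gap: the statement that $J\nabla^{M}\tilde f$ along $\Psi_t(L_0)$ has normal component $J\nabla_{L_t}f$ only makes sense once you already know $\Psi_t(L_0)=L_t$, because otherwise the normal decompositions are taken with respect to two different submanifolds. Your appeal to ``two flows agreeing in normal component differ only by reparameterization'' is therefore circular unless you add an open-and-closed (connectedness) argument on $\{t:\Psi_s(L_0)=L_s,\ s\le t\}$, together with a short-time uniqueness result for submanifold flows driven by normal fields, which is more than Claim~\ref{moddif} gives (that claim reparametrizes a single given family, it does not compare two). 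All of this is avoidable: if you also demand, as the paper does, that your extension annihilate the normal derivative, the vector fields agree outright along $L_t$, and the comparison reduces to ODE uniqueness for curves in $M$. Your converse direction is correct and essentially identical to the paper's: decompose $\nabla^{M}\tilde f$ into tangential and normal parts, observe that $J$ swaps them by the Lagrangian condition, and absorb the tangential part of $J\nabla^{M}\tilde f$ into a reparameterization of $L$; you just spell out the reparametrizing flow more explicitly.
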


\begin{proof}
The function $f$ is defined on a smooth compact submanifold of $M\times
\lbrack0,1]$. We can use the Whitney extension theorem to extend a smooth
function off this set in which the normal derivatives vanish. Thus along the
Lagrangian submanifolds, $J\nabla f=J\nabla\tilde{f}$.

Conversely, given any function $\tilde{f}$ its gradient decomposes into the
normal and tangential parts on the Lagrangian submanifold. By the Lagrangian
condition, $J\nabla^{T}\tilde{f}$ is normal and $J\nabla^{\perp}\tilde{f}$ is
tangential with the latter component describing merely a reparameterization of
$L$. So the flow is completely determined by the component $J\nabla^{T}%
\tilde{f}$ which is determined by the restriction to $L$.
\end{proof}

\subsection{Immersed Lagrangian submanifolds and their Hamiltonian
deformations}

\label{section}

Along the evolution equation (\ref{flow0}), it is feasible that a submanifold
which is initially embedded will become merely immersed. Thus we would like
the equation to behave well even when the submanifold is immersed.

Weinstein's Lagrangian neighborhood Theorem for immersed Lagrangian
submanifolds \cite[Theorem 9.3.2]{EM} states that any Lagrangian immersion
$F_{0}: L\to M$ extends to an immersion $\Psi$ from a neighborhood of the
0-section in $T^{*}L$ to $M$ with $\Psi^{*}\omega_{M}=\omega_{\text{can}}$.

Sections of the cotangent bundle $T^{*}L$ are clearly embedded as graphs over
the 0-section of $T^{*}L$ which is identified with $L$, so by factoring the
immersion through $T^{*}L$, we get immersed submanifolds in $M$, in
particular, immersed Lagrangian submanifolds in $M$ for sections defined by
closed 1-forms on $L$.

Even though the deformation of an immersed manifold is not properly
Hamiltonian (that is, velocity vector $J\nabla f$ determined by a global
function $f$ on $M$) one can define deformations by using a function $f$
defined on the submanifold, and $J\nabla f$ makes sense within $T^{*}L$ as
pullback by the immersion. For example, the figure 8 is not problematic
because the two components of a neighborhood of the crossing point can have
different velocity vectors; these are separated within the cotangent bundle.


\subsection{The evolution equation in terms of $\theta$}

By \cite{HarveyLawson82}, for a Lagrangian submanifold $L$ in a Calabi-Yau
manifold $(M^{n}, \omega, J, \Omega)$ with a covariant constant holomorphic
$n$-form $\Omega$, the mean curvature of $L$ satisfies $H=J\nabla\theta$
where
\[
\Omega|_{L}=e^{i\theta}d\operatorname{Vol}_{L}.
\]
Now \eqref{flow1} leads to
\[
\frac{dF}{dt}=J\nabla\operatorname{div}\left(  JH\right)  =J\nabla
\operatorname{div}\left(  JJ\nabla\theta\right)  =-J\nabla\Delta\theta.
\]
Differentiating the left-hand side (cf. \cite[Prop 3.2.1]{WoodThesis}):%

\begin{align*}
\frac{d}{dt}\Omega|_{L}  &  =\frac{d}{dt}\left(  F_{t}^{\ast}\Omega\right)
=F_{0}^{\ast}\mathcal{L}_{-J\nabla\Delta\theta}\Omega\\
&  =F_{0}^{\ast}d\left(  \iota_{-J\nabla\Delta\theta}\Omega\right)  =d\left(
F_{0}^{\ast}\left(  \iota_{-J\nabla\Delta\theta}\Omega\right)  \right) \\
&  =d\left(  F_{0}^{\ast}i\left(  \iota_{-\nabla\Delta\theta}\Omega\right)
\right) \\
&  =d\left(  ie^{i\theta}dVol_{L}(-\nabla\Delta\theta,\cdot,...,\cdot)\right)
\\
&  =-d\left(  ie^{i\theta}\ast d\Delta\theta\right) \\
&  =\left(  e^{i\theta}\ast d\Delta\theta-ie^{i\theta}d\left(  \ast
d\Delta\theta\right)  \right)  .
\end{align*}
Then differentiating the right hand side:
\[
\frac{d}{dt}e^{i\theta}d\operatorname{Vol}_{L}=e^{i\theta}\frac{d}%
{dt}d\operatorname{Vol}_{L}+ie^{i\theta}\frac{d\theta}{dt}d\operatorname{Vol}%
_{L}.
\]
Comparing the imaginary parts (after multiplying by $e^{-i\theta}$) of the
above two gives
\begin{equation}
\frac{d\theta}{dt}=-\Delta_{g(t)}^{2}\theta. \label{bilaplace}%
\end{equation}



\section{Existence and Uniqueness via a scalar equation on a Lagrangian
Neighborhood\label{LagN}}

The system of equations (\ref{flow0}) is not strictly parabolic as given. Our
approach is to make good use of the Lagrangian property, in particular, by
setting up the equation as a scalar, uniformly parabolic equation via
Weinstein's Lagrangian neighborhood theorem. For the convenience of using
common terminologies, we make our discussion for embeddings but the
conclusions hold for immersions in view of subsection \ref{section}.

\subsection{Accompanying flow of scalar functions\label{existence}}

Let $L$ be an embedded compact Lagrangian submanifold in a symplectic manifold
$(M,\omega)$. By Weinstein's Lagrangian neighborhood \cite[Corollary
6.2]{Weinstein} theorem, there is a diffeomorphism $\Psi$ from a neighborhood
$U\subset T^{*}L$ of the 0-section (identified with $L$) to a neighborhood
$V\subset M$ of $L$ such that $\Psi^{\ast}\omega=d\lambda_{can}$ and $\Psi$
restricts to the identity map on $L$.

Let $\varphi(x,t)$ be a smooth function on $L\times\lbrack0,\delta)$ with
$\varphi(\cdot,0)=0$. Then $d\varphi$ is a $t$-family of exact (hence closed)
1-forms on $L$ hence a family of sections of $T^{\ast}L$ and each is a graph
over the 0-section. The symplectomorphism $\Psi$ yields a $t$-family of
Lagrangian submanifolds $L_{t}$ in $M$ near $L$:
\begin{equation}
F=\Psi(x,d\varphi(x,t))=\Psi\left(  x,\frac{\partial\varphi}{\partial x^{k}%
}dx^{k}\right)  . \label{F}%
\end{equation}

\begin{proposition}
\label{flow equation}Suppose that $d\varphi(x,t)$ is an exact section
describing an evolution of Lagrangian submanifolds which satisfy the equation
\begin{equation}
\left(  \frac{dF}{dt}\right)  ^{\perp}=J\nabla\operatorname{div}\left(
JH\right)  . \label{perpMM}%
\end{equation}
Then there is a function $G$ (depending on $\Psi$) such that $\varphi$
satisfies
\[
\frac{\partial\varphi}{\partial t}=-g^{ap}g^{ij}\frac{\partial^{4}\varphi
}{\partial x^{a}\partial x^{j}\partial x^{i}\partial x^{p}}+G(x,D\varphi
,D^{2}\varphi,D^{3}\varphi).
\]
The coordinate free expression is
\[
\frac{\partial\varphi}{\partial t} = \operatorname{div}J\Delta_{g}%
\Psi(x,d\varphi).
\]

\end{proposition}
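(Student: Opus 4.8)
The plan is to compute the normal velocity of the flow $F(x,t) = \Psi(x, d\varphi(x,t))$ directly and match it against the prescribed right-hand side $J\nabla\operatorname{div}(JH)$, extracting a scalar identity. First I would observe that since $\Psi$ is a symplectomorphism (i.e. $\Psi^*\omega = d\lambda_{\mathrm{can}}$) and $d\varphi$ is a $t$-family of exact sections of $T^*L$, the deformation of $L_t$ inside $M$ is, after pulling back by $\Psi$, the deformation of the graph of $d\varphi$ inside $(T^*L, \omega_{\mathrm{can}})$. The velocity of the graph of $d\varphi$ is the vertical vector field corresponding to $d(\partial_t\varphi)$; under the standard identification, this is precisely the Hamiltonian vector field generated by the function $\partial_t\varphi$ on the graph. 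Transporting through $\Psi$, the normal component of $dF/dt$ is $J\nabla_{L_t}(\partial_t\varphi)$ up to the tangential reparameterization terms (which are irrelevant by the argument in subsection \ref{section} and Lemma \ref{both}). So the equation $(dF/dt)^\perp = J\nabla\operatorname{div}(JH)$ becomes $J\nabla_{L_t}(\partial_t\varphi) = J\nabla\operatorname{div}(JH)$, hence $\nabla_{L_t}(\partial_t\varphi) = \nabla\operatorname{div}(JH)$, i.e. $\partial_t\varphi = \operatorname{div}(JH) + \mathrm{const}$.

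Next I would identify $\operatorname{div}(JH)$ with a fourth-order operator applied to $\varphi$. The key is that for the Lagrangian submanifold $L_t = \operatorname{graph}(d\varphi)$ realized in $M$ via $\Psi$, the mean curvature vector $H$ and the quantity $\operatorname{div}(JH)$ are intrinsic geometric quantities of the immersion $x \mapsto \Psi(x, d\varphi(x,t))$. Writing this out, $H$ depends on $F$ through its first and second derivatives, so $JH$ is a normal vector field with coefficients in $D^2\varphi$ (and $D\varphi$, $x$), and $\operatorname{div}(JH)$ — computing the divergence along $L_t$ in the induced metric $g = g(D\varphi)$ — then involves up to $D^3\varphi$. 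The claim "$\partial_t\varphi = \operatorname{div} J\Delta_g \Psi(x,d\varphi)$" is just this identity written invariantly: $\Delta_g \Psi(x,d\varphi) = \Delta_g F$ is $H$ up to lower order tangential terms (for an isometric immersion $\Delta_g F = H$ exactly), and one applies $\operatorname{div}\circ J$. I would record that $\operatorname{div}(JH)$ is linear in the top derivatives of $H$, which are $D^2$ of the second fundamental form data, i.e. fourth derivatives of $\varphi$.

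Then I would isolate the principal (fourth-order) part. The second fundamental form of $\operatorname{graph}(d\varphi)$ has leading term the Hessian $D^2\varphi$ (this is where the Lagrangian neighborhood normalization pays off: at a point, to leading order the geometry is that of a graph of $d\varphi$ over a flat Lagrangian, whose mean curvature linearizes to $\Delta\varphi$), so $H \sim \Delta\varphi$ to leading order, $JH$ carries the same top-order term, and $\operatorname{div}(JH)$ contributes $\operatorname{div}\nabla(\Delta\varphi)$-type terms. Keeping track of the metric contractions, the quartic part is $-g^{ap}g^{ij}\partial^4\varphi/\partial x^a\partial x^j\partial x^i\partial x^p$, with the sign fixed by the first-variation/divergence conventions already set up in Section 2; everything of order three or lower is collected into $G(x, D\varphi, D^2\varphi, D^3\varphi)$, which is smooth wherever $\Psi$ is and $d\varphi$ stays in $U$. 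I would finish by noting $G$ depends on $\Psi$ through its derivatives up to the relevant order.

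The main obstacle is the bookkeeping of \emph{tangential} terms: $dF/dt$ has both a normal part (which is what \eqref{perpMM} constrains) and a tangential part coming both from the $t$-dependence of $\Psi$-coordinates and from the fact that the graph velocity of $d\varphi$ is not purely the Hamiltonian vector field on the nose. One must argue carefully — as in Lemma \ref{both} — that these tangential contributions amount to a time-dependent reparameterization of $L$ that does not affect the \emph{scalar} equation for $\varphi$, or alternatively, that the chosen identification forces the velocity's normal part to be exactly $J\nabla(\partial_t\varphi)$ with no ambiguity in the constant (which is then absorbed by the mean-zero normalization). The second, more computational difficulty is verifying that the principal symbol is really $-g^{ap}g^{ij}\xi_a\xi_j\xi_i\xi_p$ with the correct sign — this requires linearizing $\operatorname{div}(JH)$ about a flat Lagrangian graph and checking it yields $-\Delta^2\varphi$, consistent with the Calabi–Yau model computation \eqref{bilaplace}.
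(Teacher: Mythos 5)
Your proposal follows the same overall decomposition as the paper: first establish $(dF/dt)^\perp = J\nabla(\partial_t\varphi)$, strip the $J\nabla$ to reduce \eqref{perpMM} to the scalar identity $\partial_t\varphi = \operatorname{div}(JH)$, and then expand $\operatorname{div}(JH)$ in terms of $\varphi$ to isolate the quartic leading term. Where you differ is in the first step: you argue conceptually that exact vertical deformations of the zero section in $(T^*L,\Psi^*\omega)$ are Hamiltonian vector fields of their potentials, and that $\Psi$ intertwines Hamiltonian vector fields, so $dF/dt$ equals $J\nabla(\partial_t\varphi\circ\Psi^{-1})$ on the nose and its normal part is $J\nabla_{L_t}(\partial_t\varphi)$. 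The paper instead computes the normal projection directly from the frame $e_i = \Psi_i + \varphi_{ik}\delta^{jk}\Psi_{k+n}$ and the pairing identities (\ref{psi})--(\ref{psi2}) that $\Psi^*\omega = dx\wedge dv$ forces. Your shortcut is cleaner for this step; the paper's direct computation is needed anyway to carry out the third step, which you (reasonably) leave as a bookkeeping exercise. One imprecision you should fix: the second fundamental form of a Lagrangian graph over the zero section has leading term $D^3\varphi$, not $D^2\varphi$ -- the coefficients are $A_{ijl} = \varphi_{ijl} + (\text{lower order})$, cf.\ Lemma \ref{A2B2} -- so $H^a \sim g^{ij}\varphi_{ija}$ is already third order in $\varphi$, and the quartic term arises from the single extra $x$-derivative in $\operatorname{div}(JH) = -\partial_a H^a - H^m\Gamma^a_{am}$, giving $-g^{ap}g^{ij}\varphi_{ajip}$. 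What linearizes to $\Delta\varphi$ is the Lagrangian angle $\theta$, not $H$ itself; with that distinction drawn, the sign and principal symbol come out as claimed.
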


\begin{proof}
Taking $(x,v)$ for coordinates of $T^{\ast}L,$ let $y^{\alpha}$ be coordinates
in $M$, $\alpha=1, ... , 2n$. This gives a frame
\begin{equation}
e_{i}:=\frac{\partial F}{\partial x^{i}}=\frac{\partial\Psi^{\alpha}}{\partial
x^{i}}\frac{\partial}{\partial y^{\alpha}}+\frac{\partial\Psi^{\alpha}%
}{\partial v^{k}}\frac{\partial^{2}\varphi}{\partial x^{i}\partial x^{j}%
}\delta^{jk}\frac{\partial}{\partial y^{\alpha}}=\Psi_{i}+\varphi_{ij}%
\delta^{jk}\Psi_{k+n} \label{e}%
\end{equation}
where
\begin{align*}
\Psi_{i}  &  : =D_{x^{i}}\Psi=\frac{\partial\Psi^{\alpha}}{\partial x^{i}%
}\frac{\partial}{\partial y^{\alpha}}\\
\Psi_{j+n}  &  :=D_{v^{j}}\Psi=\frac{\partial\Psi^{\alpha}}{\partial v^{j}%
}\frac{\partial}{\partial y^{\alpha}}.
\end{align*}
Letting also
\[
F_{i}^{\alpha}:=\frac{\partial\Psi^{\alpha}}{\partial x^{i}}+\frac
{\partial\Psi^{\alpha}}{\partial v^{k}}\frac{\partial^{2}\varphi}{\partial
x^{i}\partial x^{j}}\delta^{jk}%
\]
we have
\[
e_{i}=F_{i}^{\alpha}\frac{\partial}{\partial y^{\alpha}}.
\]
Now suppose
\[
h=h_{\alpha\beta}dy^{a}dy^{\beta}%
\]
is the Riemannian metric on $M$. We are also assuming $\omega(V,W)=h(JV,W)$.
We compute the induced metric $g$ from the immersion:
\begin{align}
g_{ij}  &  =h(\partial_{i}F,\partial_{j}F)\label{metricg}\\
&  =\left(  \frac{\partial\Psi^{\alpha}}{\partial x^{i}}\frac{\partial
\Psi^{\beta}}{\partial x^{j}}+\sum_{k}\left(  \frac{\partial\Psi^{\alpha}%
}{\partial x^{i}}\frac{\partial\Psi^{\beta}}{\partial v^{k}}\frac{\partial
^{2}\varphi}{\partial x^{j}\partial x^{k}}+\frac{\partial\Psi^{\alpha}%
}{\partial x^{j}}\frac{\partial\Psi^{\beta}}{\partial v^{k}}\frac{\partial
^{2}\varphi}{\partial x^{i}\partial x^{k}}\right)  +\sum_{k.l}\frac
{\partial\Psi^{\alpha}}{\partial v^{k}}\partial_{ik}^{2}\varphi\frac
{\partial\Psi^{\beta}}{\partial v^{l}}\partial_{jl}^{2}\varphi\right)
h_{\alpha\beta}.\nonumber\\
&  =h(\Psi_{i}+\varphi_{ik}\Psi_{k+n},\Psi_{j}+\varphi_{jl}\Psi_{l+n}%
)\nonumber\\
&  =h(\Psi_{i},\Psi_{j})+\sum_{k}\left(  \varphi_{ik}h(\Psi_{k+n},\Psi
_{j})+\varphi_{jk}h(\Psi_{i},\Psi_{k+n})\right)  +\sum_{k,l}\varphi
_{ik}\varphi_{jl}h(\Psi_{k+n},\Psi_{l+n}).\nonumber
\end{align}
Since $\Psi:T^{\ast}L\rightarrow M$ is a symplectomorphism with $\Psi^{\ast
}\omega=dx\wedge dv$, we have
\begin{align}
\delta_{ij}  &  =dx\wedge dv(\partial/\partial x^{i},\partial/\partial
v^{j})=\Psi^{\ast}\omega\,(\partial/\partial x^{i},\partial/\partial
v^{j})\label{psi}\\
&  =h(J\Psi_{\ast}(\partial/\partial x^{i}),\Psi_{\ast}(\partial/\partial
v^{j}))=h(J\Psi_{i},\Psi_{j+n}).\nonumber
\end{align}
Similarly
\begin{equation}
h(J\Psi_{i+n},\Psi_{j+n})=\omega(\Psi_{\ast}(\partial/\partial v^{i}%
),\Psi_{\ast}(\partial/\partial v^{j}))=0. \label{psi1}%
\end{equation}
Now as $F$ describes a Lagrangian manifold, (summing repeated indices below)
\begin{align}
0  &  =\omega(e_{i},e_{j})=h(J\Psi_{i}+\partial_{k}\varphi_{i}J\Psi_{k+n}%
,\Psi_{j}+\partial_{l}\varphi_{j}\Psi_{l+n})\label{psi2}\\
&  =h(J\Psi_{i},\Psi_{j})+\partial_{l}\varphi_{j}h(J\Psi_{i},\Psi
_{l+n})+\partial_{k}\varphi_{i}h(J\Psi_{k+n},\Psi_{j})+\partial_{k}\varphi
_{i}\partial_{l}\varphi_{j}h(J\Psi_{k+n},\Psi_{l+n})\nonumber\\
&  =h(J\Psi_{i},\Psi_{j})-\varphi_{jk}h(\Psi_{i},J\Psi_{k+n})+\varphi
_{ik}h(J\Psi_{k+n},\Psi_{j})+\varphi_{ik}\varphi_{jl}h(J\Psi_{k+n},\Psi
_{l+n})\nonumber\\
&  =h(J\Psi_{i},\Psi_{j})+\varphi_{ik}\varphi_{jl}h(J\Psi_{k+n},\Psi
_{l+n})\hspace{5cm}\mbox{by \eqref{psi}}\nonumber\\
&  =h(J\Psi_{i},\Psi_{j}).\nonumber
\end{align}
Now, $\{\Psi_{i},J\Psi_{j}:1\leq i,j\leq n\}$ is a basis for the ambient
tangent space at a point in the image of $F$. So is $\{\Psi_{i},\Psi
_{j+n}:1\leq i,j\leq n\}$ (as $\Psi$ is a local diffeomorphism). We represent
the latter vectors by
\begin{equation}
\Psi_{i+n}=a^{ij}\Psi_{j}+b^{ij}J\Psi_{j}. \label{psi3}%
\end{equation}
\ifthenelse{\boolean{bluecomments}}{\textcolor{blue}{
Note that by (\ref{psi})
\begin{align*}
\delta_{ij}  &  =h(J\Psi_{j},\Psi_{i+n}) =h\left(  J\Psi_{j},a^{ik}\Psi
_{k}+b^{ik}J\Psi_{k}\right) \\
&  =a^{ik}h\left(  J\Psi_{j},\Psi_{k}\right)  +b^{ik}h\left(  J\Psi_{j},J\Psi_{k}\right) \\
&  =a^{ik}\omega\left(  \Psi_{j},\Psi_{k}\right)  +b^{ik}h_{jk}=b^{ik}h_{jk}.
\end{align*}}}{} Computing the pairing $h\left(  J\Psi_{j},\Psi_{i+n}\right)
$ using \eqref{psi} on the left and (\ref{psi3}) on the right yields
$b^{ij}=h^{ij}$ as the inverse of the positive definite matrix $h_{ij}%
:=h(\Psi_{i},\Psi_{j})$. Now recalling (\ref{F})
\[
\frac{\partial F}{\partial t}=\left(  \frac{\partial}{\partial t}%
\frac{\partial\varphi}{\partial x^{k}}\right)  \frac{\partial\Psi}{\partial
v^{k}},
\]
project onto the normal space:%
\begin{align*}
\left(  \frac{\partial F}{\partial t}\right)  ^{\perp}  &  =\frac
{\partial\varphi_{t}}{\partial x^{k}}h(\Psi_{k+n},Je_{p})Je_{q}g^{pq}\\
&  =\frac{\partial\varphi_{t}}{\partial x^{k}}h(\Psi_{k+n},J\Psi_{p}%
+\varphi_{pj}J\Psi_{j+n})Je_{q}g^{pq}\\
&  =\frac{\partial\varphi_{t}}{\partial x^{k}}h(\Psi_{k+n},J\Psi_{p}%
)Je_{q}g^{pq}\\
&  =\frac{\partial\varphi_{t}}{\partial x^{k}}\delta_{kp}Je_{q}g^{pq}%
\,\,\,\,\,\,{\mbox{ by \eqref{psi}}}\\
&  =\frac{\partial\varphi_{t}}{\partial x^{k}}\left(  Je_{q}g^{kq}\right) \\
&  =J\nabla\varphi_{t}.
\end{align*}
Let $H=H^{m}Je_{m}$ for the Lagrangian $L$. As $JH$ is tangential its
divergence on $L$ is
\begin{align}
\operatorname{div}(JH)  &  =-\operatorname{div}\left(  H^{m}e_{m}\right)
\label{div part 1}\\
&  =-g^{ab}h(\nabla_{e_{a}}\left(  H^{m}e_{m}\right)  ,e_{b})\nonumber\\
&  =-g^{ab}h\left(  \frac{\partial}{\partial x^{a}}H^{m}e_{m}+H^{m}\Gamma
_{am}^{p}e_{p},e_{b}\right) \nonumber\\
&  =-g^{ab}\frac{\partial}{\partial x^{a}}H^{m}g_{mb}-g^{ab}H^{m}\Gamma
_{am}^{p}g_{pb}\nonumber\\
&  =-\frac{\partial}{\partial x^{a}}H^{a}-H^{m}\Gamma_{am}^{a}\nonumber
\end{align}
where the Christoffel symbols are for the induced metric $g$. The components
of $H$ are given by
\begin{equation}
H^{a}=h\left(  H,Je_{p}\right)  g^{ap}=h\left(  g^{ij}\left(  \frac
{\partial^{2}F^{\beta}}{\partial x^{i}\partial x^{j}}+F_{i}^{\alpha}%
F_{j}^{\gamma}\tilde{\Gamma}_{\alpha\gamma}^{\beta}\right)  \frac{\partial
}{\partial y^{\beta}},Je_{p}\right)  g^{ap}. \label{ha_1}%
\end{equation}
Now differentiate components of (\ref{e})
\[
\frac{\partial^{2}F^{\beta}}{\partial x^{i}\partial x^{j}}=\frac{\partial
\Psi^{\beta}}{\partial x^{j}\partial x^{i}}+\frac{\partial^{3}\varphi
}{\partial x^{j}\partial x^{i}\partial x^{k}}\delta^{kl}\frac{\partial
\Psi^{\beta}}{\partial v^{l}}+\frac{\partial^{2}\varphi}{\partial
x^{i}\partial x^{k}}\delta^{kl}\frac{\partial^{2}\Psi^{\beta}}{\partial
x^{j}\partial v^{l}}.
\]
Plug in (\ref{e}) to get
\begin{align*}
h  &  \left(  \frac{\partial^{2}F^{\beta}}{\partial x^{i}\partial x^{j}}%
\frac{\partial}{\partial y^{\beta}},Je_{p}\right)  =\omega\left(  e_{p}%
,\frac{\partial^{2}F^{\beta}}{\partial x^{i}\partial x^{j}}\frac{\partial
}{\partial y^{\beta}}\right) \\
=  &  \omega\left(  \frac{\partial\Psi^{\delta}}{\partial x^{p}}\frac
{\partial}{\partial y^{\delta}}+\frac{\partial^{2}\varphi}{\partial
x^{p}\partial x^{q}}\delta^{qm}\frac{\partial\Psi^{\delta}}{\partial v^{m}%
}\frac{\partial}{\partial y^{\delta}}, \frac{\partial\Psi^{\beta}}{\partial
x^{j}\partial x^{i}}\frac{\partial}{\partial y^{\beta}}+\frac{\partial
^{3}\varphi}{\partial x^{j}\partial x^{i}\partial x^{k}}\delta^{kl}%
\frac{\partial\Psi^{\beta}}{\partial v^{l}}\frac{\partial}{\partial y^{\beta}%
}+\frac{\partial^{2}\varphi}{\partial x^{i}\partial x^{k}}\delta^{kl}%
\frac{\partial^{2}\Psi^{\beta}}{\partial x^{j}\partial v^{l}}\frac{\partial
}{\partial y^{\beta}}\right) \\
=  &  \frac{\partial^{3}\varphi}{\partial x^{j}\partial x^{i}\partial x^{k}%
}\delta^{kl}\omega\left(  \frac{\partial\Psi^{\delta}}{\partial x^{p}}%
\frac{\partial}{\partial y^{\delta}},\frac{\partial\Psi^{\beta}}{\partial
v^{l}}\frac{\partial}{\partial y^{\beta}}\right)  +\frac{\partial^{2}\varphi
}{\partial x^{p}\partial x^{q}}\delta^{qm}\frac{\partial^{3}\varphi}{\partial
x^{j}\partial x^{i}\partial x^{k}}\delta^{kl}\omega\left(  \frac{\partial
\Psi^{\delta}}{\partial v^{m}}\frac{\partial}{\partial y^{\delta}}%
,\frac{\partial\Psi^{\beta}}{\partial v^{l}}\frac{\partial}{\partial y^{\beta
}}\right) \\
&  +F_{p}^{\delta}\left(  \frac{\partial\Psi^{\beta}}{\partial x^{j}\partial
x^{i}}+\frac{\partial^{2}\varphi}{\partial x^{i}\partial x^{k}}\delta
^{kl}\frac{\partial^{2}\Psi^{\beta}}{\partial x^{j}\partial v^{l}}\right)
\omega_{\delta\beta}\circ F\\
=  &  \frac{\partial^{3}\varphi}{\partial x^{j}\partial x^{i}\partial x^{k}%
}\delta^{kl}\Psi^{\ast}\omega\left(  \frac{\partial}{\partial x^{p}}%
,\frac{\partial}{\partial v^{l}}\right)  +\frac{\partial^{2}\varphi}{\partial
x^{p}\partial x^{q}}\delta^{qm}\frac{\partial^{3}\varphi}{\partial
x^{j}\partial x^{i}\partial x^{k}}\delta^{kl}\Psi^{\ast}\omega\left(
\frac{\partial}{\partial v^{m}},\frac{\partial}{\partial v^{l}}\right) \\
&  +F_{p}^{\delta}\left(  \frac{\partial\Psi^{\beta}}{\partial x^{j}\partial
x^{i}}+\frac{\partial^{2}\varphi}{\partial x^{i}\partial x^{k}}\delta
^{kl}\frac{\partial^{2}\Psi^{\beta}}{\partial x^{j}\partial v^{l}}\right)
\omega_{\delta\beta}\circ F\\
=  &  \frac{\partial^{3}\varphi}{\partial x^{j}\partial x^{i}\partial x^{k}%
}\delta^{kl}\delta_{pl}+F_{p}^{\delta}\left(  \frac{\partial\Psi^{\beta}%
}{\partial x^{j}\partial x^{i}}+\frac{\partial^{2}\varphi}{\partial
x^{i}\partial x^{k}}\delta^{kl}\frac{\partial^{2}\Psi^{\beta}}{\partial
x^{j}\partial v^{l}}\right)  \omega_{\delta\beta}\circ F.
\end{align*}
Now also
\begin{equation}
h\left(  \frac{\partial}{\partial y^{\beta}},Je_{p}\right)  =\omega\left(
F_{p}^{\delta}\frac{\partial}{\partial y^{\delta}},\frac{\partial}{\partial
y^{\beta}}\right)  =F_{p}^{\delta}\omega_{\delta\beta}\circ F.\nonumber
\end{equation}
Combining (\ref{ha_1}) and the above
\begin{align*}
H^{a}  &  =g^{ij}g^{ap}\left(  \frac{\partial^{3}\varphi}{\partial
x^{j}\partial x^{i}\partial x^{k}}\delta^{kl}\delta_{pl}+F_{p}^{\delta}\left(
\frac{\partial\Psi^{\beta}}{\partial x^{j}\partial x^{i}}+\frac{\partial
^{2}\varphi}{\partial x^{i}\partial x^{k}}\delta^{kl}\frac{\partial^{2}%
\Psi^{\beta}}{\partial x^{j}\partial v^{l}}\right)  \omega_{\delta\beta}\circ
F\right) \\
&  +g^{ij}g^{ap}F_{i}^{\alpha}F_{j}^{\gamma}\tilde{\Gamma}_{\alpha\gamma
}^{\beta}F_{p}^{\delta}\omega_{\delta\beta}\circ F.
\end{align*}
Thus using the expression we derived in (\ref{div part 1})
\begin{align}
\operatorname*{div}  &  (JH) =-g^{ap}g^{ij}\frac{\partial^{4}\varphi}{\partial
x^{a}\partial x^{j}\partial x^{i}\partial x^{p}}-\left(  \frac{\partial
}{\partial x^{a}}\left(  g^{ij}g^{ap}\right)  \right)  \frac{\partial
^{3}\varphi}{\partial x^{j}\partial x^{i}\partial x^{p}}\label{divJH}\\
&  -\frac{\partial}{\partial x^{a}}\left(  g^{ij}g^{ap}\right)  F_{p}^{\delta
}\left[  \left(  \frac{\partial\Psi^{\beta}}{\partial x^{j}\partial x^{i}%
}+\frac{\partial^{2}\varphi}{\partial x^{i}\partial x^{k}}\delta^{kl}%
\frac{\partial^{2}\Psi^{\beta}}{\partial x^{j}\partial v^{l}}\right)
+F_{i}^{\alpha}F_{j}^{\gamma}\tilde{\Gamma}_{\alpha\gamma}^{\beta}\right]
\omega_{\delta\beta}\circ F-H^{m}\Gamma_{am}^{a}.\nonumber
\end{align}
Now recalling (\ref{metricg}), we see the metric components $g_{ab}$ involve
second order derivatives in terms of $\varphi,$ thus $\Gamma_{ij}^{k}$ are
third order. \ So each term above after the first term is at most third order.
\end{proof}

\subsection{Short time existence}

\bigskip

\begin{proposition}
\label{stexist}Given an initial smooth immersion of a compact $L\rightarrow
M,$ there exists a solution to (\ref{flow0},\ref{flow0b}) for some short time.
\end{proposition}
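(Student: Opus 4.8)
The plan is to reduce the geometric flow \eqref{flow0}–\eqref{flow0b} to the scalar equation identified in Proposition \ref{flow equation} and invoke standard parabolic theory. First I would fix a Weinstein Lagrangian neighborhood $\Psi\colon U\subset T^{*}L\to V\subset M$ of the initial immersion $\iota(L)$ (using the immersed version cited from \cite{EM}), so that the $0$-section corresponds to $\iota$ and $\Psi^{*}\omega=\omega_{\mathrm{can}}$. By Proposition \ref{flow equation}, a family of exact sections $d\varphi(\cdot,t)$ with $\varphi(\cdot,0)=0$ produces, via $F=\Psi(x,d\varphi)$, a solution of the normal-projected flow \eqref{perpMM} precisely when $\varphi$ solves
\[
\frac{\partial\varphi}{\partial t}=-g^{ap}g^{ij}\frac{\partial^{4}\varphi}{\partial x^{a}\partial x^{j}\partial x^{i}\partial x^{p}}+G(x,D\varphi,D^{2}\varphi,D^{3}\varphi),
\]
with $g$ the induced metric, which depends on $x$, $D\varphi$, $D^{2}\varphi$. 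The next step is to verify that this equation is, at $\varphi\equiv 0$, uniformly parabolic of fourth order: the principal symbol is $-g^{ap}g^{ij}\xi_{a}\xi_{j}\xi_{i}\xi_{p}=-|\xi|_{g}^{4}<0$ for $\xi\neq 0$, which is the Legendre–Hadamard (indeed strong) parabolicity condition for a scalar fourth-order operator, and it persists for $\varphi$ in a $C^{2}$-neighborhood of $0$ by continuity of $g$ in its arguments. I would also record that $G$ is smooth in all its arguments wherever $\Psi$ is defined and $g$ is nondegenerate.

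With this in hand, short-time existence (and uniqueness and smoothness) of $\varphi$ on $L\times[0,\delta)$ for some $\delta>0$ follows from the general theory for quasilinear parabolic systems/equations on closed manifolds — this is exactly the setting of \cite{MM} invoked in the introduction, but one could equally cite standard fixed-point/linearization arguments (e.g. via $\varphi\in C^{4+\alpha,1+\alpha/4}$ parabolic Hölder spaces or $L^{2}$-Sobolev theory) since $L$ is compact and the initial datum $\varphi\equiv0$ is smooth. The compactness of $L$ guarantees uniform parabolicity and uniform bounds on the coefficients and their derivatives on a short time interval, so no boundary conditions are needed. Having produced $\varphi$, set $L_{t}=F(\cdot,t)=\Psi(x,d\varphi(x,t))$; since $d\varphi(\cdot,t)$ is closed, each $L_{t}$ is Lagrangian, and by Proposition \ref{flow equation} the normal component of $\partial_{t}F$ equals $J\nabla\operatorname{div}(JH)$.

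The final step is to pass from the normal-projected equation \eqref{perpMM} back to the full geometric equation \eqref{flow0}. The point is that \eqref{flow0} only prescribes $dF/dt$ up to its tangential component: reparametrizing the family $L_{t}$ by a time-dependent family of diffeomorphisms of $L$ changes $\partial_{t}F$ by a tangent vector field and does not change the submanifolds $L_{t}$. Concretely, I would solve the ODE $\partial_{t}\psi_{t}(x)=-X_{t}(\psi_{t}(x))$, $\psi_{0}=\mathrm{id}$, where $X_{t}$ is the tangential part of $\partial_{t}F$ along $L_{t}$ (a smooth time-dependent vector field on the compact manifold $L$, hence integrable for short time), and replace $F$ by $\tilde{F}(x,t):=F(\psi_{t}(x),t)$. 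Then $\partial_{t}\tilde{F}=(\partial_{t}F)\circ\psi_{t}+dF(\partial_{t}\psi_{t})=(\partial_{t}F)^{\perp}\circ\psi_{t}=J\nabla\operatorname{div}(JH)$ along $\tilde{F}$, and $\tilde{F}(\cdot,0)=\iota$; thus $\tilde{F}$ solves \eqref{flow0}–\eqref{flow0b}. I expect the main obstacle to be purely bookkeeping: carefully checking that the coefficients of the scalar equation indeed satisfy the precise structure/regularity hypotheses of the parabolic existence theorem being cited (in particular that $G$ depends on no higher than third derivatives, which is exactly the content of the last paragraph of the proof of Proposition \ref{flow equation}), and confirming that the reparametrization $\psi_{t}$ exists on the same time interval on which $\varphi$ exists — both of which are straightforward given compactness of $L$.
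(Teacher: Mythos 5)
Your proposal follows essentially the same route as the paper's proof: reduce to the scalar equation of Proposition \ref{flow equation} in a Weinstein neighborhood, apply the quasilinear parabolic existence theorem of \cite{MM} to $\varphi$, and then reparametrize by a time-dependent diffeomorphism (the paper's Claim \ref{moddif}) to convert the normal-projected flow into the full flow \eqref{flow0}. The only additions you make — explicitly verifying the sign of the principal symbol and writing out the ODE for $\psi_t$ — are details the paper delegates to the citations of \cite{MM} and Claim \ref{moddif}, so the argument is a correct match.
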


\begin{proof}
Choose a Weinstein neighborhood containing $L.$ \ Now suppose we have
$\varphi$ which satisfies the fourth order equation
\begin{align}
&  \varphi_{t} =-g^{ap}g^{ij}\frac{\partial^{4}\varphi}{\partial x^{a}\partial
x^{j}\partial x^{i}\partial x^{p}}+G(x,D\varphi,D^{2}\varphi,D^{3}%
\varphi)=\operatorname{div}(JH)\label{MM}\\
&  \varphi(\cdot,0) =0.\nonumber
\end{align}
Then the immersions $F$ generated from $\varphi(x,t)$ satisfy
\begin{align*}
\left(  \frac{\partial F}{\partial t}\right)  ^{\perp}  &  =J\nabla\varphi_{t}
=J\nabla\operatorname{div}(JH).
\end{align*}
As the normal component satisfies the appropriate equation, we may compose
with diffeomorphisms to get a flow (see Claim \ref{moddif} below) such that%
\begin{equation}
\frac{\partial F}{\partial t}=J\nabla\operatorname{div}(JH). \label{hflow}%
\end{equation}
Now the equation (\ref{MM}) is precisely of the form of $2p$ order quasilinear
parabolic equation studied in \cite{MM}. By \cite[Theorem 1.1]{MM} we have
short time existence for the solution to (\ref{MM}), thus we have short-time
existence for the flow (\ref{hflow}).
\end{proof}

\subsection{Uniqueness}


We start with a standard observation.

\begin{claim}
\label{moddif}Suppose that $F:L\times\lbrack0,T)\rightarrow M$ is a family of
immersions satisfying
\[
\left(  \frac{\partial F}{\partial t}\right)  ^{\perp}=N(x,t)
\]
for some vector field $N(x,t)$ which is normal to the immersed submanifold
$F(\cdot,t)(L)$. There exists a unique family of diffeomorphisms $\chi
_{t}:L\rightarrow L$ such that
\[
\frac{\partial}{\partial t}F(\chi_{t}(x),t) =N(\chi_{t}(x),t) \,\,\,\,{and}%
\,\,\,\, \chi_{0} =Id_{|L}.
\]

\end{claim}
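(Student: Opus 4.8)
The plan is to reduce the claim to the standard theorem that an ordinary differential equation with smooth time-dependent coefficients on a compact manifold has a unique global flow. Write $W(x,t) = \left(\frac{\partial F}{\partial t}\right)(x,t)$ for the full velocity, and decompose it pointwise along $F(\cdot,t)$ as $W = N + V$, where $N(x,t)$ is the prescribed normal part and $V(x,t)$ is tangent to the immersed submanifold at $F(x,t)$. Since $F(\cdot,t)$ is an immersion, the differential $dF_{(\cdot,t)}: T_xL \to T_{F(x,t)}M$ is injective with image the tangent space of the submanifold, so there is a unique vector $Y(x,t) \in T_xL$ with $dF_{(\cdot,t)}(Y(x,t)) = V(x,t)$; this $Y$ depends smoothly on $(x,t)$ because $F$ is smooth and the projection onto the tangent distribution is smooth. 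We want the reparameterizing diffeomorphisms $\chi_t$ to exactly cancel this tangential drift, so that $\frac{\partial}{\partial t}F(\chi_t(x),t)$ has no component beyond $N(\chi_t(x),t)$.

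Next I would set up the defining ODE for $\chi_t$. By the chain rule,
\[
\frac{\partial}{\partial t}F(\chi_t(x),t) = dF_{(\cdot,t)}\!\left(\frac{\partial \chi_t}{\partial t}(x)\right) + W(\chi_t(x),t) = dF_{(\cdot,t)}\!\left(\frac{\partial \chi_t}{\partial t}(x)\right) + N(\chi_t(x),t) + dF_{(\cdot,t)}\bigl(Y(\chi_t(x),t)\bigr).
\]
Hence the requirement $\frac{\partial}{\partial t}F(\chi_t(x),t) = N(\chi_t(x),t)$ is equivalent, using injectivity of $dF_{(\cdot,t)}$, to
\[
\frac{\partial \chi_t}{\partial t}(x) = -\,Y(\chi_t(x),t), \qquad \chi_0 = \mathrm{Id}_L.
\]
This is the flow of the smooth time-dependent vector field $-Y$ on $L$. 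Since $L$ is compact, this flow exists for all $t$ in the interval $[0,T)$ on which $Y$ is defined and smooth, is unique, and consists of diffeomorphisms $\chi_t: L \to L$ with $\chi_0 = \mathrm{Id}$. Uniqueness of the family $\{\chi_t\}$ follows from uniqueness of solutions to this ODE.

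I do not see a serious obstacle here; this is genuinely a standard observation, and the only points requiring a word of care are: (i) checking that the tangential-drift vector field $Y$ is smooth in $(x,t)$ — which follows because the orthogonal (or any smooth) projection of $T_{F(x,t)}M$ onto the immersed tangent space varies smoothly and $dF$ is a smooth bundle injection, so $Y = (dF)^{-1}\circ \mathrm{proj}^{\top}\circ W$ is smooth; and (ii) ensuring the $\chi_t$ are diffeomorphisms of $L$ and not merely local diffeomorphisms or maps into $L$, which is immediate from compactness of $L$ (the vector field $-Y$ is complete on $[0,T)$, so its time-$t$ flow is a diffeomorphism with smooth inverse given by the backward flow). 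If one wanted to be careful about the immersed (rather than embedded) case, one notes that $Y$ is defined using the intrinsic splitting $W = N + V$ at each point $x \in L$ — no global embedding of the image is needed — so everything goes through verbatim.
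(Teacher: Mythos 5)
Your proof is correct and follows essentially the same route as the paper: decompose the velocity into normal and tangential parts, pull the tangential part back to a time-dependent vector field on $L$ via $(dF)^{-1}$, and invoke the existence-uniqueness theorem for flows on a compact manifold. You spell out the chain-rule computation that the paper leaves implicit, but the underlying argument is identical.
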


\begin{proof}
Given the flow exists, the given velocity field will decompose orthogonally
into normal and tangential components:
\[
\frac{\partial F}{\partial t}=N(x,t)+T\left(  x,t\right)  .
\]
Consider the time-dependent vector field on $L$%
\[
V(x,t)=-D_{L}F(x,t)^{-1}T\left(  x,t\right)
\]
By the Fundamental Theorem on Flows, (cf. \cite[Theorem 9.48]{Jack}) there is
a unique flow on $L$ starting at the identity and satisfying%
\[
\frac{\partial}{\partial t}\chi_{t}(x)=V(\chi_{t}(x),t).
\]
Composing this flow with the original flow $F$ yields the result.
\end{proof}

\begin{theorem}
\label{thm31}The solution to the initial value problem \eqref{flow0} -
\eqref{flow0b} is unique. More precisely, if $F_{1}$ and $F_{2}$ are two
solutions of \eqref{flow0} such that $F_{1}(x,t_{0})=F_{2}(x,t_{0}^{\prime})$
for some $t_{0} ,t_{0}^{\prime}$ and all $x\in L$, then
\[
F_{1}(x,t_{0}+\tau)=F_{2}(x,t_{0}^{\prime}+\tau)
\]
for all $\tau$ in an open neighborhood of $0$ where both sides above are defined.
\end{theorem}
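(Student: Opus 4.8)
The plan is to reduce uniqueness for the degenerate system \eqref{flow0} to uniqueness for the strictly parabolic \emph{scalar} equation \eqref{MM} inside a single fixed Weinstein neighborhood, and then invoke the uniqueness half of \cite[Theorem 1.1]{MM}. First I would normalize: by translating time, assume $t_0 = t_0' = 0$, and write $L := F_1(\cdot,0)(L) = F_2(\cdot,0)(L)$ as a common compact immersed Lagrangian submanifold of $M$. Fix one Weinstein neighborhood $\Psi\colon U\subset T^*L \to V\subset M$ with $\Psi^*\omega = d\lambda_{\mathrm{can}}$ and $\Psi|_L = \mathrm{id}$, as in subsection \ref{existence}. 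The key structural point is that each $F_j$, being a solution of \eqref{flow0}, is in particular a flow of Lagrangian submanifolds whose \emph{velocity is normal}; since Lagrangian submanifolds $C^1$-close to $L$ are graphs of closed (indeed exact, after a cohomology check on $[0,\tau]$ using $L$ compact and the flow starting at the zero section) $1$-forms on $L$, for small $\tau$ each $L_t^{(j)} := F_j(\cdot,t)(L)$ is $\Psi(x, d\varphi_j(x,t))$ for a uniquely determined smooth function $\varphi_j$ with $\varphi_j(\cdot,0) = 0$, normalized by, say, $\int_L \varphi_j \, dV = 0$.

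Next I would apply Proposition \ref{flow equation}: because $F_j$ satisfies $(dF_j/dt)^\perp = J\nabla\operatorname{div}(JH)$, the associated scalar function $\varphi_j$ satisfies
\[
\frac{\partial \varphi_j}{\partial t} = -g^{ap}g^{ij}\,\partial^4_{x^a x^j x^i x^p}\varphi_j + G(x, D\varphi_j, D^2\varphi_j, D^3\varphi_j), \qquad \varphi_j(\cdot,0)=0,
\]
with $G$ and the coefficients $g^{ij}=g^{ij}(x,D^2\varphi_j)$ depending only on the fixed $\Psi$ — the \emph{same} operator for $j=1,2$. This is exactly the $2p=4$ quasilinear parabolic equation covered by \cite[Theorem 1.1]{MM}, whose conclusion includes uniqueness of the solution with given smooth initial data. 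Hence $\varphi_1 \equiv \varphi_2$ on $L\times[0,\tau)$ for some $\tau>0$, and therefore $L_t^{(1)} = L_t^{(2)}$ as subsets of $M$ for all small $t\ge 0$.

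It remains to upgrade equality of the images to equality of the parametrized maps $F_1(x,t) = F_2(x,t)$. Here I would use Claim \ref{moddif}: since the image submanifolds agree and both $F_j$ have the same prescribed normal velocity $N(x,t) = J\nabla\operatorname{div}(JH)$ (which depends only on the submanifold $L_t$, not the parametrization), each $F_j$ differs from the canonical normal-velocity representative only by the unique tangential reparametrization family $\chi_t^{(j)}$ solving the ODE $\partial_t \chi_t = V^{(j)}(\chi_t, t)$, $\chi_0 = \mathrm{Id}$, from the claim. Because $F_1(\cdot,0) = F_2(\cdot,0)$ as maps $L\to M$ and the tangential components $T^{(j)}(x,t)$ — hence the vector fields $V^{(j)} = -(D_L F_j)^{-1}T^{(j)}$ — are determined by the (now identical) flow of submanifolds together with the (identical) full velocity $\partial_t F_j = N + T^{(j)}$, the two ODE systems coincide, so by uniqueness of integral curves $\chi_t^{(1)} = \chi_t^{(2)}$ and thus $F_1 = F_2$ on $L\times[0,\tau)$.

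The main obstacle I anticipate is the passage from normal-component equality to full equality of maps, i.e., correctly setting up the argument in the previous paragraph: one must be careful that a solution of \eqref{flow0} has \emph{full} velocity $J\nabla\operatorname{div}(JH)$, which is itself a normal field, so the "tangential correction" is not automatically zero — it arises because the family $t\mapsto L_t$ of \emph{images} can be swept out by many parametrizations, and one must check that reconstructing $\varphi_j$ from the images and then re-deriving the parametrization via Claim \ref{moddif} returns exactly $F_j$. A clean way to handle this is to observe that, given the common family of image submanifolds and the common normal velocity, Claim \ref{moddif} produces a \emph{unique} map with that image-flow, that initial condition, and velocity equal to $J\nabla\operatorname{div}(JH)$; since both $F_1$ and $F_2$ have all three properties, they coincide. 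The remaining details — short-time existence of the exactness primitive $\varphi_j$, smoothness of $\varphi_j$ in $(x,t)$, and verifying the hypotheses of \cite[Theorem 1.1]{MM} for the operator — are routine given Proposition \ref{flow equation} and the computations in subsection \ref{existence}.
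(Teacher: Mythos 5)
Your proposal takes essentially the same route as the paper: reduce to the scalar equation \eqref{MM} in a fixed Weinstein neighborhood via exactness of the corresponding sections of $T^{*}L$, then conclude with the parabolic uniqueness of \cite[Theorem 1.1]{MM}. The only deviation is your final paragraph, which is more elaborate than needed — since solutions of \eqref{flow0} already have purely normal velocity, both $F_{1}$ and $F_{2}$ are integral flows of the identical time-dependent normal field $J\nabla\operatorname{div}(JH)$ determined by the (now equal) images $L_{t}$, so $F_{1}=F_{2}$ follows at once from ODE uniqueness, with no tangential reparametrization $\chi_{t}^{(j)}$ ever entering and no need to invoke Claim \ref{moddif}.
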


\begin{proof}
Without loss of generality, we take $t_{0}=t_{0}^{\prime}=0$. Let
$L=F_{1}(\cdot,0)=F_{2}(\cdot,0)$ and $\Psi:U\subset T^{\ast}L\to V\subset M$
be a Lagrangian neighborhood mapping.

First, we show that the normal flow of $F_{i}(\cdot,t)$ is given in the
neighborhood $\Psi$ by the graph of an exact section $d\varphi_{i}(\cdot,t)$
where $\varphi_{i}$ solves a problem of the form (\ref{MM}). To this end, note
that for $\tau$ in the domain, the path $\left\{  F_{i}(\cdot,t),t\in
\lbrack0,\tau]\text{ }\right\}  $ is a Hamiltonian isotopy between
$F_{i}(\cdot,0)$ and $F_{i}(\cdot,\tau)$. Being a Hamiltonian isotopy is
invariant under the symplectomorphism $\Psi$, so the sections $\Psi
^{-1}\left(  F_{i}(\cdot,0)\right)  $ and $\Psi^{-1}\left(  F_{i}(\cdot
,\tau)\right)  $ are Hamiltonian isotopic. By \cite[Corollary 6.2]{Weinstein},
Lagrangian submanifolds that are near to the $0$-section are given as graphs
of closed sections of the cotangent bundle. As the flow is smooth, for small
times the Lagrangian submanifolds are near enough to be described by closed
sections. According to \cite[Proposition 9.4.2]{McDuffSalamon}, these sections
are exact, that is
\[
\Psi^{-1}\circ F_{i}(\cdot,\tau)\left(  L\right)  =\left\{  d\varphi
_{i}\left(  x,\tau\right)  :x\in L\right\}  .
\]
In other words,
\[
\Psi(\left\{  d\varphi_{i}\left(  x,\tau\right)  :x\in L_{i}\right\}
)=F_{i}(\cdot,\tau)\left(  L\right)
\]
meaning that for each $\tau$
\[
\Psi\circ d\varphi_{i}:L\rightarrow T^{\ast}L\rightarrow M
\]
is a Lagrangian immersion, which may have reparameterized the base. In
particular, the flow $F_{i}$ determines a flow of scalar functions
$\varphi_{i},$ which recovers the same family of submanifolds at $F_{i}$ (up
to reparameterization) as do $\Psi\circ d\varphi_{i}\left(  \cdot,t\right)  $.
By Proposition \ref{flow equation}, the scalar equation (\ref{MM}) holds on
$\varphi_{i}$ as does the initial condition $d\varphi_{i}\equiv0$. It follows
that $\varphi_{1}$ and $\varphi_{2}$ both satisfy the same equation (\ref{MM})
and have the same initial condition, so $\varphi_{1}=\varphi_{2}+C$ for some
constant $C$. Thus the flows $F_{1}$ and $F_{2}$ are the same.
\end{proof}

Theorem \ref{thm31} allows for seamless extension of the flow: While the
Weinstein's Lagrangian neighborhood may only exist around $L_{0}$, if another
Lagrangian neighborhood of $L_{0}$ extends the flow, the two flows patch
together smoothly.

\section{Higher order estimates based on curvature bounds}

The goal of this section is to show that a solution with uniformly bounded
second fundamental form over $[0,T)$ enjoy estimates of all orders and can be extended.

\begin{theorem}
\label{c1}Suppose that the flow (\ref{flow0}) exists on $[0,T)$ and the second
fundamental form has a uniform bound on $[0,T)$. Then the flow converges
smoothly as $t\rightarrow T$ so can be extended to $[0,T+\varepsilon)$ for
some $\varepsilon>0$.
\end{theorem}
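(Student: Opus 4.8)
The plan is to run a standard bootstrapping argument in fixed coordinate charts, using the parabolic scalar equation \eqref{MM} together with the second fundamental form bound as the sole a priori input. The main point is that, unlike for mean curvature flow, there is no maximum principle available for the fourth order equation, so the higher order estimates must come from $L^2$-type (energy/Sobolev) methods rather than from Schauder or pointwise arguments. First I would fix, once and for all, a finite collection of Darboux charts covering a neighborhood of the flow; the subtlety flagged in the introduction is that an arbitrary Weinstein neighborhood of the moving submanifold does not obviously stay uniformly controlled, so I would instead invoke the description of Darboux charts from \cite{JLS} to produce a finite atlas in which the submanifolds $L_t$ for $t$ near $T$ are all graphs of exact $1$-forms $d\varphi_i$ with $\varphi_i$ solving an equation of the form \eqref{MM}. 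The uniform bound on $|A|$ gives, via \eqref{metricg} and the formulas in the proof of Proposition \ref{flow equation}, a uniform two-sided bound on the induced metric $g$ and on $D^2\varphi$ in these charts, so the equation \eqref{MM} is uniformly parabolic with coefficients whose ellipticity constants do not degenerate as $t\to T$.

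Next I would set up the energy estimates. Differentiating \eqref{MM}, one controls $\frac{d}{dt}\int |D^k\varphi|^2\,dV_g$ by terms that, after integration by parts, are absorbed into $-\int |D^{k+2}\varphi|^2\,dV_g$ plus lower order terms, using interpolation (Gagliardo–Nirenberg) inequalities on the compact manifold $L$; the nonlinearity $G(x,D\varphi,D^2\varphi,D^3\varphi)$ is at most third order, and with $D^2\varphi$ already bounded, the dangerous $D^3\varphi$ terms appear with one derivative less than the leading $D^4$ term and are controlled by Young's inequality. Starting from the $|A|$ bound (equivalently, a bound on $D^2\varphi$ in $L^\infty$, hence in every $L^p$) and iterating, I would obtain uniform-in-$t$ bounds on $\|\varphi(\cdot,t)\|_{H^k}$ for every $k$, and then Sobolev embedding upgrades these to uniform $C^k$ bounds. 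Parabolic regularity for \eqref{MM} (again in the sense of the $L^2$/Sobolev theory, e.g.\ the framework of \cite{MM}) also gives control of the time derivatives, so $\varphi(\cdot,t)$ is Cauchy in every $C^k$ norm as $t\to T$ and converges to a smooth limit $\varphi(\cdot,T)$. Correspondingly $F(\cdot,t)\to F(\cdot,T)$ smoothly, $L_T$ is a smooth compact Lagrangian immersion, and applying Proposition \ref{stexist} with initial data $L_T$ (together with the uniqueness and patching from Theorem \ref{thm31}) extends the flow past $T$.

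The main obstacle I anticipate is the first energy step, i.e.\ closing the $H^3$ (equivalently $D^3\varphi$) estimate purely from the $|A|$ bound: one has a priori control only of the metric (second order in $\varphi$), so the third order term has to be recovered from the equation itself, and the integration-by-parts that converts the top order term $-g^{ap}g^{ij}\varphi_{ajip}$ into $-|D^4\varphi|^2$ produces commutator terms involving derivatives of $g$, which are themselves third order. Keeping careful track of which terms are genuinely "borderline" — and verifying that each can be interpolated between the bounded $D^2\varphi$ and the coercive $\int|D^4\varphi|^2$ — is the delicate part; a secondary technical nuisance is bookkeeping across the overlapping charts and handling the base-reparameterizations (the diffeomorphisms $\chi_t$ from Claim \ref{moddif}) so that the scalar estimates transfer back to genuine geometric estimates on $F$. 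Once the $H^3$ bound is in hand, the higher $H^k$ bounds follow by the same mechanism with strictly easier error terms, and the convergence and extension are routine.
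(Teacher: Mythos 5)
Your overall strategy matches the paper's: fix a finite atlas of uniformly controlled Darboux charts via \cite{JLS}, derive $L^2$ energy estimates for the parabolic scalar equation \eqref{MM} using Gagliardo--Nirenberg interpolation with $D^2\varphi$ (equivalently $|A|$) bounded as the only input, upgrade to $C^k$ by Sobolev embedding, pass to the limit at $T$, and restart via Proposition \ref{stexist} and Theorem \ref{thm31}. You also correctly identify that the delicate step is controlling the $D^3\varphi$-level terms by interpolation against the coercive $\int|D^{k+4}\varphi|^2$ term.

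However, you have inverted the difficulty ranking: what you call a ``secondary technical nuisance'' --- bookkeeping across overlapping charts --- is in fact the central structural obstacle, and the paper devotes a dedicated device to it. If you try to close the energy estimate chart by chart on $\int\rho_\alpha^2|D^k\varphi|^2$, the cutoff derivatives produce error terms like $\int_{B_3}|D^{k+4}\varphi|^2$ (over a larger ball) that must be absorbed by the good term $-\int_{B_2}\rho_\alpha^2|D^{k+4}\varphi|^2$ (over a smaller ball), and these two quantities live over different regions \emph{and} are chart-dependent (the coordinate tensor $D^{k+4}\varphi$ has no invariant meaning across charts). The paper resolves this by formulating the estimate in terms of the globally defined geometric quantity $\int_L|\nabla^{k-1}A|^2\,dV_g$, and Lemma \ref{enoughalready} is precisely the bridge: it bounds $\sum_\alpha\int_{B_3}|D^{k+4}\varphi|^2$ by $N\sum_m\int_L|\nabla^{k+1-m}A|^2 + C$, so that summing the chart estimates over $\alpha$ and using $\sum_\alpha\rho_\alpha^2\equiv 1$ lets the good terms on the small balls genuinely offset the bad terms on the large balls. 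Your proposal, working directly with $\int|D^k\varphi|^2$ per chart, has no mechanism for this and would not close; you need the conversion $\nabla^{k-1}A = D^{k+2}\varphi + S_k$ of Lemma \ref{A2B2} and the summation-over-charts argument of Proposition \ref{cp11}.

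A second, smaller gap: you assert ``$F(\cdot,t)\to F(\cdot,T)$ smoothly, $L_T$ is a smooth compact Lagrangian immersion'' as an immediate consequence of chart-level Hölder bounds on $\varphi$, but this requires showing that the coordinate charts on $L$ do not collapse as $t\to T$ --- equivalently, that the reparameterizing diffeomorphisms $\chi_t$ from Claim \ref{moddif} stay bounded in $C^1$ and their limit $\chi_T$ remains a diffeomorphism. The paper proves this using the uniform bound on the tilt of $L_t$ in each chart and on $(d\bar F)^{-1}$; without this step you only get a $C^0$ limit map, not a smooth immersion to which Proposition \ref{stexist} can be applied.
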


To prove this theorem, it is essential in our approach to establish a-priori
estimates from the integral estimates derived from the following differential inequality:

\begin{proposition}
\label{cp11}Suppose that $F$ is a solution to (\ref{flow0}) on $[0,T)$ for a
compact Lagrangian submanifold $L$ inside a compact $M$. Suppose the second
fundamental form has a uniform bound $K$. There exists $C$ depending on $K$,
the ambient geometry of $M$ and $\operatorname{Vol}(L_{0})$ such that for all
$k\geq2$%
\begin{equation}
\frac{d}{dt}\int_{L}\left\vert \nabla^{k-1}A\right\vert ^{2}dV_{g}(t)\leq
C\int_{L}\left\vert \nabla^{k-1}A\right\vert ^{2}dV_{g}(t)+C\sum_{l=0}%
^{k-2}\int_{L}\left\vert \nabla^{l}A\right\vert ^{2}dV_{g}(t).
\label{cp11result}%
\end{equation}

\end{proposition}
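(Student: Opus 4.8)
The plan is to differentiate $\int_L |\nabla^{k-1}A|^2\,dV_g(t)$ under the flow, using the standard Bianchi-type interpolation machinery familiar from higher-order curvature flows (the polyharmonic/surface-diffusion literature). First I would record the evolution equations under $\partial_t F = J\nabla\operatorname{div}(JH)$: since the normal speed is $J\nabla\operatorname{div}(JH) = -J\nabla\Delta\theta$-type (a fourth-order operator applied to the position, morally $\nabla^3 A$), the induced metric evolves by $\partial_t g_{ij} = O(\nabla^2 A \ast A)$ at top order (schematically, $\partial_t g$ involves the normal speed paired with $A$), the volume form by $\partial_t (dV_g) = O(\nabla^2 A\ast A + \text{lower})\,dV_g$, and the second fundamental form by a fourth-order evolution of the shape $\partial_t A = -\nabla^4 A + \sum_{i+j+k = \text{const}} \nabla^i A\ast\nabla^j A\ast\nabla^k A + R_M\text{-terms}$, where $R_M$ denotes curvature of the ambient $M$ and its covariant derivatives, which are bounded since $M$ is compact. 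Commuting $\nabla^{k-1}$ past this evolution (which produces further curvature commutator terms, again schematically polynomial in $\nabla^{\le k-1}A$ and $R_M$) gives $\partial_t \nabla^{k-1}A = -\nabla^{k+3}A + P$, where $P$ is a universal sum of terms $\nabla^{a}A\ast\nabla^b A\ast\nabla^c A$ with $a+b+c = k-1+2 = k+1$ (plus terms linear in $\nabla^{\le k-1}A$ with bounded $R_M$ coefficients).

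Next I would compute
\[
\frac{d}{dt}\int_L |\nabla^{k-1}A|^2\,dV_g
= 2\int_L \langle \nabla^{k-1}A, \partial_t \nabla^{k-1}A\rangle\,dV_g
+ \int_L |\nabla^{k-1}A|^2\,\partial_t(dV_g)
\]
and integrate the leading term by parts: $2\int_L \langle \nabla^{k-1}A, -\nabla^{k+3}A\rangle\,dV_g = -2\int_L |\nabla^{k+1}A|^2\,dV_g + (\text{curvature commutator corrections})$. The key point is that the dominant term is $-2\int_L|\nabla^{k+1}A|^2\,dV_g \le 0$, so it can be discarded for the stated inequality (it will be crucial if one later wants decay, but here it only helps). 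Everything else must be bounded by $C\int_L|\nabla^{k-1}A|^2 + C\sum_{l=0}^{k-2}\int_L|\nabla^l A|^2$.

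The engine for that bound is the Michael–Simon Sobolev inequality on $L_t$ together with multiplicative interpolation inequalities of Gagliardo–Nirenberg type for tensors on a submanifold, exactly as in Kuwert–Schulze / Dziuk–Kuwert–Schätzle. Concretely: each cubic term $\int_L \nabla^{k-1}A \ast \nabla^a A\ast\nabla^b A\ast\nabla^c A$ with $a+b+c = k+1$, $a,b,c \le k-1$, is estimated by Hölder and then by the interpolation estimate
\[
\|\nabla^j A\|_{L^p} \le C\,\|A\|_{L^\infty}^{1-\sigma}\|\nabla^{k+1}A\|_{L^2}^{\sigma} \quad\text{(schematically)},
\]
which, after summing exponents, yields a bound of the form $\varepsilon\int_L|\nabla^{k+1}A|^2 + C(\varepsilon,K)\int_L|\nabla^{\le k-1}A|^2$; the Michael–Simon inequality is what converts $L^p$ norms into $L^2$ norms of one more derivative, and its constant depends on $\|H\|_{L^\infty}\le K$ and hence only on $K$ and $\Vol(L_0)$ (volume is non-increasing along the flow, so $\Vol(L_t)\le\Vol(L_0)$). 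Absorbing the $\varepsilon\int_L|\nabla^{k+1}A|^2$ into the good negative term, separating out $\int_L|\nabla^{k-1}A|^2$ with coefficient $C$, and collecting the remaining lower-order integrals $\int_L|\nabla^l A|^2$, $l\le k-2$, gives exactly \eqref{cp11result}. The terms linear in $\nabla^{\le k-1}A$ coming from $R_M$ are handled trivially by Cauchy–Schwarz since the $R_M$-coefficients are bounded in terms of the ambient geometry.

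The main obstacle is bookkeeping rather than conceptual: one must verify that the evolution equation for $A$ under this \emph{fourth-order} flow has the schematic form claimed — in particular that the highest-order term is exactly $-\nabla^4 A$ with the correct (parabolic) sign and that all remaining terms are cubic of total order $\le k+1$ with bounded ambient-curvature coefficients — and then that the Gagliardo–Nirenberg exponent count closes, i.e. that every cubic term genuinely admits a bound $\varepsilon\|\nabla^{k+1}A\|_{L^2}^2 + C\|\nabla^{\le k-1}A\|_{L^2}^2$ and none is "supercritical". Since for a fixed flow of curvature order $4$ the scaling is $[A]\sim 1$, $[\nabla]\sim 1$, $[\partial_t]\sim 4$, the scaling of $\int_L|\nabla^{k-1}A|^2$ matches $\int_L|\nabla^{k+1}A|^2$ after one time-derivative, so the count does close; but this is the step that needs care. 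A secondary subtlety is that $L_t$ may be only immersed, so the Michael–Simon Sobolev inequality must be invoked in its immersed form (valid, with the same dependence on $\|H\|_\infty$), which is legitimate here.
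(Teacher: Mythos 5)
Your route is genuinely different from the paper's. The paper works extrinsically: it covers $L_t$ by finitely many uniform Darboux charts (Proposition \ref{JLScharts2}), writes the flow as a scalar quasilinear fourth-order PDE for a graphing potential $\varphi$ in each chart, translates $\nabla^{k-1}A$ into $D^{k+2}\varphi+S_k$ (Lemma \ref{A2B2}), runs Euclidean Gagliardo--Nirenberg on nested balls $B_2\subset B_3$ with a partition of unity, and then uses Lemma \ref{enoughalready} to convert the chart-local error $\varepsilon\int_{B_3}|D^{k+4}\varphi|^2$ into the globally defined quantity $\sum_m\int_L|\nabla^{m}A|^2$, so that good negative terms in one chart can offset bad terms living on larger balls of other charts. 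You instead propose the intrinsic Kuwert--Sch\"atzle / Dziuk--Kuwert--Sch\"atzle route: derive a Simons-type evolution identity $\partial_t A=-\nabla^4 A+P$, commute $\nabla^{k-1}$, integrate by parts on $L_t$ to produce $-2\int_L|\nabla^{k+1}A|^2$, and interpolate via Michael--Simon Sobolev. Each buys something: yours avoids the chart-patching problem entirely (every quantity is global on $L_t$), while the paper's avoids both intrinsic submanifold Sobolev inequalities and the computation of the evolution equation for $A$.

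The genuine gap is the step you yourself flag as ``bookkeeping'': the intrinsic evolution equation for $A$ under $\partial_t F=J\nabla\operatorname{div}(JH)$ is never derived, and it is not a priori obvious that its principal part is a full biharmonic with the correct sign after integration by parts (rather than an indefinite fourth-order operator), nor that the remainder is cubic of the scaling you assume with ambient-curvature coefficients that remain of controlled order after commuting $\nabla^{k-1}$. The paper's architecture is designed precisely to sidestep this computation: the Darboux-chart scalar equation makes the leading term visibly $-g^{ap}g^{ij}\varphi_{apij}$ and the remainder $G$ structurally transparent (Proposition \ref{flow equation}, Remark \ref{remark1}). Until the Simons-type identity for this specific flow is written out and its schematic form confirmed, the interpolation count --- however plausible on scaling grounds --- is a plan, not a proof. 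A secondary point: since $M$ is a compact K\"ahler manifold rather than $\mathbb{R}^{2n}$, the Michael--Simon step requires either the Hoffman--Spruck version with its attendant hypotheses, or an isometric embedding of $M$ into Euclidean space (as the paper does in Proposition \ref{A-bound}) with the resulting bounded correction to $|A|$; this is routine but needs to be said.
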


A Weinstein neighborhood map determines the equation the flow must satisfy,
and we could derive estimates of all orders based on this particular
equation.
However, the flow is expected to leave a given neighborhood after some time,
and we will need to take a new neighborhood. We would need to know the speed
of the flow to patch estimates from one neighborhood to another, but this
requires knowing the size of the Weinstein neighborhoods around the Lagrangian
submanifolds at different times.

We require charts with uniform geometric estimates. To obtain these we appeal
to uniform local Darboux coordinates given in \cite{JLS}. These charts are
local but are given with uniform geometric bounds. The short-time existence of
the flow is already determined by the global Weinstein neighborhoods; we write
the flow in these Darboux charts as a scalar equation from which we derive
integral estimates for derivatives of any order.

\subsection{Uniform Darboux charts.}

We record \cite[Prop.3.2 and Prop.3.4]{JLS} on existence of Darboux
coordinates with estimates on a symplectic manifold. Let $\pi:\mathcal{U}%
\rightarrow M$ be the $U(n)$ frame bundle of $M$. A point in $\mathcal{U}$ is
a pair $(p,v)$ with $\pi(p,v)=p\in M$ and $v:\mathbb{R}^{2n}\rightarrow
T_{p}M$ an isomorphism satisfying $v^{\ast}(\omega_{p})=\omega_{0}$ and
$v^{\ast}(h|_{p})=h_{0}$ (the standard metric on $\mathbb{C}^{n}$). The right
action of $U(n)$ on $\mathcal{U}$ is free: $\gamma(p,v)=(p,v\circ\gamma)$ for
any $\gamma\in U(n)$.

\begin{proposition}
[Joyce-Lee-Schoen]\label{Darboux} Let $(M,\omega)$ be a real $2n$-dimensional
symplectic manifold without boundary, and a Riemannian metric $h$ compatible
with $\omega$ and an almost complex structure $J$. Let $\mathcal{U}$ be the
$U(n)$ frame bundle of $M$. Then for small $\varepsilon>0$ we can choose a
family of embeddings $\Upsilon_{p,v}:B^{2n}_{\varepsilon}\rightarrow M$
depending smoothly on $\left(  p,v\right)  \in U$, where $B^{2n}_{\varepsilon
}$ is the ball of radius $\varepsilon$ about $0$ in $\mathbb{C}^{n},$ such
that for all $\left(  p,v\right)  \in U$ we have

\begin{enumerate}
\item $\Upsilon_{p,v}(0)=p$ and $d\Upsilon_{p,v}|_{0}=v:\mathbb{C}%
^{n}\rightarrow T_{p}M;$

\item $\Upsilon_{p,v\circ\gamma}(0)\equiv\Upsilon_{p,v}\circ\gamma$ for all
$\gamma\in U(n);$

\item $\Upsilon_{p,v}^{\ast}(\omega)\equiv\omega_{0}=\frac{\sqrt{-1}}{2}
\sum_{j=1}^{n}dz_{j}\wedge d\bar{z}_{j};$ and

\item $\Upsilon_{p,v}^{\ast}(h)=h_{0}+O(|z|)$.
\end{enumerate}

Moreover, for a dilation map $\mathbf{t}:B^{2n}_{R}\to B^{2n}_{\varepsilon}$
given by $\mathbf{t}(z)=tz$ where $t\leq\varepsilon/R$, set $h^{t}_{p,v} =
t^{-2}(\Upsilon_{p,v}\circ\mathbf{t})^{*}h$. Then it holds

\begin{enumerate}
\item[(5)] $\|h^{t}_{p,v}-h_{0}\|_{C^{0}} \leq C_{0} t \ \ \ \ \mbox{and}
\ \ \ \|\partial h^{t}_{p,v}\|_{C^{0}}\leq C_{1} t$,
\end{enumerate}

where norms are taken w.r.t. $h_{0}$ and $\partial$ is the Levi-Civita
connection of $h_{0}$.
\end{proposition}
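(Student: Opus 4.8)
This is the parametrized, quantitative Darboux theorem of \cite{JLS}; the proof combines geodesic normal coordinates with a Moser deformation carried out equivariantly and with parameters. Assume $M$ is compact (the case used here), so the $U(n)$-frame bundle $\mathcal{U}$ is compact; this is precisely what makes every radius and constant below uniform (the general statement requires a bounded-geometry hypothesis, but the local construction is identical). \emph{Preliminary equivariant chart.} For $\varepsilon_{0}>0$ small and uniform, set $\Xi_{p,v}:=\exp_{p}\circ\,v:B^{2n}_{\varepsilon_{0}}\to M$, with $\exp_{p}$ the Riemannian exponential map of $h$. This is an embedding depending smoothly on $(p,v)\in\mathcal{U}$, it is $U(n)$-equivariant ($\Xi_{p,v\circ\gamma}=\Xi_{p,v}\circ\gamma$), and $\Xi_{p,v}(0)=p$, $d\Xi_{p,v}|_{0}=v$. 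Since $v^{*}(h|_{p})=h_{0}$ and geodesic normal coordinates kill the first derivatives of the metric, $\Xi_{p,v}^{*}h=h_{0}+O(|z|^{2})$ with uniformly bounded $C^{1}$-norm of the error; since $v^{*}(\omega_{p})=\omega_{0}$, $\Xi_{p,v}^{*}\omega=\omega_{0}+O(|z|)$, exactly $\omega_{0}$ only at the origin.

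\emph{Moser correction with parameters.} Put $\omega_{1}:=\Xi_{p,v}^{*}\omega$ and $\beta:=\omega_{1}-\omega_{0}$, a closed $2$-form vanishing at $0$, so $\beta=O(|z|)$. Applying the explicit radial homotopy operator $K$ of the Poincar\'e lemma, write $\beta=d\sigma$ with $\sigma:=K\beta=O(|z|^{2})$; here $\sigma$ depends linearly, hence smoothly, on $\beta$, and $K$ commutes with linear substitutions of the variable. For $\varepsilon\le\varepsilon_{0}$ small and uniform, $\omega_{s}:=\omega_{0}+s\beta$ is nondegenerate on $B^{2n}_{\varepsilon}$ for all $s\in[0,1]$; define $X_{s}$ by $\iota_{X_{s}}\omega_{s}=-\sigma$, so $X_{s}=O(|z|^{2})$. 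Its flow $\psi_{s}$ exists on $[0,1]$ over a uniform shrinking of $B^{2n}_{\varepsilon}$, fixes $0$, and satisfies $d\psi_{s}|_{0}=\mathrm{id}$ and $\psi_{s}=\mathrm{id}+O(|z|^{2})$ with uniform bounds. The Moser identity $\tfrac{d}{ds}\psi_{s}^{*}\omega_{s}=\psi_{s}^{*}\!\big(d\,\iota_{X_{s}}\omega_{s}+\beta\big)=\psi_{s}^{*}(-d\sigma+\beta)=0$ yields $\psi_{1}^{*}\omega_{1}=\omega_{0}$.

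\emph{The chart and its properties.} Set $\Upsilon_{p,v}:=\Xi_{p,v}\circ\psi_{1}$ on $B^{2n}_{\varepsilon}$. Then $\Upsilon_{p,v}(0)=p$, $d\Upsilon_{p,v}|_{0}=v$ (giving (1)); $\Upsilon_{p,v}^{*}\omega=\psi_{1}^{*}\omega_{1}=\omega_{0}$ (giving (3)); and since $\psi_{1}=\mathrm{id}+O(|z|^{2})$ forces $D\psi_{1}=I+O(|z|)$, $\Upsilon_{p,v}^{*}h=\psi_{1}^{*}\big(h_{0}+O(|z|^{2})\big)=h_{0}+O(|z|)$ (giving (4)). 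For (2), track how the Step-2 data transform under $\gamma\in U(n)$, using $\gamma^{*}\omega_{0}=\omega_{0}$: $\omega_{1}\mapsto\gamma^{*}\omega_{1}$, $\beta\mapsto\gamma^{*}\beta$, $\sigma\mapsto\gamma^{*}\sigma$, $\omega_{s}\mapsto\gamma^{*}\omega_{s}$, $X_{s}\mapsto(\gamma^{-1})_{*}X_{s}$, hence $\psi_{1}\mapsto\gamma^{-1}\circ\psi_{1}\circ\gamma$, so $\Upsilon_{p,v\circ\gamma}=\Xi_{p,v}\circ\gamma\circ\gamma^{-1}\circ\psi_{1}\circ\gamma=\Upsilon_{p,v}\circ\gamma$. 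Smooth dependence of $\Upsilon_{p,v}$ on $(p,v)$ and uniformity of $\varepsilon$ and of every $O(\cdot)$ constant follow from smooth dependence of solutions of ODEs on data together with compactness of $\mathcal{U}$.

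\emph{Rescaled estimates.} Write $\Upsilon_{p,v}^{*}h=h_{0}+e_{p,v}$, where by the above and compactness $|e_{p,v}(z)|\le C|z|$ and $|\partial e_{p,v}(z)|\le C$ on $B^{2n}_{\varepsilon}$ with $C$ uniform. For $t\le\varepsilon/R$ and $z\in B^{2n}_{R}$, using $\mathbf{t}^{*}h_{0}=t^{2}h_{0}$ and $\mathbf{t}^{*}e_{p,v}=t^{2}e_{p,v}(tz)$,
\[
h^{t}_{p,v}(z)=t^{-2}(\Upsilon_{p,v}\circ\mathbf{t})^{*}h=t^{-2}\mathbf{t}^{*}(h_{0}+e_{p,v})=h_{0}+e_{p,v}(tz),
\]
whence $\|h^{t}_{p,v}-h_{0}\|_{C^{0}}\le CRt$ and $\|\partial h^{t}_{p,v}\|_{C^{0}}\le t\sup_{B^{2n}_{\varepsilon}}|\partial e_{p,v}|\le Ct$, which is (5). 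The main obstacle is Steps 2--3: arranging the Darboux/Moser correction to be simultaneously smooth in $(p,v)$, \emph{exactly} $U(n)$-equivariant, and valid on a single fixed radius with uniform constants. Exact equivariance forces the use of only $U(n)$-natural ingredients --- the radial homotopy operator, which commutes with linear substitutions, and the invariant model data $\omega_{0},h_{0}$ --- rather than an arbitrary primitive of $\beta$; the uniformity is precisely where compactness of $M$ (hence of $\mathcal{U}$) enters.
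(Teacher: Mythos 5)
Correct. The paper does not prove this proposition but simply records it from \cite[Prop.~3.2 and Prop.~3.4]{JLS}, and your parametrized exponential-chart-plus-Moser argument (with the radial homotopy operator giving $U(n)$-equivariance, compactness of the frame bundle giving uniform $\varepsilon$ and constants, and the dilation computation giving (5)) is a faithful reconstruction of the argument in that source.
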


\begin{proposition}
\label{JLScharts}\label{charts}Suppose that $M$ is a compact symplectic
manifold with a compatible Riemannian metric $h$. Suppose that $L$ is a
compact Lagrangian submanifold of $M$ with second fundamental form bounded
above by $K$ and volume bounded above by $V_{0}$. Given $c_{n}>0,$ there
exists an $r_{0}=r_{0}(K,c_{n})>0$ and a finite cover of $L$ by Darboux charts
$\Upsilon_{p_{i},v_{i}}$:$B_{r_{0}}^{2n}\rightarrow M$ centered at points
$p_{i}$ on $L$ such that

\begin{enumerate}
\item The connected component of $L\cap B_{r_{0}}^{2n}$ containing $p_{j}$ is
represented by a graph $\left(  x,d\varphi^{(j)}\right)  $ over $B_{r_{0}%
}^{2n}\cap\mathbb{R} ^{n}\times\left\{  0\right\}  $ for some potential
$\varphi^{(j)}$.

\item The tangent plane at each point of this connected component satisfies a
closeness condition with respect to the planes $v_{i}(\mathbb{R}^{n}%
\times\{0\}):$
\begin{equation}
\max_{\substack{\left\vert e\right\vert _{g}=1,\text{ }e\in T_{p}L\\\left\vert
\nu\right\vert _{\delta_{0}}=1,\nu\in\left\{  0\right\}  \times\mathbb{R}^{n}%
}}e\cdot\nu<c_{n} \label{tangentspaceclose}%
\end{equation}
where the dot product is in the euclidean metric $\delta_{0}$, and $c_{n}$ is
a small universal constant (say $c_{n}=\frac{1}{10\sqrt{n}}$) chosen so that
quantities such as the volume element and coordinate expression for $h$ are
bounded by universal constants.

\item The ambient metric $h$ is very close to the euclidean metric, that is
$\|h-\delta_{0}\| < c_{n}$ for some $c_{n}$ (can be the same $c_{n}$ as in (2) above).

\item The submanifold $L$ is covered by the charts obtained by restricting
these charts to $B_{r_{0}}^{2n}( p_{j}) $

\item The number of such points $\left\{  p_{j}\right\}  $ satisfies
\begin{equation}
{N}(K,V_{0})\leq\frac{C(n)V_{0}}{r_{0}^{n}(K, c_{n})}. \label{numberbounded}%
\end{equation}

\end{enumerate}
\end{proposition}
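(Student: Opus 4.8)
The plan is to deduce Proposition~\ref{JLScharts} from Proposition~\ref{Darboux} by carefully choosing the scale $r_0$ and the centers $p_j$. First I would fix a small universal constant $c_n$ (as in the statement, $c_n = \tfrac{1}{10\sqrt n}$ will do) and note that all the desired "closeness" conclusions (2)--(3) follow once the ambient metric in the chart is $C^0$-close to $\delta_0$ \emph{and} the tangent planes of $L$ are nearly horizontal. The first of these is controlled directly by conclusion (5) of Proposition~\ref{Darboux}: working in the rescaled chart $h^t_{p,v}$ with dilation factor $t = r_0/R$ (equivalently, just taking $r_0$ small), we get $\|h - \delta_0\|_{C^0} \leq C_0 r_0$, so choosing $r_0 < c_n/C_0$ gives (3). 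The second requires a quantitative argument: at the center $p_j$ we are free to choose the unitary frame $v_j$ so that $v_j(\mathbb{R}^n\times\{0\}) = T_{p_j}L$ exactly (this is possible precisely because $L$ is Lagrangian, so $T_{p_j}L$ is a Lagrangian subspace and extends to a unitary frame), and then the second fundamental form bound $|A|\le K$ together with the metric estimate forces $T_qL$ to stay within angle $O((K + C_1 r_0) r_0)$ of horizontal for $q$ in the component of $L\cap B_{r_0}$ through $p_j$ — here one integrates the tilt of the tangent plane along paths in $L$, using that the rate of tilting is bounded by $|A|$ plus a connection error term of size $\|\partial h^t\|_{C^0}\le C_1 r_0$. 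Shrinking $r_0$ (depending on $K$ and $c_n$ only) makes this angle $< c_n$, giving (2). Once (2) holds, the connected component through $p_j$ is a graph $x\mapsto (x, u^{(j)}(x))$ over (a domain in) $\mathbb{R}^n\times\{0\}$ by the implicit function theorem; the Lagrangian condition $\Upsilon^*\omega = \omega_0$ then forces the graph to be the graph of a closed $1$-form, and since the domain $B^{2n}_{r_0}\cap(\mathbb{R}^n\times\{0\})$ is a ball (hence simply connected), that closed form is exact: $du^{(j)} = d\varphi^{(j)}$, giving (1).

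For conclusions (4) and (5), the plan is a standard Vitali/packing argument. Consider the collection of balls $\{B_{r_0/5}(p) : p\in L\}$; by the $5r$-covering lemma extract a maximal disjoint subcollection $\{B_{r_0/5}(p_j)\}$, so that $\{B_{r_0}(p_j)\}$ covers $L$, giving (4). To bound the number $N$ of centers, note that the balls $B_{r_0/5}(p_j)$ are disjoint and each satisfies a lower volume bound $\Vol(L\cap B_{r_0/5}(p_j)) \ge c(n)\, r_0^n$: this is exactly the monotonicity-type lower bound for a minimal-ish graph, but here it follows more cheaply because in the chart $L$ is a nearly-horizontal Lipschitz graph through the center, so its intersection with the ball projects onto a definite fraction of the coordinate ball $B^n_{r_0/5}\subset\mathbb{R}^n$ with volume element comparable to $1$ (using (2) and (3)). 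Summing, $N\cdot c(n) r_0^n \le \sum_j \Vol(L\cap B_{r_0/5}(p_j)) \le \Vol(L) \le V_0$, which is (5) with $C(n) = 1/c(n)$.

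The main obstacle I expect is the quantitative tangent-plane estimate needed for (2): one must show that a small bound on $|A|$ and on $r_0$ genuinely localizes the Gauss map of $L$ inside the chart, and this requires being slightly careful that the bound $r_0 = r_0(K, c_n)$ does \emph{not} secretly depend on $V_0$ or on global features of $L$. The clean way is to work in the rescaled coordinates $h^t_{p,v}$ of Proposition~\ref{Darboux}(5): there the rescaled second fundamental form of $L$ is bounded by $r_0 K$ and the metric is $(1 + C_0 r_0)$-bi-Lipschitz to $\delta_0$ with $\|\partial h^t\| \le C_1 r_0$, so the ODE governing the tilt of $T L$ along unit-speed geodesics of $L$ has right-hand side $\le r_0(K + C_1)$; starting from zero tilt at $p_j$ and running for parameter length $\le 2 r_0$ inside the ball keeps the tilt $\le C r_0^2 (K + C_1) < c_n$ once $r_0$ is small depending only on $K, C_1, c_n$. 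Everything else — the graph representation, exactness of the closed form on a ball, and the covering/volume count — is routine once this localization is in hand.
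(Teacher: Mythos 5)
Your proposal follows the same overall strategy as the paper: align the Darboux frame with $T_{p_j}L$, invoke the metric estimates of Proposition~\ref{Darboux}(5) after a fixed rescaling, bound the tilt of the Gauss map of $L$ using $|A|\le K$ plus the Christoffel error to get graphicality and the closeness condition over a ball of radius $r_0(K,c_n)$, note that the Lagrangian condition in a Darboux chart plus simple connectedness gives an exact potential, and finish with a Vitali covering/packing argument for (4)--(5). The paper delegates the tangent-plane and graphicality estimate to a separate appendix result (Proposition~\ref{A-bound}) rather than re-deriving it inline as you do, and it is slightly less explicit than you are about the exactness of the closed $1$-form; those are expositional, not substantive, differences. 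One point worth being careful about: in your Vitali step you should take the balls $B_{r_0/5}(p)$ in the \emph{intrinsic} metric of $L$ (as the paper does), not in the ambient metric of $M$. If the balls are ambient, it is not automatic that every point of $L$ lands in the connected component of $L\cap B_{r_0}(p_j)$ through some center $p_j$ --- two sheets of $L$ that are far apart intrinsically can be close in $M$, so an ambient ball may contain points of $L$ that are not reached by the graph through $p_j$. Using intrinsic balls, a point $q$ with $d_L(q,p_j)<r_0$ is joined to $p_j$ by a path in $L$ of length $<r_0$ which stays inside the chart, so it lies in the graphical component; the volume-packing count is then the same. (Also, a small slip: the accumulated tilt in your rescaled-coordinates version should be of order $r_0(K+C_1)$, not $r_0^2(K+C_1)$ --- one factor of $r_0$ from the rescaled curvature/connection bounds and one from the parameter length, not both, but this does not affect the conclusion that $r_0$ can be chosen depending only on $K$ and $c_n$.)
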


\begin{proof}
At each point $p\in L$ we take a Darboux chart $\Upsilon_{p,v}$ as described
above with that $T_{p}L=\mathbb{R}^{n}\times\left\{  0\right\}  $ in the given
chart. Note that after some fixed re-scalings, we can assert via Proposition
\ref{Darboux} that $\Upsilon_{p,v}$ exists on $B_{\varepsilon_{0}}^{2n}$ and
satisfies any near euclidean metric conditions we choose to prescribe,
including the closeness condition: $\left\vert h-\delta_{0}\right\vert < c$.
Now we may apply Proposition \ref{A-bound} which asserts existence of a ball
$B_{r_{0}} ^{n}(p)\subset\mathbb{R}^{n}\times\{0\}$ over which $L$ is
representable as a graph, with (\ref{tangentspaceclose}) holding. The quantity
$r_{0}$ will depend on $K$.

Consider the compact immersed submanifold $L$ as a metric space $(L,d)$.
\ifthenelse{\boolean{bluecomments}}{\textcolor{blue}{We don't want to use ambient metric as nearby pieces could get caught in each other's metric balls. }}{}
Taking a finite cover of metric balls $B_{r_{0}/4}(p)$ for $p\in L$ and
applying Vitalli's covering Lemma, we conclude that there is a subset of these
points $\{p_{j}\} $ so that $L=\cup_{i}B_{3_{0}/4}(p_{i})$ and $B_{r_{0}%
/4}(p_{i})$ are mutually disjoint. By (\ref{tangentspaceclose}), $L\cap
B_{3r_{0}/4}(p_{i})$ is in the image of a graph given by Proposition
\ref{A-bound}. In particular, the disjoint $B_{r_{0}/4}(p_{i})$'s have a
minimum total volume $\omega_{n}c^{n} r_{0}^{n}$. The bound
(\ref{numberbounded}) on the number of balls follows. As $\{B_{3 r_{0}%
/4}(p_{i})\}$ covers $L$ and each of these balls is contained in a graph over
$B_{ r_{0}}^{n},$ we take the set of the graphs as the cover.
\end{proof}

The scalar functions from the the exact sections of $T^{*}L$ are globally
defined on $L$ via the abstract Weinstein map $\Psi$. We have utilized them to
establish short-time existence and uniqueness for our geometric flow of $F$.
However, for higher order a-priori estimates, we need to set up the flow
equation in a Darboux chart with estimates on the metric as described above.
Fortunately, each $\Upsilon_{p,v}$ is a symplectomorphism, which takes
gradient graphs $(x,d\varphi)$ to Lagrangian submanifolds, so the computations
in section \ref{existence} can be repeated verbatim, with $\Upsilon_{p,v}$ in
place of $\Psi$. In particular, in each chart, the flow is determined by an
equation
\begin{equation}
\varphi_{t}=-g^{ap}g^{ij}\frac{\partial^{4}\varphi}{\partial x^{a}\partial
x^{j}\partial x^{i}\partial x^{p}}+G(x,D\varphi,D^{2}\varphi,D^{3}\varphi).
\label{MM1}%
\end{equation}

\begin{remark}
\label{remark1}A precise computation in Darboux coordinates of the expression
(\ref{ha_1}) gives
\begin{align*}
h\left(  H,Je_{p}\right)   &  =g^{ij}\varphi_{pij}+g^{ij}\tilde{\Gamma}%
_{ij}^{p+n} +g^{ij}\varphi_{kj}\delta^{km}\tilde{\Gamma}_{i,m+n}^{p+n}%
+g^{ij}\varphi_{ki}\delta^{km}\tilde{\Gamma}_{m+n,j}^{p+n}\\
&  +g^{ij}\varphi_{ki}\varphi_{lj}\delta^{km}\delta^{lr}\tilde{\Gamma
}_{m+n,r+n}^{p+n}-g^{ij}\tilde{\Gamma}_{ij}^{q}\varphi_{pq}-g^{ij}\varphi
_{kj}\delta^{km}\tilde{\Gamma}_{i,m+n}^{q}\varphi_{pq}\\
&  -g^{ij}\varphi_{ki}\delta^{km}\tilde{\Gamma}_{m+n,j}^{q}\varphi_{pq}
-g^{ij}\varphi_{ki}\varphi_{lj}\delta^{km}\delta^{lr}\tilde{\Gamma}
_{m+n,r+n}^{q}\varphi_{pq}%
\end{align*}
where $\tilde{\Gamma}_{ij}^{q}$ are Christoffel symbols in the ambient metric
$\left(  M,h\right)  $. Considering that each expression of the form $g^{ab}$
is a smooth function in terms of $D^{2}\varphi$ with dependence on zero order
of $h$ and each $\tilde{\Gamma}_{ij}^{\beta}$ expression depends on $Dh$ and
$h$, one may conclude (after computing $\operatorname{div}(JH)$ as in
(\ref{divJH})) that $G$ can be written as a sum of expressions that are

\begin{enumerate}
\item quadratic in $D^{3}\varphi$ and smooth in $D^{2}\varphi,h$ in a
predetermined way

\item linear in $D^{3}\varphi$, smooth in $D^{2}\varphi,h,Dh$ in a
predetermined way

\item smooth in $D^{2}\varphi,h$ and linear in $D^{2}h$ in a predetermined way

\item smooth in $D^{2}\varphi,h,Dh$ in a predetermined way.
\end{enumerate}
\end{remark}

This allows us to make a claim that there is uniform control on the important
quantities involved in the equation we are solving.


\begin{proposition}
\label{JLScharts2} Suppose that $L$ is a compact Lagrangian manifold with
volume $V_{0}$ and evolves by \eqref{flow0} on $[0,T)$. If the norm of second
fundamental $A$ of $L_{t}$ satisfies $|A|_{g(t)}\leq K$ for $t\in[0,T)$, then
after a fixed rescaling on $M$ there is a finite set of Darboux charts such that

\begin{enumerate}
\item The submanifold is covered by graphs over $B_{1}^{2n}\cap\mathbb{R}
^{n}\times\left\{  0\right\}  $.

\item The submanifold is graphical over $B_{5}^{2n}\cap\mathbb{R}^{n}%
\times\left\{  0\right\}  $ in each chart.

\item The slope bound (\ref{tangentspaceclose}) holds over $B_{5}^{n}(0).$

\item The flow (\ref{flow0}) is governed by (\ref{MM1}) locally in these charts.

\item For each chart, the $G$ from (\ref{MM1}) satisfies a uniform bound on
any fixed order derivatives of $G$ (in terms of all four arguments, not with
respect to $x$ coordinate before embedding.)

\item The number of charts is controlled
\begin{equation}
{N}(K,V_{0})\leq C(K)\frac{V_{0}}{r_{0}^{n}(K)}. \label{howmanycharts}%
\end{equation}

\end{enumerate}
\end{proposition}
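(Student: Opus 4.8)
The plan is to combine Proposition \ref{JLScharts} (the static statement producing a uniformly controlled finite atlas of Darboux charts for a single Lagrangian submanifold with bounded second fundamental form and bounded volume) with the monotonicity of volume along the flow \eqref{flow0}, together with Remark \ref{remark1} (which records the structure of $G$ in a Darboux chart) to upgrade the static chart-count statement into a statement that holds uniformly along the whole flow on $[0,T)$. The point of the proposition is that, although the charts themselves change with $t$, their geometry (radius, slope bounds, closeness of $h$ to $\delta_0$, bounds on derivatives of $G$) is uniform in $t$, and their number is uniformly bounded.

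First I would dispense with the volume issue: since the flow \eqref{flow0} is, by Section 2, the gradient flow of volume, we have $\operatorname{Vol}(L_t)\le\operatorname{Vol}(L_0)=V_0$ for all $t\in[0,T)$, so a single volume bound $V_0$ works for every time slice. Next, fix $t\in[0,T)$. Apply Proposition \ref{JLScharts} to the compact Lagrangian $L_t$ with second fundamental form bound $K$ and volume bound $V_0$ and with the universal constant $c_n$ (say $c_n=\tfrac{1}{10\sqrt n}$): this yields a radius $r_0=r_0(K,c_n)$ depending only on $K$ (not on $t$) and a finite cover of $L_t$ by Darboux charts $\Upsilon_{p_i,v_i}:B^{2n}_{r_0}\to M$ centered at points $p_i\in L_t$, with the graphicality over $B^{2n}_{r_0}\cap(\mathbb{R}^n\times\{0\})$, the slope bound \eqref{tangentspaceclose}, the closeness $\|h-\delta_0\|<c_n$, and the count \eqref{numberbounded}. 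Because $r_0$ and the constant in \eqref{numberbounded} depend only on $K,V_0,n$, the cardinality bound \eqref{howmanycharts} holds uniformly in $t$. Then rescale $M$ once and for all by the fixed factor $5/r_0$ (equivalently, use the dilation $\mathbf t$ from Proposition \ref{Darboux}): this is a fixed rescaling, independent of $t$ since $r_0$ is, and it turns the radius-$r_0$ statement into a radius-$5$ statement, giving items (1)--(3) with $B_1^{2n}$ and $B_5^{2n}$ in place of the original balls; the near-Euclidean conditions (3) and (4)--(5) of Proposition \ref{Darboux} are preserved by this fixed dilation up to adjusting the universal constant $c_n$.

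For item (4), I would invoke the remark preceding Proposition \ref{JLScharts2}: each $\Upsilon_{p,v}$ is a symplectomorphism sending gradient graphs $(x,d\varphi)$ to Lagrangian submanifolds, so the computation of Proposition \ref{flow equation} goes through verbatim with $\Upsilon_{p_i,v_i}$ in place of $\Psi$, and in each chart the connected component of $L_t$ through $p_i$ is the graph of $d\varphi^{(i)}(\cdot,t)$ with $\varphi^{(i)}$ solving \eqref{MM1}. For item (5), I would appeal to Remark \ref{remark1}: there $G$ is written as a universal expression (polynomial in $D^3\varphi$ of degree $\le 2$, smooth in $D^2\varphi$, and smooth in $h,Dh,D^2h$ in a fixed way, with $g^{ab}$ a fixed smooth function of $D^2\varphi$ and of $h$ to zeroth order); since the slope bound \eqref{tangentspaceclose} forces $|D^2\varphi|$ (hence $g^{ab}$) to be bounded and bounded away from degeneracy by universal constants, and since $M$ is compact so $h,Dh,D^2h$ and all higher derivatives are bounded, any fixed-order derivative of $G$ in its four arguments is bounded by a constant depending only on $K$, the ambient geometry of $M$ (after the fixed rescaling), and the order — uniformly in $t$ and in the chart index. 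This gives (5), and (6) is just \eqref{numberbounded} after the fixed rescaling.

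**The main obstacle** I anticipate is not any single hard estimate but the bookkeeping needed to make "uniform in $t$" genuinely uniform: one must check that every constant produced by Proposition \ref{JLScharts} and by Remark \ref{remark1} truly depends only on $K$, $V_0$, $n$, and the fixed ambient data, and never on the particular time slice or on which connected component of $L_t\cap B^{2n}_{r_0}$ one is looking at (recall $L_t$ may be only immersed, so different sheets through a chart can occur). The cleanest way to handle this is to note that Proposition \ref{JLScharts} is already stated for immersed $L$ with all constants of the advertised form, and that the structural description of $G$ in Remark \ref{remark1} is manifestly $t$-independent once the slope bound and compactness of $M$ are in force; the dilation $\mathbf t$ from Proposition \ref{Darboux}(5) is what keeps the rescaled metric $C^0$- and $C^1$-close to $h_0$ with the \emph{same} universal constants after rescaling. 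Apart from this, the argument is a direct transcription of the static Proposition \ref{JLScharts} to each time slice plus the observation that its output is $t$-uniform.
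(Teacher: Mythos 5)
Your proposal is correct and follows essentially the same route as the paper: rescale $M$ by the fixed factor determined by $r_0(K)$, apply Proposition~\ref{JLScharts} at a fixed time slice to get the covering count, and invoke Remark~\ref{remark1} (plus the verbatim transcription of Proposition~\ref{flow equation} with $\Upsilon_{p,v}$ in place of $\Psi$) to control $G$. The paper's own proof is a three-sentence summary of exactly these steps; your version just fills in the details, including the (correct, and implicitly used) observation that volume monotonicity along \eqref{flow0} makes the bound $V_0$ uniform in $t$.
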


\begin{proof}
Rescale $M$ so that $r_{0} = 5.$ Then the expression $G$ becomes predictably
controlled by Remark \ref{remark1}. Choosing a cover with interior balls, as
in the proof of Proposition \ref{JLScharts}, determines the necessary number
of balls.
\end{proof}


\subsection{Localization}

Let $L_{t}$ evolve by (\ref{flow0}) \ with time $t\in\lbrack0,T),$ and assume
$\left\vert A\right\vert _{g(t)}\leq K$ for all $L_{t}.$ Our goal is to
establish integral bounds for $\left\vert \nabla^{l}A\right\vert _{g(t)}^{2},$
which only depend on $k,K,M$ and the initial volume $V_{0}$ of $L_{0}.$ To
derive the differential inequality (\ref{cp11result}) at any time $t_{0}$, we
use Proposition \ref{JLScharts2} and express geometric quantities
$g,A,\nabla^{l}A$, etc., in the (no more than $N)$ Darboux charts in terms of
$\varphi(x,t)$ for $x\in B_{5}^{n}.$ \ By compactness of $L$ and smoothness of
the flow, the flow will continue to be described by graphs of $d\varphi(x,t)$
in this open union of $N$ charts for $t\in\lbrack t_{0},t_{1})$ for some
$t_{1}>t_{0}.$ \ 


To be precise, each of the Darboux charts in Lemma \ref{JLScharts2} has a
product structure; we may assume that each chart contains coordinates
$B_{4}^{n}(0)\times B_{2}^{n}(0)$ so that $L$ is graphical over $B_{5}^{n}(0)$
and further that the collection of $B_{1}^{n}(0)\times B_{1}^{n}(0)$ covers
$L$. Now we may fix once and for all a function $\eta$ which is equal to $1$
on $B_{1}^{n}(0)\times B_{1}^{n}(0)$ and vanishes within $B_{2}^{n}(0)\times
B_{2}^{n}(0)$. For a given chart $\Upsilon^{\alpha}$ (here $\alpha\in\left\{
1,..N\right\}  $ indexes our choice of charts) we call the function
$\eta_{\alpha}$. This function will have uniformly bounded dependence on the
variables $x$ and $y$ in the chart.

Now once these $\eta_{\alpha}$ are chosen, we may then define a partition of
unity for the union of charts which form a tubular neighborhood of $L$, which
will restrict to a partition of unity for small variations of $L$:
\begin{equation}
\rho_{\alpha}^{2}:=\frac{\eta_{\alpha}^{2}}{\sum\eta_{\alpha}^{2}}.
\label{bump}%
\end{equation}
By compactness of the unit frame bundle and the smoothness of the family of
charts defined in\ Proposition \ref{Darboux}, the transition functions between
charts will have bounded derivatives to any order. Thus, in a fixed chart,
where a piece of $L$ is represented as $\left\{  \left(  x,d\varphi(x)\right)
:x\in B_{1}(0)\right\}  $, the dependence of $\rho_{\alpha}^{2}$ will be
uniformly controlled in terms of these variables, so there is a uniform
pointwise bound
\begin{equation}
\left\vert D_{x}^{2}\rho_{\alpha}^{2}\right\vert \leq C(D^{3}\varphi
,D^{2}\varphi, D\varphi, x) \label{rhobound}%
\end{equation}
were this dependence is at most linear on $D^{3}\varphi$. We will be using the
$x$ coordinates as charts for $L.$

Note also that, if we have a uniform bound on $\frac{d}{dt}D\varphi$ and
$\frac{d}{dt}D^{2}\varphi$ we can conclude a positive lower bound on
$t_{1}-t_{0};$ the flow will be described by graphs of $\varphi(x,t)$ in these
$N$ charts, and the condition (\ref{tangentspaceclose}) will be satisfied for
a slightly larger $c_{n}^{\prime}$ (say $c_{n}^{\prime}=\frac{1}{5\sqrt{n}}$
instead of $c_{n}=\frac{1}{10\sqrt{n}})$.

\subsubsection{Expression for metric and second fundamental form}

In the Darboux charts for $M$, the manifold $L$ is expressed graphically over
the $x$ coordinate via
\[
x\mapsto F(x)=\left(  x,d\varphi(x)\right)  .
\]
Thus we have a tangential frame:
\begin{equation}
e_{i}=\partial_{x^{i}}F=E_{i}+\varphi_{ik}\delta^{km}E_{m+n} \label{bdd}%
\end{equation}
with
\[
g_{ij}=h_{ij}+\varphi_{ik}\delta^{km}\varphi_{jl}\delta^{lr}h_{\left(
m+n\right)  \left(  l+n\right)  }+\varphi_{jl}\delta^{lr}h_{\left(  i\right)
\left(  l+n\right)  }+\varphi_{ik}\delta^{km}h_{\left(  m+n\right)  \left(
j\right)  }.
\]
Recalling (2) and (3) in Proposition \ref{JLScharts} we may assume that the
expression of $h$ in these coordinates is very close to $\delta_{ij}$ and that
$D^{2}\varphi$ is not large.

Differentiating the components of the induced metric gives
\begin{multline}
\partial_{x^{p}}g_{ij}=\text{function of }\left(  x,D\varphi\right)
\label{aaa}\\
+\text{Terms involving up to three factors of }D^{2}\varphi\text{ but no
higher}\nonumber\\
+\text{Terms involving up to two factors of }D^{2}\varphi\text{ and one factor
of }D^{3}\varphi.\nonumber
\end{multline}

\begin{lemma}
\label{A2B2}In a Darboux chart, using the coordinate basis (\ref{bdd}) for the
tangent space and $\left\{  Je_{l}\right\}  $ for the normal space, the
covariant derivatives of the second fundamental form and of the potential
$\varphi$ are related by
\begin{equation}
\nabla^{k-1}A=D^{k+2}\varphi+S_{k} \label{SBA}%
\end{equation}
where $S_{1}$ is a smooth controlled function depending on the chart, $h$ and
$D^{2}\varphi$, $S_{2}$ depends also on $D^{3}\varphi$ and for $k\geq3:$

\begin{enumerate}
\item Each $S_{k}$ is a sum of of multilinear forms of $D^{4}\varphi
,...,D^{k+1}\varphi$

\item The coefficients of these forms are functions of $\left(  x,D\varphi
,D^{2}\varphi,D^{3}\varphi\right)  $

\item The total sum of the derivatives of $D^{3}\varphi$ that occur in a given
term is no more than $k-2.$
\end{enumerate}

(Note that (\ref{SBA}) is interpreted as literal equality of the symbols in
the choice of basis, not simply \textquotedblleft up to a smooth function")
\end{lemma}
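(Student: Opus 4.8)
The plan is to prove \eqref{SBA} by induction on $k$, tracking carefully how each covariant derivative of $A$ produces exactly one more coordinate derivative of $\varphi$ at top order, plus lower-order "junk" that we bundle into $S_k$. The base case is the computation of $A$ itself. From Remark \ref{remark1} (and the computation of $H^a$ in the proof of Proposition \ref{flow equation}), the second fundamental form components $A_{ij}^{p} = h(\nabla_{e_i} e_j, Je_p)$ — in the basis $e_i = E_i + \varphi_{ik}\delta^{km}E_{m+n}$ for the tangent space and $\{Je_l\}$ for the normal space — take the form $\varphi_{pij}$ plus terms that are smooth in $(x, D\varphi, D^2\varphi)$ and at most linear in $D^2 h$; here $D^2 h$ itself is a controlled function of the chart. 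So $A = D^3\varphi + S_1$ with $S_1$ as described; similarly one more coordinate derivative (needed to compute $\nabla A$) gives $\nabla A = D^4\varphi + S_2$, where $S_2$ now also sees $D^3\varphi$ through the Christoffel symbols $\Gamma$ of $g$ (which are first order in $D g$, hence involve $D^3\varphi$). This pins down $S_1$ and $S_2$ and sets up the inductive pattern.

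For the inductive step, suppose $\nabla^{k-1}A = D^{k+2}\varphi + S_k$ with the stated structure. I would compute $\nabla^{k}A = \nabla(\nabla^{k-1}A)$ in the coordinate basis: each covariant derivative $\nabla_{e_a}$ acting on a tensor expressed in the $e_i$/$Je_l$ basis contributes (i) the coordinate partial $\partial_{x^a}$ applied to each component, and (ii) Christoffel corrections $\Gamma_{a\cdot}^{\cdot}$ (for the induced connection on $TL$) and the corresponding connection terms for the normal bundle contracted against the components. Term (i) applied to the top piece $D^{k+2}\varphi$ yields exactly $D^{k+3}\varphi$, the new top order. Term (i) applied to $S_k$, together with term (ii) applied to both $D^{k+2}\varphi$ and $S_k$, must all be shown to land in $S_{k+1}$. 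The key bookkeeping: $\partial_{x^a}$ of a multilinear form in $D^4\varphi,\dots,D^{k+1}\varphi$ with coefficients smooth in $(x,D\varphi,D^2\varphi,D^3\varphi)$ either raises one factor by a derivative (increasing the "total derivatives of $D^3\varphi$" count by one, from $\le k-2$ to $\le k-1$, as allowed for $S_{k+1}$), or differentiates a coefficient — but $\partial_{x^a}$ of a function of $(x,D\varphi,D^2\varphi,D^3\varphi)$ produces at worst a factor of $D^4\varphi$, again increasing the count by one and staying within the allowed range $D^4\varphi,\dots,D^{k+2}\varphi$. The Christoffel symbols $\Gamma$ of $g$ are, by \eqref{aaa}, smooth in $(x,D\varphi,D^2\varphi,D^3\varphi)$ with the $D^3\varphi$-dependence at most linear, so multiplying $D^{k+2}\varphi$ by a $\Gamma$ gives a term that is linear in $D^{k+2}\varphi$ with a coefficient of exactly the allowed type (derivative count $\le 1 \le k-1$ once $k\ge 2$, consistent with $S_{k+1}$). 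One also checks that $\Gamma \cdot S_k$ respects the count: $S_k$ carries $\le k-2$ and $\Gamma$ carries $\le 1$, giving $\le k-1$. Finally, one verifies the multilinearity claim (1) and the coefficient claim (2) are preserved — these follow because all operations above are polynomial in the high derivatives with coefficients of the prescribed functional form.

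The main obstacle I anticipate is purely combinatorial bookkeeping rather than any genuine analytic difficulty: namely, keeping the "total derivatives of $D^3\varphi$ is $\le k-2$" invariant honest through the inductive step, since a single $\nabla$ must be shown to raise this count by at most one no matter whether it hits a high-order factor, a coefficient, or gets converted into a Christoffel symbol. A clean way to organize this is to assign to each monomial a weight equal to $(\text{sum over factors of } (\text{order} - 3))$ for factors of order $\ge 3$, plus the orders of derivatives appearing in coefficients, and observe that $\partial_{x^a}$, multiplication by $\Gamma_g$, and multiplication by normal-connection coefficients each raise the weight by exactly $1$ (or leave it unchanged when hitting the $x$-dependence directly), while producing at most one new factor. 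Granting this weight calculus, the three structural assertions about $S_k$ are immediate, and \eqref{SBA} follows; note that the statement is an \emph{identity} of symbolic expressions in the chosen basis, so no estimates enter at this stage — the uniform control of the coefficients (needed later) is exactly what Proposition \ref{JLScharts2}(5) and Remark \ref{remark1} supply.
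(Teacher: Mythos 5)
Your proof takes essentially the same route as the paper: compute $A$ and $\nabla A$ explicitly to establish $S_1$, $S_2$, and then induct using the product rule and the observation that the induced Christoffel symbols are controlled functions of $(x,D\varphi,D^2\varphi,D^3\varphi)$, so that each application of $\nabla$ raises the top order by one and keeps the ``total derivative count'' invariant within the allowed bound. Your ``weight calculus'' is a clean formalization of the same bookkeeping the paper carries out by inspection. Two small inaccuracies worth flagging, neither fatal: the base case for $A$ only sees the ambient Christoffel symbols $\tilde\Gamma$, hence $Dh$ (not $D^2h$ --- that appears only in $\operatorname{div}(JH)$, cf.\ Remark~\ref{remark1}); and a factor of $\Gamma$ contributes $0$, not $\le 1$, to the derivative count in the paper's convention (since $D^3\varphi$ itself counts as zero derivatives of $D^3\varphi$), though this miscount only makes your bound look tighter than needed and does not break the induction.
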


\begin{proof}
Starting with $k=1$, differentiate in the ambient space
\[
\tilde{\nabla}_{e_{i}}e_{j}=\tilde{\Gamma}_{ji}^{\beta}E_{\beta}+\varphi
_{jmi}\delta^{mk}E_{n+k}+\varphi_{jm}\delta^{mk}\tilde{\Gamma}_{n+k,i}^{\beta
}E_{\beta}+\varphi_{jm}\varphi_{ri}\delta^{mk}\delta^{rl}\tilde{\Gamma
}_{n+k,n+l}^{\beta}E_{\beta}.
\]
Using $e_{j}$ and $Je_{k}$ as frame and normal frame,
\begin{align*}
A_{ijl}  &  =\langle\tilde{\nabla}_{e_{i}}e_{j},Je_{l}\rangle\\
&  =\omega(e_{l},\varphi_{jmi}\delta^{mk}E_{n+k})+\omega(e_{l},\tilde{\Gamma
}_{ji}^{\beta}E_{\beta}+\varphi_{jm}\delta^{mk}\tilde{\Gamma}_{n+k,i}^{\beta
}E_{\beta}+\varphi_{jm}\varphi_{ri}\delta^{mk}\delta^{rl}\tilde{\Gamma
}_{n+k,n+l}^{\beta}E_{\beta})\\
&  =\varphi_{jli}+S_{1}%
\end{align*}
where $S_{1}$ is a smooth function involving $D^{2}\varphi$ and the ambient
Christoffel symbols at $\left(  x,D\varphi\right)  $ and recalling
\[
\omega(e_{l},E_{\beta})=\omega(E_{l}+\varphi_{lk}\delta^{jk}E_{j+n},E_{\beta
})=\left\{
\begin{array}
[c]{cl}%
\delta_{sl}, & \ \ \ \text{ if }\beta=s+n\text{ for }s\in\left\{
1,...,n\right\} \\
-\varphi_{sl}, & \,\,\,\,\text{ if }\beta=s\in\left\{  1,...,n\right\}  .
\end{array}
\right.
\]
Now for $k=2$ ($\nabla$ denotes covariant derivatives on the submanifold):
\[
\left(  \nabla A\right)  _{pijl}=\partial_{p}A_{ijl}-A\left(  \nabla_{e_{p}
}e_{i},e_{j},e_{l}\right)  -A\left(  e_{i},\nabla_{e_{p}}e_{j},e_{l}\right)
-A\left(  e_{i},e_{j},\nabla_{e_{p}}e_{l}\right)  .
\]
Now we can compute the Christoffel symbols with respect to the induced metric
$g$:
\[
\nabla_{e_{p}}e_{i}=1\ast D^{3}\varphi\text{ + lower order }
\]
thus
\begin{align*}
\left(  \nabla A\right)  _{pijl}  &  =\varphi_{jlip}+\partial_{x^{p}}%
S_{1}-A\ast\left(  D^{3}\varphi\right)  +\text{lower order}\\
&  =\varphi_{jlip}+D^{3}\varphi\ast D^{3}\varphi+1\ast D^{3}\varphi+\text{
smooth in other arguments.}%
\end{align*}
Here and in sequel, we use $A\ast B$ to denote a predictable linear
combination of terms from tensors $A$ and $B$, and $1\ast T$ to be a
predictable linear combination of $T.$ \ \ Now
\begin{align*}
\nabla^{2}A  &  =D^{5}\varphi+D^{4}\varphi\ast D^{3}\varphi+\text{lower
order}\\
\nabla^{3}A  &  =D^{6}\varphi+D^{5}\varphi\ast D^{3}\varphi+D^{4}\varphi\ast
D^{4}\varphi\text{ + lower order}%
\end{align*}
and so forth.
The result follows by inductively applying the product rule and noting
\[
\nabla^{k-1}A=D(\nabla^{k-2}A)+D^{3}\varphi\ast\nabla^{k-2}A\text{ + lower
order}%
\]
by the formula for covariant derivative.

\end{proof}

\subsection{Integral inequalities}

We will use $\Vert\cdot\Vert_{\infty}$ for the supremum norm in the euclidean
metric $\delta_{0}$ and
\[
\left\vert D^{m}\varphi\right\vert _{g}^{2}=g^{i_{1}j_{1}}g^{i_{2}j_{2}%
}...g^{i_{m}j_{m}}\varphi_{i_{1}...i_{m}}\varphi_{j_{1}...j_{m}}%
\]
to denote the norm squared with respect to $g$ for the locally defined
$m$-tensor $D^{m}\varphi$ instead of the higher covariant derivative tensor
$\nabla^{m}\varphi$. We find that this makes computations on the chosen
Darboux chart more transparent. Note that since $g$ is close to $\delta_{0}$
on the chart (with estimates on errors)
\[
\frac{\left\vert D^{m}\varphi\right\vert _{g}^{2}}{\left\vert D^{m}%
\varphi\right\vert _{\delta_{0}}^{2}}\in(1-c_{n},1+c_{n})
\]
and
\[
(1-c_{n}) dx\leq dV_{g}\leq\left(  1+c_{n}\right)  dx
\]
for some small $c_{n}$. Thus we may regard as equivalent estimates on
integrals against $dx$ and integrals against $dV_{g}$, provided the quantities
we are integrating are nonnegative. However, if the quantity being integrated
is not known to be non-negative, we have to be precise in performing estimates.

All estimates below implicitly depend on $c_{n},$ but $c_{n}$ need not be
tracked closely: it need not be close to zero.

\subsubsection{Interpolation inequalities}

We use Gagliardo-Nirenberg interpolation to derive integral inequalities that
allow us to integrate multilinear combinations of higher derivatives of
$\varphi$. For simplicity of notation, we will use $C$ for uniform constants
with dependence indicated in its arguments. For our application, we give
interpolations for different range of indices.


\begin{lemma}
\label{Gagliardo-Nirenberg} Let $\xi$ be a smooth compactly supported
vector-valued function on $\mathbb{R}^{n}$.

\begin{enumerate}
\item If $j_{1}+j_{2}+j_{3}+...+j_{q}=m, $ then
\[
\int\left\vert D^{j_{1}}\xi\ast D^{j_{2}}\xi...\ast D^{j_{q}}\xi\right\vert
^{2}\leq C\left\Vert \xi\right\Vert _{\infty}^{2q-2}\int\left\vert D^{m}%
\xi\right\vert ^{2}.
\]

\item If $j_{1}+j_{2}+j_{3}+...+j_{q}+j^{\ast}=2\tilde{m}$ where $j_{q}%
<\tilde{m}$ and $j^{\ast}\geq0$, then
\[
\int\left\vert D^{j_{1}}\xi\ast D^{j_{2}}\xi...\ast D^{j_{q}}\xi\right\vert
\leq C\left\Vert \xi\right\Vert _{\infty}^{q-\left(  2-j^{\ast}/2\tilde
{m}\right)  }\left(  \frac{2\tilde{m}-j^{\ast}}{2\tilde{m}}\int\left\vert
D^{\tilde{m}}\xi\right\vert ^{2}+\frac{j^{\ast}}{2\tilde{m}}\left\Vert
\chi_{\text{supp}\left(  \xi\right)  }\right\Vert _{p^{\ast}}^{2\tilde
{m}/j^{\ast}}\right)  .
\]

\item If $j_{1}+...+j_{r}=2\bar{m}+1 $ and all $j_{i}\leq\bar{m},$ then for
$\varepsilon>0$
\[
\int\left\vert D^{j_{1}}\xi\ast D^{j_{2}}\xi...\ast D^{j_{r}}\xi\right\vert
\leq\varepsilon\int\left\vert D^{\bar{m}+1}\xi\right\vert ^{2}+C(\varepsilon
,\bar{m},\Vert\xi\Vert_{\infty})\,\left(  \int\left\vert D^{\bar{m}}%
\xi\right\vert ^{2}+\int\chi_{\text{supp}\left(  f\right)  }\right)  .
\]

\end{enumerate}
\end{lemma}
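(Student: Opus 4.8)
The plan is to prove the three interpolation statements as consequences of the classical Gagliardo--Nirenberg inequalities together with H\"older's inequality, handling the three cases by a common strategy: bound each multilinear product pointwise by a product of powers of the $L^{\infty}$ norm of $\xi$ and of $\|D^{m'}\xi\|$-type quantities via GN, then absorb or interpolate as dictated by the exponent bookkeeping.

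For part (1), I would first recall the standard Gagliardo--Nirenberg inequality: if $j \le m$ then $\|D^{j}\xi\|_{L^{2m/j}} \le C \|\xi\|_{\infty}^{1-j/m}\|D^{m}\xi\|_{L^{2}}^{j/m}$ for compactly supported $\xi$. Applying H\"older's inequality to the product $D^{j_{1}}\xi \ast \cdots \ast D^{j_{q}}\xi$ with exponents $p_{i} = 2m/j_{i}$ (which satisfy $\sum 1/p_{i} = \sum j_{i}/(2m) = 1/2$ since $\sum j_{i} = m$), one gets
\[
\int |D^{j_{1}}\xi \ast \cdots \ast D^{j_{q}}\xi|^{2} \le C \prod_{i=1}^{q}\|D^{j_{i}}\xi\|_{L^{2m/j_{i}}}^{2} \le C \prod_{i=1}^{q}\|\xi\|_{\infty}^{2(1-j_{i}/m)}\|D^{m}\xi\|_{L^{2}}^{2j_{i}/m}.
\]
Collecting exponents: the power of $\|\xi\|_{\infty}$ is $2\sum(1-j_{i}/m) = 2q-2$, and the power of $\|D^{m}\xi\|_{L^{2}}$ is $2\sum j_{i}/m = 2$, giving exactly the claimed bound. (One must also note that some $j_{i}$ may be $0$, in which case the corresponding factor is just $\|\xi\|_{\infty}$ with no derivative term, consistent with the convention $\|\xi\|_{\infty}^{1-0}\|D^{m}\xi\|^{0}$.)

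For part (2), the exponents no longer sum to give $1/2$: with $\sum j_{i} = 2\tilde m - j^{\ast}$ and the constraint $j_{q} < \tilde m$, I would use GN with base exponent $2\tilde m$, i.e. $\|D^{j_{i}}\xi\|_{L^{2\tilde m/j_{i}}} \le C\|\xi\|_{\infty}^{1-j_{i}/\tilde m}\|D^{\tilde m}\xi\|_{L^{2}}^{j_{i}/\tilde m}$, together with H\"older using these exponents plus one extra slot of exponent $p^{\ast} = 2\tilde m/j^{\ast}$ for the indicator $\chi_{\mathrm{supp}(\xi)}$ (the ``missing mass''); since $\sum j_{i}/(2\tilde m) + j^{\ast}/(2\tilde m) = 1$, H\"older applies to bound $\int |D^{j_{1}}\xi\ast\cdots\ast D^{j_{q}}\xi|$. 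This yields $\|\xi\|_{\infty}$ to the power $\sum(1-j_{i}/\tilde m) = q - (2\tilde m - j^{\ast})/\tilde m = q - 2 + j^{\ast}/\tilde m$, hmm — I must recheck against the stated exponent $q-(2-j^{\ast}/2\tilde m)$; the discrepancy of a factor $2$ in the $j^{\ast}$-term will be reconciled by splitting $\|D^{\tilde m}\xi\|_{L^{2}}^{\sum j_{i}/\tilde m} = \|D^{\tilde m}\xi\|_{L^2}^{(2\tilde m - j^{\ast})/\tilde m}$, noting $j_{q}<\tilde m$ forces the total exponent on $\|D^{\tilde m}\xi\|$ to be $< 2$, so Young's inequality $ab \le \frac{2\tilde m - j^{\ast}}{2\tilde m}a^{2\tilde m/(2\tilde m - j^{\ast})}\cdot\frac{j^{\ast}}{2\tilde m}b^{2\tilde m/j^{\ast}}$-type splitting converts the product of powers into the displayed convex combination, redistributing the $\|\xi\|_{\infty}$ power accordingly to land on $q-(2-j^{\ast}/2\tilde m)$. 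The bookkeeping here is the main obstacle: one must be careful that the exponent on $\|D^{\tilde m}\xi\|_{L^2}$ is strictly less than $2$ so that Young's inequality produces $\int|D^{\tilde m}\xi|^{2}$ to the first power (not a higher power, which would be useless for absorption), and that the leftover is a power of $\|\chi_{\mathrm{supp}(\xi)}\|_{p^{\ast}}$ as stated.

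For part (3), $\sum j_{i} = 2\bar m + 1$ is odd, so I would interpolate ``between'' $D^{\bar m}\xi$ and $D^{\bar m+1}\xi$: apply GN with the fact that each $j_{i} \le \bar m < \bar m + 1$ to write $\|D^{j_{i}}\xi\|_{L^{q_{i}}} \le C\|\xi\|_{\infty}^{1-\theta_{i}}\|D^{\bar m+1}\xi\|_{L^{2}}^{\theta_{i}}$ with $\theta_{i} = j_{i}/(\bar m+1)$ and $q_{i} = 2(\bar m+1)/j_{i}$; then $\sum 1/q_{i} = (2\bar m+1)/(2(\bar m+1)) < 1$, leaving room for the indicator in H\"older, and the total power on $\|D^{\bar m+1}\xi\|_{L^{2}}$ is $\sum\theta_{i} = (2\bar m+1)/(\bar m+1) < 2$, so Young's inequality with the $\varepsilon$-parameter splits this into $\varepsilon\int|D^{\bar m+1}\xi|^{2}$ plus a constant $C(\varepsilon,\bar m,\|\xi\|_{\infty})$ times a lower-order term, which after another application of part (1) or (2) to $\int|D^{\bar m}\xi|^{2}$-level products (or directly, since $\bar m \le \bar m$) becomes $C(\varepsilon,\bar m,\|\xi\|_{\infty})(\int|D^{\bar m}\xi|^{2} + \int\chi_{\mathrm{supp}(f)})$. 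In all three parts the only genuine input is the scalar-valued Gagliardo--Nirenberg inequality on $\mathbb{R}^{n}$ for compactly supported functions (applied componentwise, the $\ast$ meaning only that we bound the pointwise norm of the contraction by the product of the norms of the factors), plus H\"older and Young; I expect the exponent arithmetic in part (2)—tracking exactly where the factor of $2$ in $j^{\ast}/2\tilde m$ comes from and confirming the convex-combination form—to be the part that needs the most care.
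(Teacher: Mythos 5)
Your approach for parts (1) and (2) is the same as the paper's: write $p_i = 2m/j_i$ (respectively $p_i=2\tilde m/j_i$, $p^\ast=2\tilde m/j^\ast$), apply generalized H\"older, then the Gagliardo--Nirenberg interpolation with $\theta_i=j_i/m$ (resp.\ $j_i/\tilde m$) and, for (2), Young's inequality to convert the fractional powers of $\|D^{\tilde m}\xi\|_{L^2}$ and $\|\chi\|_{p^\ast}$ into the stated convex combination.

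For part (3) you take a genuinely different and more economical route. The paper splits the indices into two groups $j_1+\dots+j_s=a$ and $j_{s+1}+\dots+j_r=b$ with $a\le\bar m\le\bar m+1\le b$, interpolating the first group against $D^{\bar m}\xi$ and the second against $D^{\bar m+1}\xi$; Young's inequality then naturally produces both $\varepsilon\int|D^{\bar m+1}\xi|^2$ and $C(\varepsilon)\int|D^{\bar m}\xi|^2$. You instead interpolate every factor against $D^{\bar m+1}\xi$ alone (with $\theta_i=j_i/(\bar m+1)$, $q_i=2(\bar m+1)/j_i$), use the slack $\sum 1/q_i<1$ for the indicator, and then one Young step gives $\varepsilon\int|D^{\bar m+1}\xi|^2+C(\varepsilon)\|\chi\|_{L^{2(\bar m+1)}}^{2(\bar m+1)}=\varepsilon\int|D^{\bar m+1}\xi|^2+C(\varepsilon)\int\chi_{\mathrm{supp}}$. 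Since the inequality is one-sided, the $C\int|D^{\bar m}\xi|^2$ term on the right is a harmless add-on; your bound is actually stronger. The trailing remark about ``another application of part (1) or (2) to $\int|D^{\bar m}\xi|^2$-level products'' is superfluous and slightly confused --- after Young there is no product left, only the power of $\|\chi\|$, which directly yields the $\int\chi_{\mathrm{supp}}$ term.

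Two small remarks. First, in part (2) the $\|\xi\|_\infty$ exponent you compute, $q-2+j^\ast/\tilde m$, is what the proof (yours and the paper's) actually produces; Young's inequality is applied only to the factors $\|D^{\tilde m}\xi\|_{L^2}^{(2\tilde m-j^\ast)/\tilde m}\cdot\|\chi\|_{p^\ast}$ and leaves the $\|\xi\|_\infty$ power untouched, so your announced ``reconciliation'' to $q-2+j^\ast/(2\tilde m)$ does not happen. This looks like a typographical slip in the lemma statement; the discrepancy is immaterial for the applications since $\|\xi\|_\infty$ is uniformly controlled. Second, it is worth stating explicitly that the degenerate indices $j_i=0$ are handled by $p_i=\infty$ and the convention $\theta_i=0$, as you note in passing for part (1).
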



\begin{proof}
For (1), use $p_{i}=\frac{m}{j_{i}}$ and apply the generalized H\"{o}lder's
inequality
\[
\int\left\vert D^{j_{1}}\xi\ast D^{j_{2}}\xi...\ast D^{j_{q}}\xi\right\vert
^{2}\leq\left\Vert D^{j_{1}}\xi\right\Vert _{2p_{1}}^{2}...\left\Vert
D^{j_{q}}\xi\right\Vert _{2p_{q}}^{2}%
\]
and then use the Gagliardo-Nirenberg interpolation inequality (cf.
\cite[Theorem 1.1]{MR4237368}) with $\theta_{i}=\frac{j_{i}}{m}$.
\ifthenelse{\boolean{bluecomments}}{\textcolor{blue}{
For reference
\[
\left\Vert D^{j}f\right\Vert _{p}\leq C\left\Vert D^{m}f\right\Vert
_{r}^{\theta}\left\Vert f\right\Vert _{q}^{1-\theta}\]
for
\[
\frac{1}{p}=\frac{j}{n}+\theta\left(  \frac{1}{r}-\frac{m}{n}\right)
+\frac{1-\theta}{q}\]
}}{}

For (2), taking $p_{i}=\frac{2\tilde{m}}{j_{i}}$ and $p^{\ast}=\frac
{2\tilde{m}}{j^{\ast}}$\ if $j^{\ast}>0$, then\
\[
\int\left\vert D^{j_{1}}\xi\ast D^{j_{2}}\xi...\ast D^{j_{q}}\xi\right\vert
\leq\left\Vert D^{j_{1}}\xi\right\Vert _{p_{1}}...\left\Vert D^{j_{q}}%
\xi\right\Vert _{p_{q}}\left\Vert \chi_{\text{supp}\left(  \xi\right)
}\right\Vert _{p^{\ast}}.
\]
Now apply the Gagliardo-Nirenberg interpolation inequality with $\theta
_{i}=\frac{j_{i}}{\tilde{m}}$, applying Young's inequality if $j^{\ast}>0$.

For (3), we may split, with $a\leq\bar{m}\leq\bar{m}+1\leq b$
\begin{align*}
j_{1}+...+j_{s}  &  =a\\
j_{s+1}+...+j_{r}  &  =b.
\end{align*}
Now for some $p,q$ conjugates to be determined, let%
\[
p_{i}=\left\{
\begin{array}
[c]{cl}%
\frac{ap}{j_{i}}, & \ \ \ \text{ if }i\in\left\{  1,...,s\right\} \\
\frac{bq}{j_{i}}, & \,\,\,\,\text{ if }i\in\left\{  s+1,...,r\right\}  .
\end{array}
\right.
\]
\ifthenelse{\boolean{bluecomments}}{\textcolor{blue}{
Note that
\begin{align*}
\sum\frac{1}{p_{i}}  &  =\sum_{1}^{s}\frac{j_{i}}{ap}+\sum_{s+1}^{q}\frac{j_{i}}{bq} =\frac{1}{ap}a+\frac{1}{bq}b=1.
\end{align*}}}{} Apply the generalized H\"{o}lder's inequality
\begin{equation}
\int\left\vert D^{j_{1}}\xi\ast D^{j_{2}}\xi...\ast D^{j_{r}}\xi\right\vert
\leq\left\Vert D^{j_{1}}\xi\right\Vert _{p_{1}}...\left\Vert D^{j_{q}}%
\xi\right\Vert _{p_{r}}. \label{i31}%
\end{equation}
We have from the Gagliardo-Nirenberg interpolation inequality%
\[
\left\Vert D^{j_{i}}\xi\right\Vert _{p_{i}}\leq\left\{
\begin{array}
[c]{cl}%
\left\Vert D^{\bar{m}}\xi\right\Vert _{\frac{ap}{\bar{m}}}^{\frac{j_{i}}%
{\bar{m}}}\left\Vert \xi\right\Vert _{\infty}^{1-\frac{j_{i}}{\bar{m}}} &
\ \ \ \text{ if }i\in\left\{  1,...,s\right\}  \text{ with }\theta_{i}%
=\frac{j_{i}}{\bar{m}}\\
\left\Vert D^{\bar{m}+1}\xi\right\Vert _{\frac{bq}{\bar{m}+1}}^{\frac{j_{i}%
}{\bar{m}+1}}\left\Vert \xi\right\Vert _{\infty}^{1-\frac{j_{i}}{\bar{m}+1}%
}, & \,\,\,\,\text{ if }i\in\left\{  s+1,...,r\right\}  \text{ with }%
\theta_{i}=\frac{j_{i}}{\bar{m}+1}.
\end{array}
\right.
\]

\ifthenelse{\boolean{bluecomments}}{\textcolor{blue}{
Recall if we want $q=\infty$ in
\[
\frac{1}{p_{i}}=\frac{j_{i}}{ap}=\frac{j}{n}+\theta\left(  \frac{1}{r}-\frac{\bar{m}}{n}\right)  +\frac{1-\theta}{q}\]
we solve by first choosing $\theta$ so that dimension $n$ does not appear,
namely
\begin{align*}
\frac{j_{i}}{ap}  &  =\frac{j}{n}+\frac{j_{i}}{\bar{m}}\left(  \frac{1}{r}-\frac{\bar{m}}{n}\right)  +\frac{1-\theta}{q}=\frac{j}{n}-\frac{\bar{m}}{n}\frac{j_{i}}{\bar{m}}+\frac{j_{i}}{\bar{m}}\frac{1}{r}\end{align*}
leaving us with
\[
\frac{j_{i}}{ap}=\frac{j_{i}}{\bar{m}}\frac{1}{r},\,\,\, r=\frac{ap}{\bar{m}}.
\]
}}{} Taking the product and then applying Young's inequality (for the same
$p,q$)
\begin{align}
\left\Vert D^{j_{1}}\xi\right\Vert _{p_{1}}...\left\Vert D^{j_{q}}%
\xi\right\Vert _{p_{r}}  &  \leq\left\Vert D^{\bar{m}}\xi\right\Vert
_{\frac{ap}{\bar{m}}}^{\frac{a}{\bar{m}}}\left\Vert D^{\bar{m}+1}%
\xi\right\Vert _{\frac{bq}{\bar{m}+1}}^{\frac{b}{\bar{m}+1}}\left\Vert
\xi\right\Vert _{\infty}^{r-\frac{a}{m}-\frac{b}{m+1}}\nonumber\\
&  \leq C(\varepsilon,p,q,r,\left\Vert \xi\right\Vert _{\infty})\left\Vert
D^{\bar{m}}\xi\right\Vert _{\frac{ap}{\bar{m}}}^{\frac{ap}{\bar{m}}%
}+\varepsilon\left\Vert D^{\bar{m}+1}\xi\right\Vert _{\frac{bq}{\bar{m}+1}%
}^{\frac{bq}{\bar{m}+1}}\nonumber\\
&  =C(\varepsilon,p,q,r,\left\Vert \xi\right\Vert _{\infty})\left\Vert
D^{\bar{m}}\xi\right\Vert _{2\frac{a(\bar{m}+1)}{\bar{m}(a+1)}}^{2\frac
{a(\bar{m}+1)}{\bar{m}(a+1)}}+\varepsilon\left\Vert D^{\bar{m}+1}%
\xi\right\Vert _{2}^{2} \label{i32}%
\end{align}
where in the last line we have made the choices
\[
q=\frac{2(\bar{m}+1)}{b},\text{ \ }p=\frac{2(\bar{m}+1)}{a+1}.
\]
Since $1\leq a\leq\bar{m}$ we have
\[
\frac{a(\bar{m}+1)}{\bar{m}(a+1)}\leq1
\]
and can use H\"{o}lder's and Young's inequalities to get
\begin{equation}
C(\varepsilon,p,q)\left\Vert D^{\bar{m}}\xi\right\Vert _{2\frac{a(\bar{m}%
+1)}{\bar{m}(a+1)}}^{2\frac{a(\bar{m}+1)}{\bar{m}(a+1)}}\leq C(a,\bar
{m})\left(  \int\left\vert D^{\bar{m}}\xi\right\vert ^{2}+\int\chi
_{\text{supp}\left(  f\right)  }\right)  \label{i33}%
\end{equation}
omitting the last term in the case $a=\bar{m}$. Chaining together (\ref{i31},
\ref{i32}, \ref{i33}) gives the result.

\end{proof}

\begin{lemma}
\label{interpolation} Let $f\in C^{\infty}(B_{4})$ and $r_{1}<r_{2}\leq4$.

\begin{enumerate}
\item If $j_{1}+...+j_{s}=m, $ then
\[
\int_{B_{r_{1}}}\left\vert D^{j_{i}}f\cdots D^{j_{s}}f\right\vert ^{2}\leq
C(m,r\,_{1},r_{2})\,\Vert f\Vert_{\infty}^{2s-2}\sum_{j=0}^{m}\int_{B_{r_{2}}%
}\left\vert D^{j}f\right\vert ^{2}.
\]

\item If $j_{1}+j_{2}+j_{3}+...+j_{s}+j^{\ast}=2\tilde{m} $ where
$j_{q}<\tilde{m}$ and $j^{\ast}\geq0,$ then
\[
\int_{B_{r_{1}}}\left\vert D^{j_{i}}f\cdots D^{j_{s}}f\right\vert \leq
C(\tilde{m},r_{1},r_{2})\,\left\Vert f\right\Vert _{\infty}^{s-\left(
2-j^{\ast}/2\tilde{m}\right)  }\left(  \frac{2\tilde{m}-j^{\ast}}{2\tilde{m}%
}\sum_{j=0}^{\tilde{m}}\int_{B_{r_{2}}}\left\vert D^{j}f\right\vert ^{2}%
+\frac{j^{\ast}}{2\tilde{m}}\left\Vert \chi_{\text{supp}\left(  f\right)
}\right\Vert _{p^{\ast}}^{2\tilde{m}/j^{\ast}}\right)  .
\]

\item If $j_{1}+...+j_{r}=2\bar{m}+1 $ and all $j_{i}\leq\bar{m},$ then for
$\varepsilon>0$
\[
\int_{B_{r_{1}}}\left\vert D^{j_{1}}f\ast D^{j_{2}}f...\ast D^{j_{r}%
}f\right\vert \leq\varepsilon\int_{B_{r_{2}}}\left\vert D^{\bar{m}+1}%
\xi\right\vert ^{2}+C(\varepsilon,r_{1},r_{2},\bar{m},\Vert f\Vert_{\infty
})\,\left(  \sum_{j=0}^{\tilde{m}}\int_{B_{r_{2}}}\left\vert D^{j}%
\xi\right\vert ^{2}+1\right)  .
\]

\end{enumerate}
\end{lemma}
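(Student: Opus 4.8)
The plan is to reduce each of the three ball-domain inequalities to its compactly supported counterpart in Lemma~\ref{Gagliardo-Nirenberg} by multiplying by a fixed cutoff. Fix once and for all a function $\chi\in C_c^\infty(B_{r_2})$ with $0\le\chi\le1$, $\chi\equiv1$ on $B_{r_1}$, and $|D^k\chi|\le C(k,r_1,r_2)$ for every $k$ up to the order needed; such a $\chi$ depends on $r_1,r_2$ but not on $f$. Then $\xi:=\chi f$ is a smooth compactly supported vector-valued function on $\mathbb{R}^n$, so Lemma~\ref{Gagliardo-Nirenberg} applies verbatim to $\xi$, and it remains only to compare the resulting quantities with the ones in the statement.

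For the left-hand sides, since $\chi\equiv1$ on $B_{r_1}$ we have $D^{j_i}\xi=D^{j_i}f$ throughout $B_{r_1}$, so $\int_{B_{r_1}}\bigl|D^{j_1}f\ast\cdots\ast D^{j_s}f\bigr|=\int_{B_{r_1}}\bigl|D^{j_1}\xi\ast\cdots\ast D^{j_s}\xi\bigr|\le\int_{\mathbb{R}^n}\bigl|D^{j_1}\xi\ast\cdots\ast D^{j_s}\xi\bigr|$ (and likewise for the squared integrands), the integrand being nonnegative. For the right-hand sides, $\|\xi\|_\infty\le\|f\|_\infty$, and by the Leibniz rule $D^m\xi=\sum_{a+b=m}\binom{m}{a}D^a\chi\,D^bf$ is supported in $B_{r_2}$ with each summand bounded by $C(m,r_1,r_2)\,|D^bf|$, whence $\int|D^m\xi|^2\le C(m,r_1,r_2)\sum_{j=0}^m\int_{B_{r_2}}|D^jf|^2$ and similarly $\int|D^{\tilde m}\xi|^2\le C\sum_{j=0}^{\tilde m}\int_{B_{r_2}}|D^jf|^2$. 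Also $\operatorname{supp}\xi\subseteq\operatorname{supp}f\subseteq B_4$, so $\chi_{\operatorname{supp}\xi}\le\chi_{\operatorname{supp}f}$ and $\int\chi_{\operatorname{supp}\xi}\le|B_4|$. Substituting these into parts (1) and (2) of Lemma~\ref{Gagliardo-Nirenberg} and relabeling constants (with $s$ playing the role of $q$) yields parts (1) and (2) here.

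Part (3) needs slightly more care and is the only real obstacle, because the right-hand side of Lemma~\ref{Gagliardo-Nirenberg}(3) contains the top-order term $\varepsilon\int|D^{\bar m+1}\xi|^2$, whereas the statement we want controls $\int_{B_{r_2}}|D^{\bar m+1}f|^2$. Expanding $D^{\bar m+1}\xi=\chi\,D^{\bar m+1}f+(\text{terms with at most }\bar m\text{ derivatives falling on }f)$ and using $0\le\chi\le1$ together with Young's inequality on the cross terms gives, for any $\delta>0$, $\int|D^{\bar m+1}\xi|^2\le(1+\delta)\int_{B_{r_2}}|D^{\bar m+1}f|^2+C(\delta,r_1,r_2)\sum_{j=0}^{\bar m}\int_{B_{r_2}}|D^jf|^2$. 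Choosing $\delta$ fixed and then renaming $\varepsilon(1+\delta)\mapsto\varepsilon$, the surviving top-order term keeps its small coefficient while the lower-order remainder (together with the $\int\chi_{\operatorname{supp}\xi}$ and $\|f\|_\infty$ factors handled as above) is absorbed into the $C(\varepsilon,\dots)$ term already present in Lemma~\ref{Gagliardo-Nirenberg}(3). Keeping track of which constant stays small through this absorption is the main point; everything else is the routine Leibniz bookkeeping described above.
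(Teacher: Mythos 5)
Your proposal is correct and follows essentially the same approach as the paper's proof: introduce a fixed cutoff $\chi$ (the paper calls it $\tilde\eta$) equal to $1$ on $B_{r_1}$ and supported in $B_{r_2}$, apply Lemma~\ref{Gagliardo-Nirenberg} to $\xi=\chi f$, and distribute derivatives on $\chi f$ via the Leibniz rule to return to $f$. The paper only spells out part (1) and declares (2) and (3) ``similar''; you supply the extra bookkeeping for (3), correctly noting that the $\varepsilon\int|D^{\bar m+1}\xi|^2$ term must be re-expanded and the coefficient renamed, which is the right way to fill that gap.
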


\begin{proof}
Set $\tilde{\eta}\in C_{0}^{\infty}(B_{3})$ that is 1 on $B_{r_{1}}$, 0 on
$B_{3}(0)\backslash B_{r_{2}}(0)$, $0\leq\tilde{\eta}\leq1$ and $\Vert
\tilde{\eta} \Vert_{C^{m}}\leq C(m)$. By Lemma \ref{Gagliardo-Nirenberg}
(second line below)
\begin{align*}
\int_{B_{r_{1}}}\left\vert D^{i_{1}}f\cdots D^{i_{s}}f\right\vert ^{2}  &
\leq\int_{B_{r_{2}}}\left\vert D^{i_{1}}(\tilde{\eta}f)\cdots D^{i_{s}}%
(\tilde{\eta}f)\right\vert ^{2}\\
&  \leq\Vert\tilde{\eta}f\Vert_{\infty}^{2s-2}\int_{B_{r_{2}}}\left\vert
D^{m}(\tilde{\eta}f)\right\vert ^{2}\\
&  \leq C(m,r_{1},r_{2},\Vert\tilde{\eta}\Vert_{C^{m}})\,\Vert f\Vert_{\infty
}^{2s-2}\sum_{j=0}^{m}\int_{B_{r_{2}}}\left\vert D^{j}f\right\vert ^{2}.
\end{align*}
The second and third inequalities in the statement of the Lemma follows by
applying the previous Lemma in a similar way.
\end{proof}

The following is simple but will be used repeatedly, so we explicitly note it.

\begin{lemma}
\label{used in sum}Suppose that $r_{1}<r_{2}.$ \ Then for $\tilde{\varepsilon
}>0$
\[
\int_{B_{r_{1}}}\left\vert D^{k+3}\varphi\right\vert ^{2}\leq\tilde
{\varepsilon}\int_{B_{r_{2}}}\left\vert D^{k+4}\varphi\right\vert
^{2}+C(\tilde{\varepsilon},r_{1},r_{2})\int_{B_{r_{2}}}\left\vert
D^{k+2}\varphi\right\vert ^{2}.
\]

\end{lemma}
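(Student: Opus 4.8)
The plan is to read Lemma~\ref{used in sum} as the standard interior interpolation estimate between three consecutive orders of (flat) derivatives, applied with base order $k+2$: writing $u := D^{k+2}\varphi$, regarded as a $\delta_0$-valued tensor (equivalently, a vector-valued function) on $B_{r_2} \subset \mathbb{R}^n$, we have $Du = D^{k+3}\varphi$ and $D^2 u = D^{k+4}\varphi$, and the assertion becomes
\[
\int_{B_{r_1}} |Du|^2 \le \tilde\varepsilon \int_{B_{r_2}} |D^2 u|^2 + C(\tilde\varepsilon, r_1, r_2)\int_{B_{r_2}} |u|^2 .
\]
I would prove this by one integration by parts against a cutoff, which is the cleanest self-contained route; alternatively it follows at once from the Gagliardo--Nirenberg inequality underlying Lemma~\ref{interpolation} applied to $\tilde\eta\, u$ with $j=1$, $m=2$, followed by Young's inequality on the resulting product $\|D^2 u\|_2^{1/2}\|u\|_2^{1/2}$.

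For the integration-by-parts argument, fix $\eta \in C_0^\infty(B_{r_2})$ with $0 \le \eta \le 1$, $\eta \equiv 1$ on $B_{r_1}$, and $|D\eta| \le C/(r_2 - r_1)$. Integrating by parts componentwise (no boundary term, since $\eta$ is compactly supported in $B_{r_2}$) gives
\[
\int \eta^2 |Du|^2 = -\int \eta^2\, u^\alpha\, \Delta u^\alpha - 2\int \eta\, u^\alpha\, D\eta \cdot Du^\alpha .
\]
I would then estimate the two terms on the right by Young's inequality: for any $\delta > 0$,
\[
\Bigl|\int \eta^2 u^\alpha \Delta u^\alpha\Bigr| \le \delta \int \eta^2 |\Delta u|^2 + \frac{1}{4\delta}\int \eta^2 |u|^2 \le n\delta \int_{B_{r_2}} |D^2 u|^2 + \frac{1}{4\delta}\int_{B_{r_2}} |u|^2 ,
\]
using $|\Delta u|^2 \le n\,|D^2 u|^2$, and
\[
\Bigl|2\int \eta\, u^\alpha\, D\eta \cdot Du^\alpha\Bigr| \le \tfrac12 \int \eta^2 |Du|^2 + 2\int |D\eta|^2 |u|^2 .
\]

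Absorbing $\tfrac12\int \eta^2 |Du|^2$ into the left side, bounding the left side below by $\int_{B_{r_1}} |Du|^2$ (where $\eta \equiv 1$), and enlarging every error integral to $B_{r_2}$ --- which only helps, since all integrands are nonnegative --- yields
\[
\int_{B_{r_1}} |Du|^2 \le 2n\delta \int_{B_{r_2}} |D^2 u|^2 + \Bigl(\frac{1}{2\delta} + \frac{C}{(r_2-r_1)^2}\Bigr)\int_{B_{r_2}} |u|^2 .
\]
Taking $\delta := \tilde\varepsilon/(2n)$ and translating back, $Du = D^{k+3}\varphi$, $D^2 u = D^{k+4}\varphi$, $u = D^{k+2}\varphi$, gives the statement with $C(\tilde\varepsilon, r_1, r_2) = n/\tilde\varepsilon + C/(r_2-r_1)^2$. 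There is no genuine obstacle in this lemma; the only things to watch are that the term fed back into the left-hand side carries a coefficient strictly below $1$ (here $\tfrac12$), and that the dependence of $C$ on $r_2 - r_1$ comes solely from $\|D\eta\|_\infty$ --- both routine.
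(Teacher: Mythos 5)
Your proof is correct and follows essentially the same route as the paper's: a single integration by parts against a cutoff $\tilde\eta$ supported in $B_{r_2}$ with $\tilde\eta\equiv 1$ on $B_{r_1}$, followed by Young's inequality and absorption of the $\tfrac12\int\tilde\eta^2|D^{k+3}\varphi|^2$ term into the left-hand side. The only cosmetic difference is that you route the top-order term through $\Delta u$ and invoke $|\Delta u|^2\le n|D^2u|^2$, whereas the paper integrates by parts directly from $D^{k+3}\varphi\ast D^{k+3}\varphi\,\tilde\eta^2$ to $D^{k+2}\varphi\ast(\tilde\eta^2 D^{k+4}\varphi+2\tilde\eta\,D\tilde\eta\,D^{k+3}\varphi)$; both give the same estimate.
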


\begin{proof}
For some $\tilde{\eta}=1$ on $B_{r_{1}}$ supported inside $B_{r_{2}}$
\begin{align*}
\int_{B_{r_{2}}}\left\vert D^{k+3}\varphi\right\vert ^{2}\tilde{\eta}^{2}  &
=-\int_{B_{r_{2}}}\ D^{k+2}\varphi\ast\left(  \tilde{\eta}^{2}D^{k+4}%
\varphi+2\tilde{\eta}D\tilde{\eta}D^{k+3}\varphi\right) \\
&  \leq\tilde{\varepsilon}\int_{B_{r_{2}}}\tilde{\eta}^{2}\left\vert
D^{k+4}\varphi\right\vert ^{2}+\frac{1}{\tilde{\varepsilon}}C\int_{B_{r_{2}}%
}\tilde{\eta}^{2}\left\vert D^{k+2}\varphi\right\vert ^{2}\\
&  +\frac{1}{2}\int_{B_{r_{2}}}\tilde{\eta}^{2}\left\vert D^{k+3}%
\varphi\right\vert ^{2}+C\int_{B_{r_{2}}}\left\vert D\tilde{\eta}\right\vert
^{2}\left\vert D^{k+2}\varphi\right\vert ^{2}.
\end{align*}
Thus
\[
\int_{B_{r_{1}}}\left\vert D^{k+3}\varphi\right\vert ^{2}\leq2\tilde
{\varepsilon}\int_{B_{r_{2}}}\tilde{\eta}^{2}\left\vert D^{k+4}\varphi
\right\vert ^{2}+\frac{2}{\tilde{\varepsilon}}C\int_{B_{r_{2}}}\left(
\tilde{\eta}^{2}+\left\vert D\tilde{\eta}\right\vert ^{2}\right)  \left\vert
D^{k+2}\varphi\right\vert ^{2}.
\]

\end{proof}

\subsubsection{Evolution inequalities\label{evo}}

\begin{proposition}
\label{elliptic} Let $\rho_{\alpha}^{2}\in C_{0}^{\infty}(B_{2}(0))$ defined
by (\ref{bump}). \ Working in Darboux charts, for $\varepsilon>0$ we have
\begin{align}
\int_{B_{2}}\frac{d}{dt}\left(  \left\vert D^{k+2}\varphi\right\vert _{g}%
^{2}dV_{g}\right)  \rho_{\alpha}^{2}  &  \leq-2\int_{B_{2}}\left\vert
D^{k+4}\varphi\right\vert _{g}^{2}\rho_{\alpha}^{2}dV_{g}+\varepsilon
\int_{B_{3}}|D^{k+4}\varphi|^{2}dV_{g}\label{ebound}\\
&  +C(k,\varepsilon,\left\Vert \varphi\right\Vert _{C^{3}})\left(  \sum
_{m=3}^{k+2}\int_{B_{3}}\left\vert D^{m}\varphi\right\vert _{g}^{2}%
dV_{g}+1\right)  .\nonumber
\end{align}

\end{proposition}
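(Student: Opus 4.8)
The plan is to work in one of the Darboux charts of Proposition \ref{JLScharts2}, where a piece of $L_t$ is the graph $x\mapsto(x,d\varphi(x,t))$ and the flow is the scalar equation \eqref{MM1}. Since $B_2$ and $\rho_\alpha$ are fixed in time, $\int_{B_2}\frac{d}{dt}\big(|D^{k+2}\varphi|_g^2\,dV_g\big)\rho_\alpha^2=\frac{d}{dt}\int_{B_2}|D^{k+2}\varphi|_g^2\,\rho_\alpha^2\,dV_g$, and differentiating distributes over the inverse-metric factors in $|D^{k+2}\varphi|_g^2$, over the two copies of $\varphi_{i_1\dots i_{k+2}}=D^{k+2}\varphi$, and over $dV_g$. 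Using $\partial_t\varphi_{i_1\dots i_{k+2}}=\partial_{i_1}\cdots\partial_{i_{k+2}}\varphi_t$, one gets a principal term $2\int_{B_2}\langle D^{k+2}\varphi,D^{k+2}\varphi_t\rangle_g\,\rho_\alpha^2\,dV_g$ plus ``metric'' terms carrying a factor $\partial_t g_{ij}$ or $\partial_t(dV_g)$; since $g$ and $dV_g$ are functions of $(x,D\varphi,D^2\varphi)$ while $\varphi_t$ is fourth order, each metric term is a coefficient bounded in terms of $\|\varphi\|_{C^3}$ and the chart geometry times a product of derivatives of $\varphi$ whose highest is $D^6\varphi$, appearing linearly.

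Then I substitute $D^{k+2}\varphi_t=-g^{ap}g^{ij}\partial_a\partial_p\partial_i\partial_j(D^{k+2}\varphi)-[D^{k+2},g^{ap}g^{ij}]\varphi_{apij}+D^{k+2}G$ into the principal term. Pairing the leading summand with $D^{k+2}\varphi$ and integrating by parts twice — taking one derivative out of $g^{ap}\partial_a\partial_p$ and one out of $g^{ij}\partial_i\partial_j$, so that the surviving integrand is $g^{ap}g^{ij}g^{MN}(\partial_p\partial_j\varphi_M)(\partial_a\partial_i\varphi_N)=|D^{k+4}\varphi|_g^2$ — produces exactly $-2\int_{B_2}|D^{k+4}\varphi|_g^2\,\rho_\alpha^2\,dV_g$, the only place where $D^{k+4}\varphi$ occurs quadratically with an $O(1)$ coefficient. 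Every other contribution — the integration-by-parts errors (a derivative on an inverse metric, on $\sqrt{\det g}$, or on $\rho_\alpha^2$), the commutator term, $D^{k+2}G$ (whose structure is governed by Remark \ref{remark1}), and the metric terms above — is a coefficient controlled by $\|\varphi\|_{C^3}$ and the chart times a product of $\varphi$-derivatives; because each derivative of such a coefficient raises the order of a single $\varphi$-factor by at most one, a bounded number of further integrations by parts against the cutoff brings every $\varphi$-factor to order $\le k+4$, with $D^{k+4}\varphi$ then occurring at most linearly in each term.

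It remains to absorb. A term with a linear $D^{k+4}\varphi$ factor is split by Cauchy--Schwarz into $\varepsilon\int_{B_3}|D^{k+4}\varphi|^2\,dV_g$ plus a multilinear integral in $D^{\le k+3}\varphi$; all such multilinear integrals are estimated by the Gagliardo--Nirenberg inequalities of Lemma \ref{interpolation}, applied with $D^3\varphi$ as the base function (so its sup norm is $\le\|\varphi\|_{C^3}$ and $D^{k+4}\varphi=D^{k+1}(D^3\varphi)$ plays the role of the top-order term there), together with Lemma \ref{used in sum} to trade any remaining $\int_{B_2}|D^{k+3}\varphi|^2$ for $\tilde\varepsilon\int_{B_3}|D^{k+4}\varphi|^2+C\int_{B_3}|D^{k+2}\varphi|^2$. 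Choosing the accumulated small parameters so that the total coefficient of $\int_{B_3}|D^{k+4}\varphi|^2$ is at most the prescribed $\varepsilon$, and collecting the rest into $C(k,\varepsilon,\|\varphi\|_{C^3})\big(\sum_{m=3}^{k+2}\int_{B_3}|D^m\varphi|_g^2\,dV_g+1\big)$ (the $1$ absorbing $|\operatorname{supp}\rho_\alpha|$ from the interpolation), yields \eqref{ebound}. The hard part will be bookkeeping rather than any single estimate: one must confirm that no $\varphi$-derivative of order exceeding $k+4$ survives the integrations by parts and that order $k+4$ occurs only linearly outside the one good term, which rests on $g$ being a function of $(x,D\varphi,D^2\varphi)$ only and on the explicit form of $G$ in Remark \ref{remark1}, while obtaining the sharp coefficient $-2$ forces the two integrations by parts in the leading term to be done in the order indicated.
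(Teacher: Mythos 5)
Your proposal is correct and follows essentially the same route as the paper: differentiate $|D^{k+2}\varphi|_g^2\,dV_g$, substitute the scalar PDE for $\varphi_t$, integrate by parts twice on the principal $(k+6)$th-order term (peeling one derivative from each of the two Laplacian-type contractions, which, as you correctly note, is needed to obtain $|D^{k+4}\varphi|_g^2$ rather than a traced quantity) to isolate $-2\int|D^{k+4}\varphi|_g^2\rho_\alpha^2\,dV_g$, then bound the remaining terms via Cauchy--Schwarz, the Gagliardo--Nirenberg interpolations of Lemma~\ref{interpolation} with base function $D^3\varphi$, and Lemma~\ref{used in sum}. Your explicit commutator decomposition of $D^{k+2}\varphi_t$ is a cosmetic reorganization of the paper's bookkeeping, which instead tracks the orders of $(g^{ap}g^{ij}\varphi_{apij})_{i_1\dots i_{k+2}}$ and $G_{i_1\dots i_{k+2}}$ directly; both ways lead to the same error terms and the same absorption scheme.
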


\begin{proof}
In a Darboux chart, express $dV_{g}=V_{g}dx$. We have
\begin{align}
\frac{d}{dt}\left(  \left\vert D^{k+2}\varphi\right\vert _{g}^{2}V_{g}\right)
=\,  &  2\left(  \partial_{t}\varphi_{i_{1}...i_{k+2}}\right)  \left(
\varphi_{j_{1}...j_{k+2}}g^{i_{1}j_{1}}g^{i_{2}j_{2}}...g^{i_{k+2}j_{k+2}%
}V_{g}\right) \nonumber\\
&  +\varphi_{i_{1}...i_{k+2}}\varphi_{j_{1}...j_{k+2}}\partial_{t}\left(
g^{i_{1}j_{1}}g^{i_{2}j_{2}}...g^{i_{k+2}j_{k+2}}V_{g}\right) \nonumber\\
=\,  &  -2(g^{kl}g^{pq}\varphi_{klpq}+G)_{i_{1}...i_{k+2}}\left(
\varphi_{j_{1}...j_{k+2}}g^{i_{1}j_{1}}...g^{i_{k+2}j_{k+2}}V_{g}\right)
\label{f113}\\
&  +\varphi_{i_{1}...i_{k+2}}\varphi_{j_{1}...j_{k+2}}\partial_{t}\left(
g^{i_{1}j_{1}}g^{i_{2}j_{2}}...g^{i_{k+2}j_{k+2}}V_{g}\right)  .\nonumber
\end{align}
We count the highest order of derivatives of $\varphi$ in $x^{1},...,x^{n}$
for each term below:

\begin{enumerate}
\item $g,g^{-1},V_{g}$ are of 2nd order

\item $\partial_{t}g,\partial_{t}g^{-1}$ and $\partial_{t}V_{g}=V_{g}%
g^{ij}\partial_{t}g_{ij}$ are of 6th order

\item $(g^{kl}g^{pq}\varphi_{klpq})_{i_{1}...i_{k+2}}$ is of $(k+6)$th order
and $G_{i_{1}...i_{k+2}}$ is of $(k+5)$th order.
\end{enumerate}

For the sake of notation, we will use

\begin{enumerate}
\item $P=P(x,D\varphi,D^{2}\varphi,D^{3}\varphi)$,

\item $Q=P(x,D\varphi,...,D^{k+3}\varphi)\ $to be described in (\ref{qanon})
and below.

\item Bounded second order quantities are absorbed and not explicitly stated
unless necessary (in particular, $dV_{g}$ will be dropped when not being differentiated.)
\end{enumerate}

Multiplying $\rho_{\alpha}^{2}$ to localize in a chart then integrate on $L$,
we may then perform:

\vspace{.2cm}

\noindent{(\textbf{A}}) Integration by parts twice the first term in
(\ref{f113}) leads to

%

\begin{equation}
\int_{B_{2}}(-2g^{kl}g^{pq}\varphi_{klpq})_{i_{1}...i_{k+2}}\left(
\varphi_{j_{1}...j_{k+2}}g^{i_{1}j_{1}}...g^{i_{k+2}j_{k+2}}V_{g}\right)
\rho_{\alpha}^{2}=-2\int_{B_{2}}\left\vert D^{k+4}\varphi\right\vert _{g}%
^{2}\rho_{\alpha}^{2}\,dV_{g}+I \label{Ipart}%
\end{equation}
where%
\[
I=\int_{B_{2}}D^{k+4}\varphi\ast\left(  D^{4}\varphi+P^{2})\ast D^{k+2}%
\varphi\rho_{\alpha}^{2}+D^{k+3}\varphi\ast\left(  P\ast\rho_{\alpha}%
^{2}+D\rho_{\alpha}^{2}\right)  +D^{k+2}\varphi\ast\left(  P\ast D\rho
_{\alpha}^{2}+D^{2}\rho_{\alpha}^{2}\right)  \right)  .
\]
To deal with the first term in $I$:%
\begin{equation}
\int_{B_{2}}D^{k+4}\varphi\ast\left(  D^{4}\varphi+P^{2}\right)  \ast
D^{k+2}\varphi\leq\varepsilon\int_{B_{2}}\rho_{\alpha}^{2}|D^{k+4}\varphi
|^{2}+C(\varepsilon)\int_{B_{2}}\rho_{\alpha}^{2}\left(  \left\vert
D^{k+2}\varphi\ast\left(  D^{4}\varphi+P^{2}\right)  \right\vert ^{2}\right)
. \label{I11m}%
\end{equation}
Lemma \ref{interpolation} with $m=k$, $f=$ $D^{3}\varphi$ and $r_{2}=5/2$
yields
\[
\int_{B_{2}}\left\vert D^{k+2}\varphi\ast D^{4}\varphi\right\vert ^{2}\leq
C(D^{3}\varphi)\sum_{j=0}^{m}\int_{B_{5/2}}\left\vert D^{j+3}\varphi
\right\vert ^{2}.
\]
Applying Lemma (\ref{used in sum}) to the highest order term provides a bound
of (\ref{I11m}) by the positive terms in (\ref{ebound}) noting also that%
\[
\int_{B_{2}}\left\vert D^{k+2}\varphi\ast P^{2}\right\vert ^{2}\leq
C(D^{3}\varphi)\int_{B_{2}}\left\vert D^{k+2}\varphi\right\vert ^{2}.
\]
Next
\begin{equation}
\int_{B_{2}}\left\vert (P\ast\rho_{\alpha}^{2}+D\rho_{\alpha}^{2}%
)D^{k+4}\varphi\ast D^{k+3}\varphi\right\vert \leq\varepsilon\int_{B_{2}}%
\rho_{\alpha}^{2}|D^{k+4}\varphi|^{2}+\frac{1}{\varepsilon}C(P)\int_{B_{2}%
}\left\vert D^{k+3}\varphi\right\vert ^{2} \label{I11}%
\end{equation}
recalling that $D\rho_{\alpha}$ is bounded by uniform constants and
$D^{2}\varphi.$ By Lemma \ref{used in sum} (choosing $\tilde{\varepsilon
}\approx c\varepsilon^{2}$), (\ref{I11}) is bounded by
\[
\varepsilon\int_{B_{2}}\rho_{\alpha}^{2}|D^{k+4}\varphi|^{2}+\varepsilon
\int_{B_{3}}\left\vert D^{k+4}\varphi\right\vert ^{2}+\frac{1}{\varepsilon
^{3}}C\int_{B_{3}}\left\vert D^{k+2}\varphi\right\vert ^{2}%
\]
which is of the correct form. Finally for $I$, using (\ref{rhobound}),
\[
\int_{B_{2}}\left\vert D^{k+4}\varphi\ast D^{k+2}\varphi\ast\left(  P\ast
D\rho_{\alpha}^{2}+D^{2}\rho_{\alpha}^{2}\right)  \right\vert \leq
\varepsilon\int_{B_{2}}\rho_{\alpha}^{2}|D^{k+4}\varphi|^{2}+C(\varepsilon
,P)\int_{B_{2}}\left\vert D^{k+2}\varphi\right\vert ^{2}.
\]

\vspace{.2cm}

\noindent(\textbf{B}) {Note that when applying the product rule successively
to }$G$, we will get

\begin{enumerate}
\item A single highest order term which is linear in the highest order with
coefficients involving at most order {$D^{3}\varphi.$}

\item Second to highest order terms that are linear in the second highest
order, may have a factor of {$D^{4}\varphi$, all other dependence of lower
order. }

\item Terms of lower order, which could be multilinearly dependent on various
lower orders.
\end{enumerate}

Thus
\begin{equation}
G_{i_{1}...i_{k+1}}=1\ast D^{k+4}\varphi+Q. \label{qanon}%
\end{equation}
with $Q$ having highest order $D^{k+3}\varphi.$ \ This is observed by
iterating the following expansion: \ Using $DG$ to denote a derivative in $x$
of the composition
\[
x\mapsto G(x,D\varphi(x),D^{2}\varphi(x),D^{3}\varphi(x))
\]
and $\bar{D}G$ to denote derivatives in all 4 arguments of $G,$ we have \
\[
DG=\bar{D}G\ast\left(  D^{4}\varphi+D^{3}\varphi+D^{2}\varphi+\phi\right)
\]
where $\phi$ is the term generated by $\bar{D}G/Dx.$ \ Continuing
\begin{align}
D^{2}G  &  =\bar{D}G\ast\left(  D^{5}\varphi+D^{4}\varphi+D^{3}\varphi
+D\phi\ast\left(  D^{4}\varphi+D^{3}\varphi+D^{2}\varphi\right)  \right)
\nonumber\\
&  +\bar{D}^{2}G\ast\left(  D^{4}\varphi+D^{3}\varphi+D^{2}\varphi
+\phi\right)  \ast\left(  D^{4}\varphi+D^{3}\varphi+D^{2}\varphi+\phi\right)
\nonumber\\
&  ...\nonumber\\
D^{k+1}G  &  =\bar{D}G\ast\left(  D^{k+4}\varphi+D^{k+3}\varphi+D^{k+2}%
\varphi+...\right) \label{q5}\\
&  +\bar{D}^{2}G\ast\left(  D^{k+3}\varphi+D^{k+2}\varphi+D^{k+1}%
\varphi+...\right)  \ast\left(  D^{4}\varphi+D^{3}\varphi+D^{2}\varphi
+\phi\right) \nonumber\\
&  +\bar{D}^{3}G\ast\left(  D^{k+2}\varphi+...\right)  \ast\left\{  \left(
D^{5}\varphi+...\right)  +\left(  D^{4}\varphi+...\right)  \ast\left(
D^{4}\varphi+...\right)  \right\} \nonumber\\
&  ...\nonumber
\end{align}
Now integrate by parts:%
\begin{align*}
\int_{B_{2}}  &  G_{i_{1}...i_{k+2}}\left(  \varphi_{j_{1}...j_{k+2}}%
g^{i_{1}j_{1}}...g^{i_{k+2}j_{k+2}}V_{g}\right)  \rho_{\alpha}^{2}%
=-\int_{B_{2}}G_{i_{1}...i_{k+1}}\partial_{{i_{k+2}}}\left[  \rho_{\alpha}%
^{2}\left(  \varphi_{j_{1}...j_{k+2}}g^{i_{1}j_{1}}...g^{i_{k+2}j_{k+2}}%
V_{g}\right)  \right] \\
=  &  \int_{B_{2}}\left(  D^{k+4}\varphi\ast D^{k+3}\varphi\right)
\rho_{\alpha}^{2}+\int_{B_{2}}(D\rho_{\alpha}^{2}+\rho_{\alpha}^{2}%
P)D^{k+4}\varphi\ast D^{k+2}\varphi\\
&  +\int_{B_{2}}\left(  Q\ast D^{k+3}\varphi\right)  \rho_{\alpha}^{2}%
+\int_{B_{2}}(D\rho_{\alpha}^{2}+\rho_{\alpha}^{2}P)Q\ast D^{k+2}\varphi.
\end{align*}
We use Peter-Paul's inequality we split into two types of terms:
\[
\varepsilon\int_{B_{2}}\left\vert D^{k+4}\varphi\right\vert ^{2}\,\rho
_{\alpha}^{2}+\varepsilon\int_{B_{2}}Q^{2}\rho_{\alpha}^{2}%
\]
and
\[
C(\varepsilon,D\rho_{\alpha})\int_{B_{2}}\left(  \left\vert D^{k+3}%
\varphi\right\vert ^{2}+\left\vert D^{k+2}\varphi\right\vert ^{2}\right)
(P^{2}+P+1)\rho_{\alpha}^{2}.
\]
First,
\[
\int_{B_{2}}\left\vert D^{k+3}\varphi\right\vert ^{2}\left(  P^{2}+P+1\right)
\rho_{\alpha}^{2}\leq C\left(  D^{3}\varphi\right)  \int_{B_{2}}\left\vert
D^{k+3}\varphi\right\vert ^{2}%
\]
is bounded by the argument in Lemma (\ref{used in sum}).

One can prove by induction, observing (\ref{qanon}) and (\ref{q5}), that for
each term in $Q,$ the total number of derivatives of $D^{3}\varphi$ that arise
will sum up to no more than $k+1$ (i.e. $D^{k-1}\varphi\ast D^{6}\varphi\ast
D^{5}\varphi=D^{3+k-4}\varphi\ast D^{3+3}\varphi\ast D^{3+2}\varphi$, $\ $here
$k-4+3+2=k+1.$) \ {Applying Lemma \ref{interpolation} for $f=D^{3}\varphi$
}$\ $and $m=k+1$ to each of the squared terms gives%
\[
\int_{B_{2}}Q^{2}dV_{g}\leq C\left(  \Vert D^{3}\varphi\Vert\right)
\sum_{i=0}^{k+1}\int_{B_{3}}|D^{i+3}\varphi|^{2}%
\]
thus $\varepsilon\int_{B_{2}}Q^{2}dV_{g}$ has the correct bound, by Lemma
\ref{used in sum}.

Finally we finish bounding the last term in (\ref{f113})
\[
\int\varphi_{i_{1}...i_{k+2}}\varphi_{j_{1}...j_{k+2}}\partial_{t}\left(
g^{i_{1}j_{1}}g^{i_{2}j_{2}}...g^{i_{k+2}j_{k+2}}V_{g}\right)  \rho_{\alpha
}^{2}=\int\left(  D^{k+2}\varphi\ast D^{k+2}\varphi\ast D^{6}\varphi\right)
\rho_{\alpha}^{2}.
\]
Apply Lemma \ref{interpolation} for $f=D^{3}\varphi$ and $\tilde{m}=k$
\[
\int_{B_{2}}\left(  D^{k+2}\varphi\ast D^{k+2}\varphi\ast D^{6}\varphi\right)
\rho_{\alpha}^{2}\leq\varepsilon\int_{B_{3}}\left\vert D^{k+4}\varphi
\right\vert ^{2}+C\left(  \varepsilon,k,D^{3}\varphi\right)  \sum_{j=3}%
^{k+3}\int_{B_{3}}\left\vert D^{j}\varphi\right\vert ^{2}%
\]
{ We may }then sweep away the $\int_{B_{2}}\left\vert D^{k+3}\varphi
\right\vert ^{2}$ term Lemma \ref{used in sum} (choosing $\tilde{\varepsilon
}\approx c\varepsilon^{2}$) to conclude the proof.
\end{proof}

\begin{proposition}
\label{otherterms} Let $\rho_{\alpha}^{2}\in C_{0}^{\infty}(B_{2}(0))$.
Considering the decomposition in Lemma \ref{A2B2}, for $\varepsilon>0$ we
have
\begin{align*}
\int_{B_{2}}\frac{d}{dt}\left(  \left(  \left\vert S_{k}\right\vert _{g}%
^{2}+2\langle D^{k+2}\varphi,S_{k}\rangle_{g}\right)  dV_{g}\right)
\rho_{\alpha}^{2}  &  \leq C(k,\varepsilon,D^{3}\varphi)\,\left(  \sum
_{j=3}^{k+2}\int_{B_{3}}\left\vert D^{j}\varphi\right\vert ^{2}+1\right)  & \\
&  +\varepsilon\int_{B_{3}}\left\vert D^{k+4}\varphi\right\vert ^{2}. &
\end{align*}

\end{proposition}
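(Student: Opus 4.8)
The plan is to mirror the proof of Proposition~\ref{elliptic}, working in one of the finitely many Darboux charts of Proposition~\ref{JLScharts2} (where $h$ is $C^1$-close to $\delta_0$, $D^2\varphi$ is small, the coefficient $G$ of \eqref{MM1} and all its derivatives are uniformly bounded by Remark~\ref{remark1}, and $\rho_\alpha^2$ obeys \eqref{rhobound}); write $dV_g=V_g\,dx$ and keep the notation $P=P(x,D\varphi,D^2\varphi,D^3\varphi)$ of Proposition~\ref{elliptic} for a controlled coefficient. Expanding
\[
\frac{d}{dt}\Big[\big(|S_k|_g^2+2\langle D^{k+2}\varphi,S_k\rangle_g\big)V_g\Big]
\]
by the Leibniz rule breaks it into three groups: (i) terms in which $\partial_t$ falls on the metric contractions or on $V_g$, each carrying a factor $\partial_t g\sim 1\ast D^6\varphi$ (modulo bounded second-order quantities) times $|S_k|^2$ or $\langle D^{k+2}\varphi,S_k\rangle$; (ii) the cross term $2\langle\partial_t D^{k+2}\varphi,S_k\rangle_g V_g$ with $\partial_t D^{k+2}\varphi=D^{k+2}\big(-g^{ap}g^{ij}\varphi_{ajip}+G\big)$ of top order $D^{k+6}\varphi$; and (iii) $2\langle D^{k+2}\varphi,\partial_t S_k\rangle_g V_g+2\langle\partial_t S_k,S_k\rangle_g V_g$, where by Lemma~\ref{A2B2} $S_k$ is assembled from $D^4\varphi,\dots,D^{k+1}\varphi$ with coefficients in $(x,D\varphi,D^2\varphi,D^3\varphi)$, so $\partial_t S_k$ has top order $D^{k+5}\varphi$. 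In each case we multiply by $\rho_\alpha^2$ and integrate over $B_2$.

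The terms in group (i) — and all the genuinely lower-order leftovers from (ii) and (iii) — contain no derivative of $\varphi$ above order $k+3$ and are multilinear in $D^{\ge 4}\varphi$, hence need no integration by parts, only interpolation. The essential bookkeeping (Lemma~\ref{A2B2}) is that the derivatives of $D^3\varphi$ occurring in any term of $S_k$ total at most $k-2$; thus, treating $f=D^3\varphi$ as the basic object, a typical term such as $D^6\varphi\ast D^{k+2}\varphi\ast S_k$ carries at most $3+(k-1)+(k-2)=2k$ derivatives of $f$. Such products are precisely those controlled by Lemma~\ref{interpolation} (part~(2) for the even-order ones, part~(3) for odd-order ones such as $D^6\varphi\ast|S_k|^2$), giving a bound $C(k,\varepsilon,\|\varphi\|_{C^3})\sum_{j=3}^{k+3}\int_{B_3}|D^{j}\varphi|^2$, in which the only borderline term $\int_{B_3}|D^{k+3}\varphi|^2$ is then absorbed by Lemma~\ref{used in sum} (with $\tilde\varepsilon\approx c\,\varepsilon^2$) into $\varepsilon\int_{B_3}|D^{k+4}\varphi|^2+C\int_{B_3}|D^{k+2}\varphi|^2$. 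In particular $\int|S_k|^2\le C\sum_{j=3}^{k+1}\int|D^{j}\varphi|^2$ by the same interpolation; the low cases $k=2,3$, in which $S_k$ depends only on $D^3\varphi$ and all totals are small, are checked directly in the same spirit.

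The heart of the matter is groups (ii) and (iii), where differentiation in $t$ pushes the order two beyond $D^{k+4}\varphi$. For (ii), integrate by parts twice in the leading piece $\int_{B_2}\big(-g^{ap}g^{ij}\varphi_{ajip}\big)_{i_1\cdots i_{k+2}}\ast S_k\,\rho_\alpha^2$, moving two $x$-derivatives onto $S_k\rho_\alpha^2$ and the bounded coefficients; since $DS_k$ is of order $D^{k+2}\varphi$ and $D^2S_k$ of order $D^{k+3}\varphi$, every resulting term has the schematic shape $D^{k+4}\varphi\ast(D^{k+3}\varphi\text{ or }D^{k+2}\varphi\text{ or }S_k)\ast P$, and a Peter--Paul split produces $\varepsilon\int_{B_2}\rho_\alpha^2|D^{k+4}\varphi|^2$ together with constant multiples of $\int_{B_2}|D^{k+3}\varphi|^2$, $\int_{B_2}|D^{k+2}\varphi|^2$ and $\int_{B_2}|S_k|^2$ — all admissible by the previous paragraph. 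The remaining pieces of $\partial_t D^{k+2}\varphi$, in which some of the $k+2$ derivatives hit the metric factors (producing $1\ast D^3\varphi\ast D^{k+5}\varphi$-type terms) or come from $D^{k+2}G$ (top order $D^{k+5}\varphi$ by the expansion \eqref{q5}), are reduced to order $D^{k+4}\varphi$ by a single integration by parts and then treated identically. Group (iii) is analogous: $\int_{B_2}D^{k+2}\varphi\ast D^{k+5}\varphi\ast P\,\rho_\alpha^2$ becomes, after one integration by parts, a sum of terms $D^{k+3}\varphi\ast D^{k+4}\varphi\ast(\cdots)$, handled by Peter--Paul together with Lemma~\ref{used in sum}.

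The main obstacle is exactly this accounting in groups (ii)--(iii): since the time derivative costs four orders, one must verify that after the integrations by parts no $\int|D^{k+3}\varphi|^2$ term survives with a coefficient too large to be swallowed by $\varepsilon\int|D^{k+4}\varphi|^2$ through Lemma~\ref{used in sum}, which forces the Peter--Paul constants to be tracked as fixed powers of $\varepsilon$, just as in the proof of Proposition~\ref{elliptic}. Collecting the contributions from the three groups then yields the stated inequality.
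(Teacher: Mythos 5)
Your proposal follows essentially the same route as the paper's proof: the same top-order terms ($D^{k+6}\varphi\ast S_k$ from the cross term, $D^{k+5}\varphi\ast S_k$ from $\partial_t S_k$) are identified, the same number of integrations by parts (twice and once, respectively) are performed, and the same tools — Peter--Paul, Lemma~\ref{interpolation} keyed to the derivative budget of $D^3\varphi$ from Lemma~\ref{A2B2}~(3), and Lemma~\ref{used in sum} to absorb $\int|D^{k+3}\varphi|^2$ — close the estimate. Your grouping by where $\partial_t$ falls (metric/volume, $D^{k+2}\varphi$, $S_k$) is a cosmetic reorganization of the paper's split into $\frac{d}{dt}(|S_k|^2\,dV_g)$ and $\frac{d}{dt}(2\langle D^{k+2}\varphi,S_k\rangle\,dV_g)$; the underlying computations are identical.
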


\begin{proof}
Recall that%
\begin{equation}
S_{k}=\left(  1+D^{3}\varphi\right)  \ast(D^{k+1}\varphi+D^{k}\varphi\ast
D^{4}\varphi+D^{k-1}\varphi\ast D^{4}\varphi\ast D^{4}\varphi+D^{k-1}%
\varphi\ast D^{5}\varphi+ ...) \label{Bexpr}%
\end{equation}
Differentiating with respect to $t$ generates product rule expansions with 4
orders of derivatives added to a factor in each term, that is (modulo lower
order geometrically controlled values like $V_{g})$
\begin{align}
\frac{d}{dt}\left\vert S_{k}\right\vert _{g}^{2}dV_{g}  &  =\left(
1+D^{3}\varphi\right)  \ast\left(  D^{k+5}\varphi+D^{k+4}\varphi\ast
D^{4}\varphi+D^{k}\varphi\ast D^{8}\varphi+...\right)  \ast S_{k}%
\label{bkexpand}\\
&  \left(  D^{6}\varphi+D^{7}\varphi\right)  \ast\left(  D^{k+1}\varphi
+D^{k}\varphi\ast D^{4}\varphi+D^{k-1}\varphi\ast D^{4}\varphi\ast
D^{4}\varphi+...\right)  \ast S_{k}.\nonumber
\end{align}

Integrate by parts:
\begin{align*}
\int_{B_{2}}  &  \left(  D^{k+5}\varphi\ast\left(  1+D^{3}\varphi\right)  \ast
S_{k}\right)  \rho_{\alpha}^{2} =\int_{B_{2}}D^{k+4}\varphi\ast\left(
D^{4}\varphi\ast S_{k}+\left(  1+D^{3}\varphi\right)  \ast DS_{k}\right)
\rho_{\alpha}^{2}\\
&  +\int_{B_{2}}\left(  D^{k+4}\varphi\ast\left(  1+D^{3}\varphi\right)  \ast
S_{k}\right)  \ast D\rho_{\alpha}^{2}\\
&  \leq\varepsilon\int_{B_{2}}\left\vert D^{k+4}\varphi\right\vert ^{2}%
\rho_{\alpha}^{2}+C(\varepsilon)\int_{B_{2}}\left(  \left\vert D^{4}%
\varphi\ast S_{k}\right\vert ^{2}+\left\vert D^{3}\varphi\ast DS_{k}%
\right\vert ^{2}\right)  \rho_{\alpha}^{2}\\
&  +C(\varepsilon)\int_{B_{2}}\left\vert \left(  1+D^{3}\varphi\right)  \ast
S_{k}\right\vert ^{2}\left\vert D\rho_{\alpha}\right\vert ^{2}.
\end{align*}
Now apply Lemma \ref{interpolation} with $f=D^{3}\varphi$ and $m=k-1$
\begin{equation}
\int_{B_{2}}\left(  \left\vert D^{4}\varphi\ast S_{k}\right\vert
^{2}+\left\vert D^{3}\varphi\ast DS_{k}\right\vert ^{2}\right)  \rho_{\alpha
}^{2}\leq C(k,D^{3}\varphi)\,\sum_{j=3}^{k+2}\int_{B_{3}}\left\vert
D^{j}\varphi\right\vert ^{2}. \label{b87}%
\end{equation}
Similarly using $\tilde{\eta}$ as in the proof of Lemma \ref{interpolation},
we have
\[
\int_{B_{2}}\left\vert \left(  1+D^{3}\varphi\right)  \ast S_{k}\right\vert
^{2}\left\vert D\rho_{\alpha}\right\vert ^{2}\leq C\left(  D\rho_{\alpha
},D^{3}\varphi\right)  \sum_{j=3}^{k+1}\int_{B_{3}}\left\vert D^{j}%
\varphi\right\vert ^{2}.
\]
Continuing with the terms in (\ref{bkexpand})
\[
\int_{B_{2}}\left(  1+D^{3}\varphi\right)  \ast\left(  D^{k+4}\varphi\ast
D^{4}\varphi\ast S_{k}\right)  \rho_{\alpha}^{2}\leq\varepsilon\int_{B_{2}%
}\left\vert D^{k+4}\varphi\right\vert ^{2}\rho_{\alpha}^{2}+C(\varepsilon
)\int_{B_{2}}\left\vert D^{3}\varphi\ast D^{4}\varphi\ast S_{k}\right\vert
^{2}\rho_{\alpha}^{2}%
\]
with the latter term enjoying the same bound as (\ref{b87}). The remaining
terms are of the form
\[
\int\left(  D^{3+j_{1}}\varphi\ast D^{3+j_{2}}\varphi\ast..\ast D^{3+j_{q}%
}\varphi\right)  \rho_{\alpha}^{2}%
\]
with $j_{1}+...+j_{q}\leq2k$ so
\[
\int_{B_{2}}\left(  D^{3+j_{1}}\varphi\ast D^{3+j_{2}}\varphi\ast..\ast
D^{3+j_{q}}\varphi\right)  \rho_{\alpha}^{2}\leq C(m,D^{3}\varphi)\,\left(
\sum_{j=3}^{k+3}\int_{B_{3}}\left\vert D^{j}\varphi\right\vert ^{2}+1\right)
\]
by Lemma \ref{interpolation} again. Applying Lemma \ref{used in sum} to
$\int_{B_{3}}\left\vert D^{k+3}\varphi\right\vert ^{2}$ completes the desired
bound for the integral of the (\ref{bkexpand}) terms.

Next
\begin{equation}
\int_{B_{2}}\frac{d}{dt}\left(  2\langle D^{k+2}\varphi,S_{k}\rangle_{g}%
dV_{g}\right)  \rho_{\alpha}^{2}dV_{g}=\int_{B_{2}}\left(  D^{k+6}\varphi\ast
S_{k}\right)  \rho_{\alpha}^{2}dV_{g}+\int_{B_{2}}D^{k+2}\varphi\ast\frac
{d}{dt}\left(  S_{k}\ast V\right)  \rho_{\alpha}^{2}. \label{exdf}%
\end{equation}
Integrating the first term by parts twice yields
\begin{align*}
\int_{B_{2}}  &  D^{k+6}\varphi\ast S_{k}\rho_{\alpha}^{2} =\int_{B_{2}%
}D^{k+4}\varphi\ast D^{2}\left(  S_{k}\ast V\right)  \rho_{\alpha}^{2}%
+D\rho_{\alpha}^{2}\ast D\left(  S_{k}\ast V\right)  +D^{2}\rho_{\alpha}%
^{2}\ast\left(  S_{k}\ast V\right) \\
&  \leq\varepsilon\int_{B_{2}}\left\vert D^{k+4}\varphi\right\vert ^{2}%
\rho_{\alpha}^{2}+C(\varepsilon)\int_{B_{2}}\left\vert D^{2}S_{k}\right\vert
^{2}\rho_{\alpha}^{2}+C\left(  \varepsilon,D^{2}\rho_{\alpha}^{2}\right)
\int_{B_{2}}\left(  \left\vert DS_{k}\right\vert ^{2}+\left\vert
S_{k}\right\vert ^{2}\right)  .
\end{align*}
Again Lemma \ref{interpolation} with $f=D^{3}\varphi$ and $\tilde{m}=k$ and
$r_{2}=5/2$
\begin{align}
\int_{B_{2}}\left\vert D^{2}S_{k}\right\vert ^{2}\rho_{\alpha}^{2}  &  \leq
C(k,D^{3}\varphi)\,\sum_{j=3}^{k+3}\int_{B_{5/2}}\left\vert D^{j}%
\varphi\right\vert ^{2}.\\
&  \leq\varepsilon\int_{B_{3}}\left\vert D^{k+4}\varphi\right\vert
^{2}+C\left(  \varepsilon,k,D^{3}\varphi\right)  \sum_{j=3}^{k+2}\int_{B_{3}%
}\left\vert D^{j}\varphi\right\vert ^{2}%
\end{align}
using Lemma \ref{used in sum}.

Now look at second term in (\ref{exdf}). Note that
\[
D^{k+2}\varphi\ast\frac{d}{dt}\left(  S_{k}\ast V\right)  =D^{k+2}\varphi\ast
D^{k+5}\varphi\ast D^{3}\varphi+D^{k+2}\varphi\ast\left(  D^{k+4}\varphi\ast
D^{4}\varphi+...\right)  .
\]
The highest order term can be dealt with via integration by parts away from
$D^{k+5}\varphi$ and then an iterated Peter-Paul, carefully choosing smaller
$\varepsilon$ and using Lemma \ref{interpolation}. For the remaining terms, we
need the third statement in Lemma \ref{interpolation} which gives
\begin{align*}
\int_{B_{2}}\left(  D^{3+j_{1}}\varphi\ast D^{3+j_{2}}\varphi\ast..\ast
D^{3+j_{q}}\varphi\right)  \rho_{\alpha}^{2}  &  \leq\varepsilon\int_{B_{5/2}
}\left\vert D^{k+4}\varphi\right\vert ^{2}\\
&  +C(\varepsilon,k,\frac{5}{2},\Vert f\Vert_{\infty})\,\left(  \sum_{j=0}%
^{k}\int_{B_{5/2}}\left\vert D^{3+j}\varphi\right\vert ^{2}+1\right)
\end{align*}
as $j_{1}+...+j_{q}=2k+1$. A final application of Lemma \ref{used in sum} to
$\int_{B_{5/2}}\left\vert D^{k+3}\varphi\right\vert ^{2}$ completes the proof.
\end{proof}

In Proposition \ref{elliptic} we have isolated `good' terms $-2\int_{B_{2}%
}\left\vert D^{k+4}\varphi\right\vert ^{2}\rho_{\alpha}^{2}. $ We would like
to use them to offset the `bad' terms of the form $\varepsilon\int_{B_{3}%
}\left\vert D^{k+4}\varphi\right\vert ^{2}dV_{g}$ that occur in Propositions
\ref{elliptic} and \ref{otherterms}. Because the expressions for
$D^{k+4}\varphi$ are different in each chart in the cover, the difficulty
arises that we cannot directly beat the terms occurring on a larger ball by
terms on a smaller ball of different charts, even when the smaller balls cover
the larger ball. To make an argument that the bad terms in a larger ball of
one chart are offset by the good terms in a smaller ball in a different chart
requires bounding the bad terms by a global, well-defined geometric quantity
involving derivatives of the second fundamental form, modulo a lower order
difference. This is the point of the following lemma.

\begin{lemma}
\label{enoughalready}Take a finite cover of charts $\Upsilon^{\alpha}$, each
over $B_{4}(0)$ and partition of unity $\rho_{\alpha}^{2}$ in $B_{2}(0)$ in
each respective chart as described by (\ref{bump}). Then
\[
\sum_{\alpha}\int_{B_{3}}\left\vert D^{k+4}\varphi\right\vert ^{2}dV_{g}%
\leq2N\sum_{m=0}^{k+1}\int_{L}\left\vert \nabla^{k+1-m}A\right\vert ^{2}%
dV_{g}+C.
\]

\end{lemma}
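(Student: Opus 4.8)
The plan is to trade the chart-dependent quantity $|D^{k+4}\varphi|^{2}$ for the intrinsic density $|\nabla^{k+1}A|^{2}\,dV_{g}$ plus terms of strictly lower order, so that summing over the (at most $N$) charts of Proposition \ref{JLScharts2} over-counts each intrinsic integral at most $N$ times. First I would invoke Lemma \ref{A2B2} with $k$ replaced by $k+2$: in any one of the Darboux charts this is the exact identity of tensors, written in the frame $\{e_{i}\}$ and normal frame $\{Je_{l}\}$,
\[
\nabla^{k+1}A=D^{k+4}\varphi+S_{k+2},
\]
so that $|D^{k+4}\varphi|_{g}^{2}\le 2|\nabla^{k+1}A|_{g}^{2}+2|S_{k+2}|_{g}^{2}$ pointwise. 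Integrating over $B_{3}$ in a chart $\Upsilon^{\alpha}$ and using the equivalences $dV_{g}\sim dx$ and $|\cdot|_{g}\sim|\cdot|_{\delta_{0}}$ recorded in the Localization subsection, the term $\int_{B_{3}}|\nabla^{k+1}A|^{2}\,dV_{g}$ is an intrinsic density integrated over the piece $\Omega_{\alpha}:=\{(x,d\varphi^{(\alpha)}(x)):x\in B_{3}^{n}(0)\}\subset L$, hence at most $\int_{L}|\nabla^{k+1}A|^{2}\,dV_{g}$; summed over the $N$ charts it contributes at most $N\int_{L}|\nabla^{k+1}A|^{2}\,dV_{g}$, which is the leading ($m=0$) term on the right-hand side of the lemma.

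It then remains to control $\sum_{\alpha}\int_{B_{3}}|S_{k+2}|^{2}\,dV_{g}$, and here I would use the structure of $S_{k+2}$ from Lemma \ref{A2B2}: it is a sum of multilinear forms in $D^{4}\varphi,\dots,D^{k+3}\varphi$ whose coefficients are uniformly controlled functions of $(x,D\varphi,D^{2}\varphi,D^{3}\varphi)$ — controlled because the slope bound \eqref{tangentspaceclose} keeps $D^{2}\varphi$ small, while $D^{3}\varphi$ equals $A$ minus the controlled term $S_{1}$ and so is bounded in terms of $K$ and the ambient geometry — and in which the total number of derivatives falling on $D^{3}\varphi$ in each term is at most $k$. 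Iterating Lemma \ref{A2B2} downward, i.e.\ replacing each factor $D^{a+3}\varphi$ by $\nabla^{a}A-S_{a+1}$, rewrites $S_{k+2}$ as a multilinear expression in $\nabla^{k}A,\dots,\nabla^{0}A=A$ together with the uniformly bounded data $x,D\varphi,D^{2}\varphi,D^{3}\varphi$, every term of total order $\le k$. Then the Gagliardo-Nirenberg interpolations of Lemmas \ref{Gagliardo-Nirenberg} and \ref{interpolation}, applied on a slightly larger ball $B_{r_{2}}$ with $3<r_{2}\le4$ and with $\|A\|_{\infty}\le K$, dominate $\int_{B_{3}}|S_{k+2}|^{2}\,dV_{g}$ by $C\sum_{m=0}^{k}\int_{B_{r_{2}}}|\nabla^{m}A|^{2}\,dV_{g}+C$; passing from $B_{r_{2}}$ back to $L$ as above and summing over the $N$ charts yields a bound of the asserted shape, with $C$ depending on $k,n,K$, the ambient geometry and $\operatorname{Vol}(L_{0})$ (the last through the bound \eqref{howmanycharts} on $N$). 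To land the precise prefactor $2N$ one absorbs the excess constants on the sub-leading orders by an interpolation inequality of the form $\int_{L}|\nabla^{m}A|^{2}\le\delta\int_{L}|\nabla^{k+1}A|^{2}+C(\delta)$ for $m\le k$ with $\delta$ small; alternatively, one simply retains the lower-order geometric integrals, which is exactly what the sum $\sum_{m=0}^{k+1}$ on the right-hand side accommodates. The cases $k=0,1$, where $S_{1},S_{2}$ are written out explicitly in Lemma \ref{A2B2}, are immediate.

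The step I expect to be the main obstacle is this last one. The summands of $S_{k+2}$ are \emph{not} intrinsic and contain middle-order derivatives $D^{4}\varphi,\dots,D^{k+2}\varphi$ entering multilinearly, so they cannot be dominated pointwise by $\sum_{m}|\nabla^{m}A|^{2}$; one must route them through the interpolation machinery of Lemma \ref{interpolation} to convert each product into an $L^{2}$-integral of a single $\nabla^{m}A$ with $m\le k$, and one must check carefully that the interpolation constants and the $K$-dependent coefficients end up inside the additive $C$ (and, for the clean multiplicative constant, that the sub-leading orders are traded against $\int_{L}|\nabla^{k+1}A|^{2}$) rather than contaminating the $2N$.
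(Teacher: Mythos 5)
Your proposal is essentially the paper's argument: split $D^{k+4}\varphi = \nabla^{k+1}A - S_{k+2}$ via Lemma~\ref{A2B2}, integrate the intrinsic part to $\int_L|\nabla^{k+1}A|^2$ with multiplicity at most $N$, and kill $S_{k+2}$ by interpolation plus downward iteration, the final chart-sum being controlled by the partition-of-unity bound $\sum_\alpha \tilde\eta_\alpha^2 \le N$. That is the right skeleton and you also correctly flag the bookkeeping as the nontrivial part.

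One point worth correcting in the mechanics: you propose to first substitute $D^{a+3}\varphi = \nabla^a A - S_{a+1}$ into $S_{k+2}$ to get a multilinear expression purely in $\nabla^m A$, and then interpolate. But Lemmas~\ref{Gagliardo-Nirenberg} and~\ref{interpolation} are statements about coordinate derivatives $D^j f$ of a scalar $f$ on a Euclidean ball; there is no version stated for products of covariant tensors $\nabla^m A$ on $L$, and you would have to convert back to $D^m\varphi$ to invoke them, which is circular. The paper avoids this by reversing the order: it first applies Lemma~\ref{interpolation} (with $f = D^3\varphi$) directly to $\int|S_{k+2}|^2$ to produce $C\sum_{m=3}^{k+3}\int_{B_{3+\iota}}|D^m\varphi|^2 + C$, and only then trades the single top term $\int|D^{k+3}\varphi|^2$ for $2\int|\nabla^k A|^2 + 2\int|S_{k+1}|^2$, iterating $k+1$ times on balls of radius $3 + j/(k+2)$ so the whole chain stays inside $B_4$. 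Your proposal reaches the same endpoint but the quantifier \emph{interpolate then convert} should be replaced by \emph{convert then interpolate}, one order at a time. Also, your concern about "landing the precise prefactor $2N$" by a separate absorption step is unnecessary: the factor $2$ is carried through each step of the iteration by the pointwise inequality $|D^{m+3}\varphi|^2 \le 2|\nabla^m A|^2 + 2|S_{m+1}|^2$, and in any case the Lemma is only invoked downstream with a generic constant $NC$, so nothing hinges on the exact value.
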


\begin{proof}
Note that from Lemma \ref{A2B2}
\begin{equation}
\left\vert D^{k+4}\varphi\right\vert _{g}^{2}\leq2\left\vert \nabla
^{k+1}A\right\vert _{g}^{2}+2\left\vert S_{k+2}\right\vert _{g}^{2}.\nonumber
\end{equation}
Let $\iota=\frac{1}{k+2}.$ Then we have, taking $\tilde{\eta}=1$ on each
$B_{3}(0),$ with $\tilde{\eta}\in C_{c}^{\infty}(B_{3+\iota}(0))$
\begin{align*}
\int_{B_{3}}\left\vert D^{k+4}\varphi\right\vert ^{2}dV_{g}  &  \leq
2\int_{B_{3}}\left(  \left\vert \nabla^{k+1}A\right\vert ^{2}+\left\vert
S_{k+2}\right\vert _{g}^{2}\right)  dV_{g}\\
&  \leq2\int_{B_{3}}\left\vert \nabla^{k+1}A\right\vert ^{2}dV_{g}%
+2\int_{B_{3+\iota}}\left\vert S_{k+2}\right\vert _{g}^{2}\tilde{\eta}%
^{2}dV_{g}\\
&  \leq2\int_{B_{3}}\left\vert \nabla^{k+1}A\right\vert ^{2}dV_{g}+C\left(
\sum_{m=3}^{k+3}\int_{B_{3+\iota}}\left\vert D^{m}\varphi\right\vert
^{2}dV_{g}+1\right)
\end{align*}
by Lemma \ref{interpolation}. Iterating this argument, using%
\[
\int_{B_{3+\iota}}\left\vert D^{k+3}\varphi\right\vert ^{2}dV_{g}\leq
2\int_{B_{3+\iota}}\left\vert \nabla^{k}A\right\vert ^{2}dV_{g}+C\left(
\sum_{m=3}^{k+2}\int_{B_{3+2\iota}}\left\vert D^{m}\varphi\right\vert
^{2}dV_{g}+1\right)
\]
and so forth, for a total of $k+1$ steps, we have by using $\left\vert
D^{3}\varphi\right\vert \leq$ $\left\vert A\right\vert +C$ that
\[
\int_{B_{3}}\left\vert D^{k+4}\varphi\right\vert ^{2}dV_{g}\leq2\sum
_{m=0}^{k+1}\int_{B_{3+\frac{k+1}{k+2}}}\left\vert \nabla^{k+1-m}A\right\vert
^{2}\tilde{\eta}^{2}dV_{g}+C.
\]

Now for any set of functions $\tilde{\eta}_{\alpha}$ who are $1$ on
$B_{r}\subset B_{4}$ on each chart $\Upsilon^{\alpha}$, we can bound
\begin{align*}
\sum_{\alpha}\int_{B_{4}}\left\vert \nabla^{m}A\right\vert ^{2}\tilde{\eta
}_{\alpha}^{2}dV_{g}  &  \leq\max_{x\in L}\left(  \sum_{\alpha}\tilde{\eta
}_{\alpha}^{2}(x)\right)  \int_{L}\left\vert \nabla^{m}A\right\vert ^{2}dV_{g}
\leq N\int_{L}\left\vert \nabla^{m}A\right\vert ^{2}dV_{g}.
\end{align*}
It follows that
\[
\sum_{\alpha}\int_{B_{3}}\left\vert D^{k+4}\varphi\right\vert ^{2}dV_{g}%
\leq2N\sum_{m=0}^{k+1}\int_{L}\left\vert \nabla^{k+1-m}A\right\vert ^{2}%
dV_{g}+C.
\]

\end{proof}


\subsection{Proof of the main theorem}

\begin{proof}
[Proof of Proposition \ref{cp11}]At a fixed time $t_{0}$ we may take the
ambient charts $\left\{  \Upsilon^{\alpha}\right\}  $ for a tubular
neighborhood of $L$ and subordinate partition of unity $\left\{  \rho_{\alpha
}^{2}\right\}  $ which restrict to charts (via the $x$ coordinate) for $L$
with the same partition of unity.

Differentiate
\begin{align*}
\frac{d}{dt}\int_{L}\left\vert \nabla^{k-1}A\right\vert _{g}^{2}dV_{g}  &
=\int_{L}\frac{d}{dt}\left(  \left\vert \nabla^{k-1}A\right\vert _{g}%
^{2}dV_{g}\right) \\
&  =\int_{B_{2}}\left(  \sum_{\alpha}\rho_{\alpha}^{2}\right)  \frac{d}%
{dt}\left[  \left(  \left\vert D^{k+2}\varphi\right\vert _{g}^{2}+\left\vert
S_{k}\right\vert _{g}^{2}+2\langle D^{k+2}\varphi,S_{k}\rangle_{g}\right)
dV_{g}\right] \\
&  =\sum_{\alpha}\int_{B_{2}}\frac{d}{dt}\left(  \left\vert D^{k+2}%
\varphi\right\vert _{g}^{2}dV_{g}\right)  \rho_{\alpha}^{2}\\
&  +\sum_{\alpha}\int_{B_{2}}\frac{d}{dt}\left[  \left(  \left\vert
S_{k}\right\vert _{g}^{2}+2\langle D^{k+2}\varphi,S_{k}\rangle_{g}\right)
dV_{g}\right]  \rho_{\alpha}^{2}.
\end{align*}
Thus
\begin{align}
\frac{d}{dt}\int_{L}\left\vert \nabla^{k-1}A\right\vert _{g}^{2}dV_{g}  &
\leq-2\sum_{\alpha}\int_{B_{2}}\left\vert D^{k+4}\varphi\right\vert _{g}%
^{2}\rho_{\alpha}^{2}dV_{g}+\varepsilon\sum_{\alpha}\int_{B_{3}}%
|D^{k+4}\varphi|^{2}dV_{g}\label{ROK}\\
&  +\sum_{\alpha}C(k,\varepsilon,\left\Vert \varphi\right\Vert _{C^{3}%
})\left(  \sum_{m=3}^{k+2}\int_{B_{3}}\left\vert D^{m}\varphi\right\vert
_{g}^{2}dV_{g}+1\right) \nonumber
\end{align}
by Propositions \ref{elliptic} and \ref{otherterms}. \ Now apply Lemma
\ref{enoughalready}%
\begin{align*}
\sum_{\alpha}\int_{B_{3}}|D^{k+4}\varphi|^{2}dV_{g}  &  \leq\left(
NC\sum_{m=0}^{k+1}\int_{L}|\nabla^{m}A|^{2}dV_{g}+C\right) \\
&  =NC\sum_{m=0}^{k+1}\int_{L}|\nabla^{m}A|^{2}\left(  \sum_{\alpha}%
\rho_{\alpha}^{2}\right)  dV_{g}+C\\
&  =NC\sum_{m=0}^{k+1}\sum_{\alpha}\int_{B_{2}}|\nabla^{m}A|^{2}\rho_{\alpha
}^{2}dV_{g}\\
&  \leq NC\sum_{m=0}^{k+1}\sum_{\alpha}\int_{B_{2}}2\left(  \left\vert
D^{m+3}\varphi\right\vert ^{2}+\left\vert S_{m+1}\right\vert _{g}^{2}\right)
\rho_{\alpha}^{2}dV_{g}\\
&  =2NC\sum_{\alpha}\int_{B_{2}}\left(  \left\vert D^{k+4}\varphi\right\vert
^{2}+\left\vert S_{k+2}\right\vert _{g}^{2}\right)  \rho_{\alpha}^{2}dV_{g}\\
&  +2NC\sum_{m=0}^{k}\sum_{\alpha}\int_{B_{2}}2\left(  \left\vert
D^{m+3}\varphi\right\vert ^{2}+\left\vert S_{m+1}\right\vert _{g}^{2}\right)
\rho_{\alpha}^{2}dV_{g}.
\end{align*}
Note that from Lemma \ref{used in sum}
\[
2\int_{B_{2}}\left\vert D^{k+3}\varphi\right\vert ^{2}\rho_{\alpha}^{2}%
dV_{g}\leq\frac{1}{4NC}\int_{B_{3}}|D^{k+4}\varphi|^{2}dV_{g}+C\left(
N,\left\vert D\rho_{\alpha}^{2}\right\vert \right)  \int_{B_{3}}%
|D^{k+2}\varphi|^{2}dV_{g}.
\]
Note also Lemma \ref{interpolation}, recalling (\ref{Bexpr}), then Lemma
\ref{used in sum} on the highest order resulting term gives
\begin{align*}
2\int_{B_{2}}\left\vert S_{k+2}\right\vert _{g}^{2}\rho_{\alpha}^{2}dV_{g}  &
\leq C\sum_{m=3}^{k+3}\int_{B_{5/2}}\left\vert D^{m}\varphi\right\vert
_{g}^{2}+C\\
&  \leq\frac{1}{4NC}\int_{B_{3}}|D^{k+4}\varphi|^{2}dV_{g}+C\left(
N,\left\vert D\rho_{\alpha}^{2}\right\vert \right)  \sum_{m=3}^{k+2}%
\int_{B_{3}}\left\vert D^{m}\varphi\right\vert _{g}^{2}+C.
\end{align*}
Thus
\begin{align}
\sum_{\alpha}\int_{B_{3}}|D^{k+4}\varphi|^{2}dV_{g}  &  \leq4NC\sum_{\alpha
}\int_{B_{2}}\left\vert D^{k+4}\varphi\right\vert ^{2}\rho_{\alpha}^{2}dV_{g}
+8NC\sum_{m=3}^{k+2}\sum_{\alpha}\int_{B_{2}}\left\vert D^{m}\varphi
\right\vert ^{2}\rho_{\alpha}^{2}dV_{g}\nonumber\label{ROK2}\\
&  +8NC\sum_{m=1}^{k+1}\sum_{\alpha}\int_{B_{2}}\left\vert S_{m}\right\vert
_{g}^{2}\rho_{\alpha}^{2}dV_{g}.
\end{align}
Choosing $\varepsilon<(2NC)^{-1}$ in (\ref{ROK}) in light of (\ref{ROK2}) we
have%
\[
\frac{d}{dt}\int_{L}\left\vert \nabla^{k-1}A\right\vert _{g}^{2}dV_{g}\leq
C(N,k,\left\Vert \varphi\right\Vert _{C^{3}})\left(  \sum_{m=3}^{k+2}%
\int_{B_{3}}\left\vert D^{m}\varphi\right\vert _{g}^{2}+\sum_{m=1}^{k+1}%
\int_{B_{3}}\left\vert S_{m}\right\vert _{g}^{2}+1\right)  .
\]
Applying Lemma \ref{interpolation} to the $\int\left\vert S_{m}\right\vert
_{g}^{2}$ terms and then Lemma \ref{enoughalready} to the $\left\vert
D^{m}\varphi\right\vert _{g}^{2}$ \ terms yields the result.
\end{proof}


\begin{proof}
[Proof of Theorem \ref{c1}]Suppose now that $F$ is a solution to (\ref{flow0})
with $\left\vert A\right\vert \leq K$ on $[0,T)$. Starting with
\[
\int_{L}\left\vert A\right\vert ^{2}dV_{g}(t)\leq K\operatorname{Vol}\left(
L\right)  \leq C
\]
we may apply Proposition \ref{cp11} and apply differential inequalities:
continuing with
\[
\frac{d}{dt}\int_{L}\left\vert \nabla A\right\vert ^{2}dV_{g}(t)\leq C\int%
_{L}\left\vert \nabla A\right\vert ^{2}dV_{g}(t)+C\int_{L}\left\vert
A\right\vert ^{2}dV_{g}(t)
\]
and so forth, obtaining bounds of the form
\begin{equation}
\int_{L}\left\vert \nabla^{k-1}A\right\vert ^{2}dV_{g}(t)\leq C(k,K,F_{0},T)
\label{wk2a}%
\end{equation}
for arbitrary $k$.

Now at any $t_{0}\in\lbrack0,T)$ we may take a cover $\Upsilon^{\alpha}$ as
described in Proposition \ref{JLScharts2}. By Lemma \ref{enoughalready} and
(\ref{wk2a}) we have%
\begin{equation}
\left\Vert D^{k}\varphi\right\Vert _{L^{2}(B_{3})}\leq C(k,K,F_{0},T)
\label{univE}%
\end{equation}
for all $k,$ in every chart. By Sobolev embedding theorems, we have H\"{o}lder
bounds on $D^{k}\varphi$ over $B_{2}$ for each chart. In particular, there
will be uniform bounds on $\frac{d}{dt}D\varphi$ and $\frac{d}{dt}D^{2}%
\varphi$ which control the speed of the flow in the chart and the rate of
change of the slope the manifold $L_{t}$ makes with respect to the tangent
plane at the origin in the chart. We conclude then the manifolds $L_{t}$ will
continue to be described by the set of charts taken at $t_{0}$ for
$t<\max\left\{  T,t_{0}+\tau\right\}  $ for some positive $\tau$ with an
apriori lower bound. (Perhaps we take $c_{n}$ slightly larger in
(\ref{tangentspaceclose})). By choosing $t_{0}$ near $T$ we are assured that
these fixed charts describe the flow for all values $t\in\lbrack t_{0},T)$.

Now observe that with fixed speed bounds, the paths $x\mapsto F(x,t)$ of the
normal flow are Lipschitz and hence the normal flow extends to a well-defined
continuous map
\begin{equation}
F:L\times\lbrack0,T]\rightarrow M. \label{contE}%
\end{equation}
We claim that $F\left(  \cdot,T\right)  $ is a smooth immersion. While within
a chart, the vertical maps
\begin{equation}
\bar{F}(x):=(x,d\varphi(x,t)) \label{mapE}%
\end{equation}
converge in every H\"{o}lder norm to a smooth map at $T,$ we still must argue
that the charts given by the $x$ coordinates do not collapse as $t\rightarrow
T$. This can be argued locally, using coordinates on $L_{t_{0}}$. For any
given $x\in L_{t_{0}}$ we may choose a chart such that $x\in B_{1}(0)\subset
B_{3}(0).$ We are already assuming $F$ is an immersion at $t_{0}$ so this
coordinate chart gives us a coordinate chart for the abstract smooth manifold
$L.$ For $t>t_{0}$ the normal flow $F$ is given by
\begin{equation}
F(x,t)=(\chi_{t}(x),d\varphi(\chi_{t}(x),t)) \label{smoothE}%
\end{equation}
for some local diffeomorphism $\chi_{t}(x):B_{1}(0)\rightarrow B_{2}(0)$ from
Claim \ref{moddif}, provided that $t_{0}$ is chosen close enough to $T$ such
that
\[
{\chi}_{t}(x)\in B_{2}(0)\text{ for all }x\in B_{1}(0)\text{ and }t\in\lbrack
t_{0},T).\text{ }%
\]
This choice of $t_{0}$ is possible given that $\chi_{t}(x)$ is controlled by
the normal projection of $\frac{d\bar{F}}{dt}$ and the inverse $( d\bar
{F})^{-1}$, for $\bar{F}$ defined by (\ref{mapE}), both of which are
universally controlled given (\ref{tangentspaceclose}) and (\ref{univE}).

Now because (\ref{mapE}) is uniformly smooth, it can be extended smoothly to
$[t_{0},T+\delta),$ as well as the normal flow associated to this extension.
Applying Claim \ref{moddif} (note that we may extend the flow outside $B_{3}$
in a nice way which doesn't affect the behavior in $B_{2}(0))$ we get a smooth
diffeomorphism $\chi_{T}.$ \ For $x\in B_{1}(0)$ we can compute the normal
flow $F$,
\[
F(x,T)=(\chi_{T}(x),d\varphi(\chi_{T}(x),T))
\]
which is a smooth extension of (\ref{smoothE}) to $T$, by the uniform
estimates on $\varphi$. Now $F(x,T)$ is a smooth immersion from $B_{1}(0)$
because $\chi_{T}$ is a diffeomorphism. As $x$ was chosen arbitrarily, we
conclude the continuous extension of $F$ defined in (\ref{contE}) must be a
smooth immersion from $L$ at $T$.

We may now restart the flow by Proposition \ref{stexist} with initial
immersion $F(x,T)$. The time derivatives of the new flow and $F$ agree to any
order at $T$. Therefore the new flow is a smooth extension of $F$ to
$[0,T+\varepsilon)$ for some $\varepsilon>0$. Moreover, Theorem \ref{thm31}
asserts that this is the only smooth extension.
\end{proof}

\section{Appendix}

\subsection{Submanifold with bounded second fundamental form $A$}

It is a known and frequently used fact that when $|A|$ is bounded then the
submanifold can be written as a graph over a controlled region in its tangent
space. We provide a proof below for any dimension and codimension.

\begin{proposition}
\label{A-bound} Let $L^{k}$ be a compact manifold embedded in a compact
Riemannian manifold $(M^{k+l},g)$. Suppose that the second fundamental form of
$L$ satisfies $|A|\leq K$ for some constant $K>0$. Then $L$ is locally a graph
of a vector-valued function over a ball $B_{r}(0)\subset T_{p}L$ in a normal
neighbourhood of $p\in L$ in $M$ and $r> C(M,g)(K+1)^{-1}$ for some constant
$C(M,g)>0$.
\end{proposition}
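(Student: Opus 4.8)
The plan is to work in ambient geodesic normal coordinates at $p$ and to control the rate at which the tangent planes of $L$ tilt away from $T_pL$, that rate being governed precisely by $|A|$ together with the fixed geometry of $M$. Fix $p\in L$ and let $\exp^M_p:T_pM\to M$ be the ambient exponential map. Since $M$ is compact, there are constants $\rho_0=\rho_0(M,g)>0$ and $C_0=C_0(M,g)$ so that $\exp^M_p$ restricts to a diffeomorphism of the ball $B_{\rho_0}(0)\subset T_pM$ onto a neighborhood of $p$, and in the resulting normal coordinates $\tfrac12\delta\le g\le 2\delta$ with ambient Christoffel symbols $|\tilde\Gamma^\gamma_{\alpha\beta}|\le C_0$. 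Identify $T_pM=T_pL\oplus N_pL=\mathbb{R}^k\times\mathbb{R}^l$, write points near $p$ as $(x,y)$, and let $\pi:\mathbb{R}^k\times\mathbb{R}^l\to\mathbb{R}^k$ be the projection. I want to show that the connected component $L'$ of $L\cap\exp^M_p(B_{\rho_0}(0))$ through $p$ is, in these coordinates, the graph $y=f(x)$ of a vector-valued function $f:B_r(0)\subset T_pL\to N_pL$ for some $r>C(M,g)(K+1)^{-1}$.

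Next I would prove the tilt estimate. Given $q\in L'$, let $\gamma:[0,\ell]\to L$ be a unit-speed geodesic of the induced metric joining $p$ to $q$ with $\ell=d_L(p,q)$. Then $\tilde\nabla_{\gamma'}\gamma'=A(\gamma',\gamma')$, whose norm is at most $K$; writing this in the normal coordinates, the Euclidean derivative $\tfrac{d}{ds}\gamma'(s)$ differs from $\tilde\nabla_{\gamma'}\gamma'$ only by Christoffel terms bounded by $C_0$, so $|\tfrac{d}{ds}\gamma'(s)|_\delta\le K+C_1$ with $C_1=C_1(M,g)$. Integrating and comparing the coordinate frame at $p$ with that at $q$ (shrinking $\rho_0$ if necessary so the coordinate distortion over a ball of this size is negligible), every tangent vector of $L$ at $q$ lies within Euclidean angle $\tfrac1{10}$ of $T_pL=\mathbb{R}^k\times\{0\}$ whenever $\ell\le\ell_0:=\bigl(c(K+C_1)\bigr)^{-1}$ for a universal $c$. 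In particular, on such a region $\pi|_{T_qL}$ is an isomorphism and the graph slope is small.

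I would then run a standard bootstrap/continuation argument. By the inverse function theorem, $L'$ is near $p$ the graph of a function $f$ with $f(0)=0$, $Df(0)=0$. Let $r^{*}$ be the supremum of radii $r$ for which $f$ extends to a graph over $B_r(0)$ with $|Df|\le\tfrac12$ there; this set of radii is open and closed in a suitable interval. On $B_{r^{*}}(0)$ the induced distance satisfies $d_L\bigl(p,(x,f(x))\bigr)\le 2|x|$, so if $r^{*}\le\tfrac18\ell_0$ the tilt estimate forces $|Df|\le\tfrac1{10}$ on $B_{r^{*}}(0)$, and then the inverse function theorem extends the graph with the same bound slightly past $r^{*}$, a contradiction. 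Thus the graph exists over $B_r(0)$ with $r:=\min\{\tfrac18\ell_0,\tfrac12\rho_0\}$; since $|f|\le|x|$ on this ball and $|x|<\rho_0$, the graph stays inside the normal coordinate chart throughout. As $\tfrac18\ell_0\ge \bigl(8c(K+C_1)\bigr)^{-1}$ and $\tfrac12\rho_0$ is a fixed constant, this gives $r>C(M,g)(K+1)^{-1}$ and the vector-valued graph $f$ required by the proposition.

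The step I expect to be the main obstacle is the continuation argument in the last paragraph: one must make the quantitative comparison $d_L(p,\cdot)\lesssim|x|$ valid on the a priori unknown extension region (it is circular unless set up as an open–closed statement tied to the slope bound), and one must be careful that "locally a graph" refers to the component $L'$ through $p$, using compactness of $L$ and the smallness of $\rho_0$ to guarantee that $L'$ does not leave the chart and that the graphical description is not spoiled by other sheets of $L$. The tilt estimate itself and the choice of $\rho_0,C_0,C_1$ are routine consequences of the compactness of $M$ and the definition of the second fundamental form.
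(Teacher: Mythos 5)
Your proof follows a genuinely different route from the paper's. The paper's Step~1 bounds the injectivity radius of $L$ from below (via a geodesic‑loop argument plus Klingenberg's estimate), so that it can work with a \emph{normal} neighbourhood of $p$ in $L$ where radial $L$-geodesics are unique, parallel transport a frame along them, and then patch the resulting local graphs over $T_pL$. You avoid the injectivity radius estimate entirely: you use a minimizing $L$-geodesic to any nearby point (Hopf–Rinow) to control tilt, and replace the patching of local graphs with an open–closed continuation argument for the single graph $f$ starting at $p$. Your observation that the comparison $d_L(p,(x,f(x)))\le 2|x|$ (valid once $|Df|\le 1/2$) lets the tilt estimate feed back into the slope bound is the right way to close the continuation, and restricting to the connected component $L'$ through $p$ handles the multi‑sheet issue you flag. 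If correct, this is a cleaner and more self‑contained argument than the paper's.

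There is, however, a genuine gap in the tilt estimate. From $\tilde\nabla_{\gamma'}\gamma'=A(\gamma',\gamma')$ you obtain $|\tfrac{d}{ds}\gamma'(s)|_\delta\le K+C_1$, which only controls the \emph{geodesic direction} $\gamma'(\ell)\in T_qL$, namely one vector. The conclusion that ``every tangent vector of $L$ at $q$ lies within Euclidean angle $\tfrac1{10}$ of $T_pL$'' does not follow from this: in dimension $k\ge 2$ the plane $T_qL$ could in principle rotate about $\gamma'(\ell)$ while $\gamma'(\ell)$ itself stays close to $T_pL$. You must control the entire tangent $k$-plane. The standard fix — which the paper carries out — is to parallel transport an orthonormal frame $e_1,\dots,e_k$ of $T_pL$ along $\gamma$ in the induced connection of $L$, so that $\nabla^g_{\gamma'}e_i=A(\gamma',e_i)+\nabla^L_{\gamma'}e_i=A(\gamma',e_i)$ has norm $\le K$, and then integrate $\tfrac{d}{ds}\langle e_i(s),\nu\rangle$ for each fixed normal direction $\nu$ at $p$. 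This works along any $L$-geodesic, including your minimizing one, so it slots directly into your continuation scheme; but as written the displayed computation does not deliver the plane estimate you invoke.
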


\proof {\bf Step 1.} Bound the injectivity radius of $L$ from below in terms
of $K$. Assume $M$ is isometrically embedded in some euclidean space. For the
embedding $F:L^{k} \overset{f}{\to} M^{k+l}\overset{\varphi}{\to}%
\mathbb{R}^{k+n}$, denote its second fundamental form by $\tilde{A}$ and note
that
\[
|\tilde{A}|\leq C(|A| +1)\leq C(K+1)
\]
where $C$ only depends on the isometric embedding $\varphi$.
Let $\gamma:\mathbb{S}^{1}\to L$ be a shortest geodesic loop based at a point
$p\in L$ which is parametrized by arc-length $s$. Suppose $\gamma
(0)=\gamma(a), \gamma^{\prime}(0)=\gamma^{\prime}(a)$. Take a hyperplane $P$
in $\mathbb{R}^{k+n}$ such that $P$ intersects $\gamma$ at a point $p$
orthogonally. There is a point $q\in\gamma$ where $\gamma$ meets $P$ again at
first time. The angle between the unit vectors $\gamma^{\prime}(p)$ and
$\gamma^{\prime}(q)$ in $\mathbb{R}^{k+n}$ is at least $\frac{\pi}{2}$.
Therefore
\[
\left|  \gamma^{\prime}(p)-\gamma(^{\prime}q)\right|  \geq\sqrt{2}.
\]
Since $F\circ\gamma:\mathbb{S}^{1}\overset{\gamma}{\to}{L}\overset{F}{\to
}\mathbb{R}^{k+l}$ factors through $L$ where $\gamma$ is a geodesic, we have
(cf. \cite{MR0164306}, \cite{MR0495450} for the notation of the second
fundamental form $\nabla d \phi$ of a mapping $\phi$ between Riemannian
manifolds),
\[
\nabla d (F\circ\gamma) = dF \circ\nabla(d\gamma) + \nabla d(F)(d\gamma
,d\gamma)=\nabla d(F)(d\gamma,d\gamma).
\]
Since the Christoffel symbols of $\mathbb{S}^{1}$ and of $\mathbb{R}^{n+k}$
are 0 we have
\[
\nabla d (F\circ\gamma) = (F\circ\gamma)^{\prime\prime}.
\]
Therefore
\[
(F\circ\gamma)^{\prime\prime}= \tilde A(F)(\gamma^{\prime},\gamma^{\prime}).
\]
Integrating along the portion of $\gamma$ from $p$ to $q$, we get
\[
\sqrt{2}\leq\left|  \gamma^{\prime}(p)-\gamma^{\prime}(q) \right|  \leq
\int^{q}_{p} \left|  (F\circ\gamma)^{\prime\prime}\right|  ds\leq C(K+1)\,a.
\]
We conclude that that the length $a$ has a lower bound $C/(K+1)$.

From the Gauss equations and $|\tilde A|<C(K+1)$, the sectional curvatures of
$L$ are bounded above by $C^{2}(K+1)^{2}$. We conclude $\mbox{inj}(L)\leq
C(K+1)^{-1}$ \cite[p.178]{PP2}.

\medskip

\textbf{Step 2.} Take a normal neighbourhood $U\subset L$ around a given point
$p\in L$ and assume $U$ is contained in a normal neighbourhood $V$ of $M$ at
$p$. We will use $C(g)$ for constants only depending on the ambient geometry
of $(M,g)$. Now, on $V$ we will use $\delta=\langle\cdot,\cdot\rangle
_{\mathbb{R}^{k+l}}$, to measure length of various geometric quantities
already defined in $(V,g)$. First,
\[
|A|_{\delta}\leq C(g) |A|_{g} \leq C(g)K.
\]

Identify $T_{p}L$ with $\mathbb{R}^{k}\times\{0\}\subset\mathbb{R}^{k+l}$. Let
$e_{1}(x),...,e_{k}(x)$ be the orthonormal frame on $U$ obtained by parallel
transporting an orthonormal frame $e_{1}(0),...,e_{k}(0)$ at $T_{p}L$ along
the \textit{unique} radial geodesic $r_{x}(s)$ in $(U,f^{\ast}g)$ from $0$ to
an arbitrary point $x\in L$, and let $e_{1+k}(0),...,e_{l+k}(0)$ be the
orthonormal frame of $(T_{p}L)^{\perp}$. Integrating along $\gamma_{x}(s)$
leads to
\begin{align*}
\left\vert \langle e_{i}(x),e_{j+l}(0)\rangle\right\vert  &  =\left\vert
\langle e_{i}(x),e_{j+l}(0)\rangle-\langle e_{i}(0),e_{j+l}(0)\rangle
\right\vert \\
&  =\left\vert \int_{0}^{|x|}\frac{d}{ds}\langle e_{i}(\gamma_{x}%
(s)),e_{j+l}(0)\rangle ds\right\vert \\
&  =\left\vert \int_{0}^{|x|}\langle e_{i}^{\prime}(s),e_{j+l}(0)\rangle
ds\right\vert \\
&  \leq\int_{0}^{|x|}\left\vert \langle\nabla_{\partial_{r}}^{g}e_{i}%
,e_{j+l}(0)\rangle\right\vert ds\\
&  =\int_{0}^{|x|}\left\vert \langle A(\partial_{r},e_{i})+\nabla
_{\partial_{r}}^{L}e_{i},e_{j+l}(0)\rangle\right\vert ds\\
&  \leq C(g)K\,|x|
\end{align*}
as $\nabla_{\partial_{r}}^{L}e_{i}=0$ on $L$. Therefore, there exists
$r_{0}=C(g)K^{-1}$ (where $C(g)$ may differ from the one above) such that for
any $x\in B_{r_{0}}(0)$ the projection of each $e_{i}(x)$ in each fixed normal
direction $e_{j+l}(0)$ is at most $c_{n}/\sqrt{l}$ and the norm of the
projection is no more than some universal constant $c_{n,l}$ that we get to
choose.
It is known that such $T_{x}L$ projects bijectively to $T_{p}L$. Therefore,
locally around any $x\in B_{r_{0}}(0)$, implicit function theorem asserts that
$U$ can be written as a graph over a ball in $T_{x}L$, hence as a graph over a
ball in $T_{p}L$ from the projection. The graphing functions over the fixed
reference plane $T_{P}L$ must coincide on the overlap of any pair of such
balls. This yields a \textit{global} graphing function $\mathcal{F}$ over
$B_{r_{0}}^{n}(p)\subset T_{p}L$. Moreover, $|D\mathcal{F}|\leq C(g,l)$
because $D\mathcal{F}$ is close to $T_{x}L$ which is close (measured in $l$)
to $T_{p}L$ via the projection. \endproof

\bibliographystyle{amsalpha}
\bibliography{hflow}

\end{document}